\documentclass[a4paper,11pt]{article}
\usepackage{amsfonts}
\usepackage{pifont}
\usepackage[title]{appendix}
\usepackage{caption}
\usepackage{graphicx, subfig}
\usepackage{bm}
\usepackage{latexsym,amsmath,amssymb,cite,amsthm}
\usepackage{color,eucal,enumerate,mathrsfs}
\usepackage[normalem]{ulem}
\usepackage{amsmath}
\usepackage[pagewise]{lineno}
\usepackage[pagebackref=true, colorlinks, linkcolor=blue,  anchorcolor=blue,citecolor=blue]{hyperref}
\usepackage{amsthm}

\numberwithin{equation}{section}
\theoremstyle{plain}
\newtheorem{theorem}{Theorem}[section]
\newtheorem{corollary}[theorem]{Corollary}
\newtheorem{lemma}[theorem]{Lemma}
\newtheorem{example}[theorem]{Example}
\newtheorem{proposition}[theorem]{Proposition}
\theoremstyle{definition}
\newtheorem{definition}[theorem]{Definition}

\theoremstyle{remark}

\newtheorem{remark}[theorem]{Remark}

\newcommand\bdf{\begin{definition}}
	\newcommand\bpr{\begin{proposition}}
		\newcommand\brk{\begin{remark}}
			\newcommand\blm{\begin{lemma}}
				\newcommand\bexe{\begin{exercise}}
					\newcommand\bexa{\begin{example}}
						\newcommand\beqn{\begin{eqnarray*}}
							\newcommand\edf{\end{definition}}
						\newcommand\epr{\end{proposition}}
					\newcommand\erk{\end{remark}}
				\newcommand\elm{\end{lemma}}
			\newcommand\eexe{\end{exercise}}
		\newcommand\eexa{\end{example}}
	\newcommand\eeqn{\end{eqnarray*}}

\newcommand{\mm}{\mathfrak m}
\newcommand{\ms}{(X,\d,\mm)}

\newcommand{\rcdkn}{{\rm RCD}(K, N)}
\newcommand{\rcd}{{\rm RCD}(K, \infty)}

\newcommand{\E}{\mathbb{E}}

\newcommand{\N}{\mathbb{N}}

\newcommand{\R}{\mathbb{R}}
\newcommand{\K}{\mathrm{K}}
\newcommand{\rS}{\mathrm{S}}

\newcommand{\diam}{\mathop{\rm diam}\nolimits} 

\newcommand{\Lip}{\mathop{\rm Lip}\nolimits}

\renewcommand{\d}{{\mathrm d}}

\newcommand{\D}{{\mathrm D}}
\newcommand{\restr}[1]{\lower3pt\hbox{$|_{#1}$}}

\newcommand{\nchi}{{\raise.3ex\hbox{$\chi$}}}
\newcommand{\weakto}{\rightharpoonup}
\newcommand{\limi}{\varliminf}
\newcommand{\lims}{\varlimsup}

\title{{\bf Stability of optimal transport on metric measure spaces}
}

\begin{document}
	\author{Bang-Xian Han\thanks{School of   Mathematics,  Shandong University,  Jinan, China.  Email: hanbx@sdu.edu.cn. }
		\and Zhuo-Nan Zhu
		\thanks{School of Mathematical Sciences, University of Science and Technology of China, Hefei, China.  Email: zhuonanzhu@mail.ustc.edu.cn}
	}

	\date{\today} 
	\maketitle
	
	\begin{abstract}
		We prove  a quantitative  stability of Kantorovich potentials on non-smooth metric measure spaces with synthetic lower Ricci curvature bound,  thereby confirming a recent conjecture of  Kitagawa,  Letrouit and M\'erigot.   Our proof,  which employs the heat kernel-regularized $c$-transform,   does not rely on linear structure or  sectional curvature bounds,   is new even in the smooth setting.  
		As a corollary,  we get a quantitative stability of optimal transport maps on Alexandrov spaces with  lower curvature bound.  
		
	\end{abstract}

	\textbf{Keywords}: optimal transport,  heat kernel,  Kantorovich potential,  quantitative stability,  metric measure space,  curvature-dimension condition,  Ricci curvature
	
\textbf{MSC 2020}: 53C23, 51F99, 49Q22
	\setcounter{tocdepth}{1}
	\tableofcontents

	\section{Introduction}
	\subsection{Background and Motivation}
	Optimal transport, initiated by Monge  \cite{monge1781} and reframed by  Kantorovich \cite{zbMATH03099866},  seeks the most efficient way to redistribute mass between two probability distributions.   Precisely, given two probability measures $\rho$ and $\mu$ defined on Polish spaces $X$ and $Y$ respectively, and a  cost function $c:X\times Y\rightarrow \R$, the Monge optimal transport problem aims to find a minimizer  $T$,   called an optimal transport map,   of the following optimization problem among all measurable maps $H: X \to Y$ pushing $\rho$ forward to $\mu$:
	\begin{equation}\label{2.1}
		\int_X c(x, T(x))  \,\d\rho(x)=\inf_{H_\sharp \rho = \mu}\int_X c(x, H(x))  \,\d\rho(x). \tag{MP}
	\end{equation}
 The   Monge problem is not always well-posed,   since it  prevents mass splitting.    Its relaxation,  called Kantorovich problem, optimizes over joint couplings instead of deterministic maps. By Kantorovich duality,   the Kantorovich problem is equivalent to the  following dual problem:
	\begin{equation}
		\sup_{ \phi(x) + \psi(y) \leq c(x, y)} \left\{ \int_X \phi(x) \, \d\rho(x) + \int_Y \psi(y) \, \d\mu(y)  \right\}.\tag{KD}
	\end{equation}
	The optimal functions $\phi,\psi$,  which always exist under standard assumptions,   are called  Kantorovich potentials \cite{AG-U, V-O}.  
	
	In the Euclidean space,   for $c(x,y)=\frac{1}{2}|x-y|^2$, Brenier's landmark result \cite{brenier1991polar} established that, for absolutely continuous source measures, the optimal transport map is unique and takes the form $T(x)=\nabla u(x)$ for a convex function $u$. This was later extended to Riemannian manifolds by McCann \cite{mccann2001polar}, who showed that for $c(x,y)=\frac{1}{2}\d^2(x,y)$, the optimal transport map is given by $T(x)=\exp_x(-\nabla\varphi(x))$ for a  Kantorovich potential $\varphi$.
	
	 A fundamental question in both theoretical and applied contexts of optimal transport is the quantitative stability of optimal transport maps and Kantorovich potentials under perturbations of the target measure (see Letrouit's lecture note \cite{LetrouitLecture} and the references therein).   Based on recent breakthroughs in the quantitative stability of optimal transport  on  Euclidean spaces \cite{zbMATH07794624, arXiv:2411.04908},    on boundaries of convex bodies \cite{Kitagawa12,  Kitagawa25}  and  on Riemannian manifolds  \cite{arXiv:2504.05412},     
Kitagawa--Letrouit--M\'erigot   \cite[\S 1.2]{arXiv:2504.05412} conjecture that:
	\begin{center}
		\emph{the quantitative stability results are also true in more general metric measure spaces with synthetic curvature bounds.}
	\end{center}

	\subsection{General Setting}
	We confirm the conjecture of Kitagawa--Letrouit--M\'erigot  in the  following setting.
	
\paragraph{A.  Metric measure spaces:}	An Alexandrov space is a  geodesic  space of finite Hausdorff dimension and of curvature bounded from below (cf.  \cite{zbMATH01626771}).   An ${\rm RCD}(K,N)$ space  is  a metric measure spaces verifying the synthetic Riemannian curvature-dimension condition  \cite{zbMATH05049052,zbMATH05578758, zbMATH06303881, G-O}.   An $n$-dimensional Alexandrov space with curvature bounded from below by $k$,  equipped with its $n$-Hausdorff measure,   is an ${\rm RCD}(k(n-1), n)$ space  \cite{zbMATH06032507,zbMATH05962558}.
		 After \cite{brenier1991polar, mccann2001polar},   Gigli--Rajala--Sturm \cite[Theorem 1.1]{ gigli2016optimal}  proved the existence and uniqueness of the optimal transport map on an $\rm{RCD}(K,N)$ space $(X,\d,\mm)$,  for the quadratic cost $c(x,y)=\frac{1}{2}\d^2(x,y)$ and the source measure $\rho\ll \mm$.

\paragraph{B.  Source measures:}	In  \cite[Theorem 1.9]{arXiv:2411.04908}, it has been shown that when the source measure $\rho$ is the uniform density on some non-John domain  $S\subset \R^n$, then no quantitative stability estimates of the form $$\|\phi_\mu-\phi_\nu\|_{L^2(\rho)}\leq C W_p^\alpha(\mu,\nu).$$    A bounded open subset $S$ of a metric space is called a \emph{John domain} if there is a distinguished point $x_0 \in S$ and a constant $\eta > 0$ such that, for every $x \in S$, there is a rectifiable curve $\gamma \colon [0, \ell(\gamma)] \to S$ parametrized by arc length, such that $\gamma(0) = x$, $\gamma(\ell(\gamma)) = x_0$, and
		\begin{equation*}
			\d(\gamma(t), S^c) \geq \eta t,~~~\forall t \in [0, \ell(\gamma)].
		\end{equation*}
	John domains  encompass many cases of interest, such as  bounded Lipschitz domains, bounded domains satisfying a cone condition and certain fractal domains (see \cite{zbMATH00858510} for more discussions).

	Moreover, examples found by Letrouit \cite{letrouit2025unstableoptimaltransportmaps} indicates that both unboundedness of the density of the source measure $\rho$,  and  the openness of    $S$ may cause  instability  of optimal transport maps.  So we assume that $a_1\mm\restr{S} \leq \rho\leq a_2\mm\restr{S}$ for  some John domain $S$ and   constants $a_1,a_2>0$.

	\paragraph{C.  Kantorovich potentials:}  It is also necessary to  (zero-mean) normalize the Kantorovich potential  $\phi$ from $\rho$  such that $\E_\rho(\phi)=\int_S\phi\,\d\rho=0$.  Together with the uniqueness of the optimal transport map, such  $\phi$ is  unique and thus makes sense to talk about its stability.

	\subsection{Main Results}
	Our main theorem concerns the quantitative  $L^1$ stability of Kantorovich potentials on ${\rm RCD}(K,N)$  spaces.  
	
	\begin{theorem}\label{thm0.1}
		Let $(X,\d,\mm)$ be an ${\rm RCD}(K,N)$ metric measure space. Let $S\subseteq X$ be a John domain and $Y\subseteq X$ be compact with $\mm(Y)>0$. Let $\rho\in \mathcal{P}(S)$  be with $a_1\mm\restr{S} \leq \rho\leq a_2\mm\restr{S}$ for some positive constants $a_1,a_2$. Then there exists a constant $C>0$, depending on $K,N,a_1,a_2, S, \diam(S\cup Y)$, such that for any $\mu,\nu\in\mathcal{P}(Y)$, 
		\begin{equation}
			\|\phi_\mu-\phi_\nu\|_{L^1(\rho)}\leq C W_1^\frac{1}{2}(\mu,\nu),
		\end{equation}
		where $\phi_\mu$ and $\phi_\nu$ are the Kantorovich potentials from $\rho$ to $\mu$ and $\rho$ to $\nu$ respectively.
	\end{theorem}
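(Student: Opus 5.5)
The plan is to follow the strategy of Delalande--Mérigot / Kitagawa--Letrouit--Mérigot, which compares potentials through the strong convexity of the dual (Kantorovich) functional. The one Euclidean ingredient there, convexity along linear interpolation of potentials, will be replaced by a heat kernel-regularized $c$-transform.

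Write $c=\frac12\d^2$. For $\psi\in C(Y)$ set
\[
\psi^c(x)=\min_{y\in Y}\bigl(c(x,y)-\psi(y)\bigr),
\]
and consider the dual functional
\[
\mathcal K(\psi)=\int_S\psi^c\,\d\rho+\int_Y\psi\,\d\mu .
\]
The Kantorovich potential is $\phi_\mu=\psi_\mu^c$ (normalized to zero $\rho$-mean), where $\psi_\mu$ maximizes $\mathcal K$.

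\textbf{Step 1: regularize.} For $\varepsilon>0$, replace the hard minimum by a soft one built from the heat kernel $p_\varepsilon$ of $(X,\d,\mm)$ restricted to $Y$:
\[
\psi^{c,\varepsilon}(x)=-\varepsilon\log\int_Y e^{\psi(y)/\varepsilon}\,p_\varepsilon(x,y)\,\d\mm(y).
\]
The point is that the Li--Yau/Gaussian heat kernel bounds on ${\rm RCD}(K,N)$ spaces give
\[
-\varepsilon\log p_\varepsilon(x,y)=\tfrac{\d^2(x,y)}{4}\cdot 2+O(\varepsilon\log(1/\varepsilon))\cdot C(K,N,\diam).
\]
Here I use Varadhan-type two-sided bounds $p_\varepsilon\approx \mm(B_{\sqrt\varepsilon})^{-1}e^{-\d^2/(4\varepsilon)\pm\ldots}$, with time rescaled so that the leading exponent is $c/\varepsilon$. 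Combining this with $\mm(Y)>0$ and the Laplace principle (using that $\psi^c$ is Lipschitz with constant $\diam(S\cup Y)$), I expect
\[
\|\psi^{c,\varepsilon}-\psi^c\|_\infty\le C\varepsilon\log(1/\varepsilon)
\]
uniformly over $c$-concave $\psi$. This step needs no linear structure; it only uses the heat kernel and measure doubling.

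\textbf{Step 2: strong concavity of the regularized functional.} The map $\psi\mapsto\psi^{c,\varepsilon}(x)$ is a negative log-partition function, so it is concave. Its second variation in direction $v$ is
\[
-\tfrac1\varepsilon\,\mathrm{Var}_{\pi^\varepsilon_x}(v),
\]
where $\pi^\varepsilon_x$ is the Gibbs probability on $Y$ induced by $x$. Integrate against $\rho$ and interpolate linearly between $\psi_0=\psi_\mu$ and $\psi_1=\psi_\nu$, which is legitimate since the interpolation is in the space of functions on $Y$, not on $X$. Using first-order optimality of $\psi_\mu,\psi_\nu$ for the unregularized $\mathcal K$, together with the Step 1 error, I aim for
\[
\int_0^1\!\!\int_S \tfrac1\varepsilon\mathrm{Var}_{\pi^\varepsilon_{x,t}}(\psi_1-\psi_0)\,\d\rho\,\d t\ \lesssim\ \int(\psi_1-\psi_0)\,\d(\mu-\nu)+C\varepsilon\log\tfrac1\varepsilon .
\]
Since $\psi_1-\psi_0$ is Lipschitz with constant $\lesssim\diam$, the first term on the right is $\le C\,W_1(\mu,\nu)$ by Kantorovich--Rubinstein duality.

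\textbf{Step 3: pass the variance bound to potentials on $S$.} Here I need a reverse inequality: the $\rho$-averaged conditional variance should control the oscillation of $\phi_\nu-\phi_\mu$ on $S$. One has
\[
\nabla_x\psi^{c,\varepsilon}=\mathbb E_{\pi^\varepsilon_x}[\text{gradient of } c(\cdot,y)],
\]
so differences of derivatives in $t$ are covariances. Following the Euclidean argument, I would bound
\[
\int_S|\nabla(\phi_\nu^\varepsilon-\phi_\mu^\varepsilon)|^2\,\d\rho
\]
by the averaged variance times a constant. This requires a Cauchy--Schwarz estimate for the covariance between $y\mapsto\d_x$-gradients and $\psi_1-\psi_0$ under $\pi^\varepsilon_x$, with the $1/\varepsilon$ factor absorbed by the concentration of $\pi^\varepsilon_x$ at scale $\sqrt\varepsilon$.

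Then, because $S$ is a John domain and $a_1\mm\restr S\le\rho\le a_2\mm\restr S$, the $(1,1)$-Poincaré inequality on ${\rm RCD}(K,N)$ spaces extends to $S$ (Hajłasz--Koskela). This converts the gradient bound into an $L^1(\rho)$ bound for mean-zero functions. Finally, optimize
\[
C\sqrt{W_1+\varepsilon\log(1/\varepsilon)}+C\varepsilon\log(1/\varepsilon)
\]
with $\varepsilon\sim W_1$. This gives $W_1^{1/2}$ up to a log, and the log should be removable by a finer Laplace estimate, or else I would settle for $\varepsilon$ slightly smaller.

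\textbf{Main obstacle.} I expect Step 3 to be the hard part: extracting a genuine Sobolev (gradient) control of the difference of regularized $c$-transforms on $S$ from the Gibbs variance. This must be done without exponential maps or sectional curvature, using only first-order calculus, the Bakry--Émery gradient estimate for $p_\varepsilon$ (which controls $|\nabla_x\log p_\varepsilon|$ by $\d/\varepsilon$ plus lower-order terms), and the Laplace asymptotics uniformly in $x\in S$.
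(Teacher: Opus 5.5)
Your overall architecture matches the paper's: a heat-kernel soft $c$-transform, the Hessian $-\frac1\varepsilon\int_S\mathrm{Var}_{\pi^\varepsilon_x}(v)\,\d\rho$, linear interpolation of potentials on the target side, a covariance/Cauchy--Schwarz step against $\nabla_x\log p_\varepsilon$, and a Poincar\'e inequality on the John domain. But Step 3, which carries the whole argument, fails as formulated. Concentration of $\pi^\varepsilon_x$ at scale $\sqrt\varepsilon$ is false for general targets. If $\mu=\frac12(\delta_{y_1}+\delta_{y_2})$, then for $x$ within $O(\varepsilon)$ of the interface between the two Laguerre cells, the Gibbs measure splits its mass between neighbourhoods of $y_1$ and $y_2$, so $\mathrm{Var}_{\pi^\varepsilon_x}(\nabla_x\log p_\varepsilon(x,\cdot))\sim\varepsilon^{-2}$ there. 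The gradient bound $|\nabla_x\log p_\varepsilon|\lesssim\d/\varepsilon$ also gives only $\varepsilon^{-2}$ pointwise (compare Lemma~\ref{p3.7}). Hence your $L^2$ estimate $\int_S|\nabla(\phi^\varepsilon_\nu-\phi^\varepsilon_\mu)|^2\,\d\rho\lesssim\int_0^1\!\int_S\varepsilon^{-1}\mathrm{Var}\,\d\rho\,\d s$ is not available, since via your Cauchy--Schwarz it would require a pointwise bound $\mathrm{Var}_{\pi^\varepsilon_x}(\nabla_x\log p_\varepsilon)\lesssim\varepsilon^{-1}$. Note also that if it held uniformly in $\varepsilon$, it would give in the limit $\|\nabla\phi_\mu-\nabla\phi_\nu\|^2_{L^2(\rho)}\lesssim W_1(\mu,\nu)$. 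That is far stronger than the $W_1^{1/6}$ map estimate the paper proves in Theorem~\ref{thm0.2} by a separate argument.

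What one can prove is an \emph{integrated} bound on balls, $\int_B\mathrm{Var}_{\mu^t_x}(\nabla_x\log p_{t/2}(x,\cdot))\,\d\rho\lesssim\mm(2B)/(r_Bt)$ (Lemma~\ref{prop3.8}). It comes from the identity of Lemma~\ref{Lemma3.6}, $\mathrm{Var}(\nabla_x\log p)=\Delta_x\log H_{t/2}(e^{\psi_*/t})-\E(\Delta_x\log p)$, tested against a cutoff. The first term is integrated by parts and controlled by $|\nabla\log H_{t/2}(e^{\psi_*/t})|\lesssim 1/t$; the second is controlled by the Li--Yau inequality. This is the missing idea, and it forces different bookkeeping. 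The paper never estimates the $x$-gradient of the difference of potentials. Instead, for each $s$ along $\phi_s=\psi_*+sv$, it bounds the $L^1(\rho)$-oscillation of $x\mapsto\frac{\d}{\d s}\Phi_t[\phi_s](x)=-\E_{\mu^t_x[\phi_s]}(v)$:
\begin{itemize}
\item Cauchy--Schwarz in $x$ on each ball gives $\int_B|\nabla_x\E_{\mu^t_x}(v)|\,\d\rho\le\sqrt{C\mm(2B)/(r_Bt)}\,\bigl(\int_B\mathrm{Var}_{\mu^t_x}(v)\,\d\rho\bigr)^{1/2}$.
\item The local $(1,1)$-Poincar\'e inequality contributes a factor $r_B$, which absorbs the $r_B^{-1/2}$ near $\partial S$.
\item Boman-chain gluing then gives Proposition~\ref{2.11}.
\item Cauchy--Schwarz in $s$ matches its $t^{-1/2}$ with the $1/t$ of the Hessian, yielding Proposition~\ref{0.30} uniformly in $t$.
\end{itemize}

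Step 1 is also flawed as set up. $Y$ is only compact with $\mm(Y)>0$, so integrating over $Y$ against $\mm$ misses the parts of $Y$ that carry no $\mm$-mass. Take $Y=\bar B(y_0,1)\cup\{y_1\}$ with $y_1\notin\bar B(y_0,1)$ and $\mu=\delta_{y_1}$. Your Gibbs marginals live on $\bar B(y_0,1)$ and cannot converge to $\mu$, and $\psi^{c,\varepsilon}$ tends to $\min_{\bar B(y_0,1)}(c(x,\cdot)-\psi)$, which is in general strictly larger than $\psi^c(x)$. The paper instead extends $\psi$ to $\psi_*=\bar\psi-\Lambda_\psi\d(\cdot,Y)$ and integrates over all of $X$. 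The steep penalty gives $c(x,\cdot)-\psi_*\ge\psi^c(x)$ on $X$, with equality possible only on $Y$. Every ball of $X$ has positive measure. Finally, $e^{\psi_*/t}$ is Lipschitz and integrable, so the integration by parts in $y$ used in Lemma~\ref{p3.7} produces no boundary term on $\partial Y$.

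No rate is needed either, and the rate $O(\varepsilon\log\frac1\varepsilon)$ does not follow from the Gaussian bounds available on ${\rm RCD}(K,N)$ spaces, whose exponents are $\d^2/((4\pm\epsilon)t)$ with $\epsilon$-dependent constants. Since Proposition~\ref{0.30} is uniform in $t$, one simply lets $t\to0$ using two qualitative limits: $\Phi_t[\psi_*]\to\psi^c$ (Lemma~\ref{l0.20}) and weak convergence of the Gibbs marginals to the targets (Lemma~\ref{l0.21}). These turn the right-hand side into $|\E_{\mu-\nu}(\varphi-\psi)|^{1/2}\le\bigl(2\,\diam(S\cup Y)\,W_1(\mu,\nu)\bigr)^{1/2}$, with no $\varepsilon$-optimization.
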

	
	In particular,  if $(S,  \d,  \mm)$  is a compact $\rcdkn$ space,    $S$ is surely a John domain.  So we have the following corollary.
	
	\begin{corollary}\label{cor}
		Let $(X,\d,\mm)$ be a compact ${\rm RCD}(K,N)$ metric measure space.  Let $\rho\in \mathcal{P}(X)$ be with $a_1\mm \leq \rho\leq a_2\mm$ for some positive constants $a_1,a_2$. Then the conclusion of Theorem \ref{thm0.1} holds.
	\end{corollary}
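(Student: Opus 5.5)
The corollary follows from Theorem \ref{thm0.1} once its hypotheses are checked with $S=Y=X$; nothing in the stability estimate itself needs to be redone. Take $S=X$ and $Y=X$. Since $X$ is compact, $Y$ is compact. Because $\mm$ has full support on the proper space $X$ and $\rho\le a_2\mm$ with $\rho$ a probability, we have $0<\mm(X)<\infty$, so $\mm(Y)>0$. The density bounds $a_1\mm\restr{S}\le\rho\le a_2\mm\restr{S}$ are exactly the assumption. Every $\mu,\nu\in\mathcal P(X)=\mathcal P(Y)$ is admissible, and $\diam(S\cup Y)=\diam X<\infty$. The only hypothesis that needs an argument is that $S=X$ is a John domain.

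To see that $X$ is a John domain, note that $X$ is open in itself and bounded, and $S^c=\emptyset$. With the convention $\d(\cdot,\emptyset)=+\infty$, the John inequality $\d(\gamma(t),S^c)\ge\eta t$ holds trivially for any curve. We still need rectifiable curves from every $x$ to a fixed $x_0$, parametrized by arc length. An ${\rm RCD}(K,N)$ space is a length space, and since it is compact it is geodesic by Hopf--Rinow. So we fix any $x_0\in X$ and take a unit-speed minimizing geodesic $\gamma$ from $x$ to $x_0$. Its length is $\d(x,x_0)\le\diam X$, so it is rectifiable. Any $\eta>0$ works, for instance $\eta=1$.

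If one prefers to avoid the empty-complement convention, one can instead view $X$ as an open bounded set satisfying the John condition with respect to itself, which gives the same conclusion. Either way the John constant is harmless: since $\eta$ can be taken to be $1$, the dependence of $C$ on $S$ in Theorem \ref{thm0.1} reduces to dependence on $X$ itself, through $\diam X$ and the measure data. Applying Theorem \ref{thm0.1} then gives $\|\phi_\mu-\phi_\nu\|_{L^1(\rho)}\le C\,W_1^{1/2}(\mu,\nu)$ for all $\mu,\nu\in\mathcal P(X)$.

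The main obstacle is purely definitional: making sure that $S=X$, whose complement is empty, is an admissible John domain in the sense used inside the proof of Theorem \ref{thm0.1}. I would check that this proof uses the John property only through a Poincaré-type inequality on $S$. For $S=X$ compact ${\rm RCD}(K,N)$, that inequality holds directly from the doubling and Poincaré properties of ${\rm RCD}(K,N)$ spaces, so the corollary holds in either reading.
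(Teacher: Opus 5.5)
Your proposal is correct and is essentially the paper's argument. The paper justifies the corollary in one line: a compact ${\rm RCD}(K,N)$ space is trivially a John domain in itself (empty complement, minimizing geodesics to a fixed base point), and Theorem \ref{thm0.1} then applies with $S=Y=X$. Your fallback also goes through, with one precision: in the proof the John property enters through the Boman chain covering (Kitagawa--Letrouit--M\'erigot's Proposition 3.7, used in Proposition \ref{2.11} and Lemma \ref{3.1}), not through a Poincar\'e inequality on $S$ as such, and for compact connected $X$ any finite cover by open balls of radius at most $1$ satisfies that chain condition directly, because intersecting balls overlap in a nonempty open set, which has positive $\rho$-measure since $\rho\ge a_1\mm$ and $\mm$ has full support.
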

		
	Adapting the  strategy of \cite{arXiv:2504.05412}, we can also prove the stability of optimal transport maps on Alexandrov spaces.
		\begin{theorem}\label{thm0.2}
		Let $(X,  \d)$ be an $n$-dimensional Alexandrov space with curvature bounded from below by $k$, $\mm$ be the $n$-Hausdorff measure. Under the same assumption for $S, Y$ and $\rho$ as in Theorem \ref{thm0.1}, and if $S$ additionally has finite perimeter, then there exists a constant $C>0$, depending on $k,n,a_1, a_2, \diam(S\cup Y),   S$, such that for any $\mu,\nu\in\mathcal{P}(Y)$, 
		\begin{equation}
			\int_S \d^2(T_\mu(x), T_\nu(x))\,\d \rho(x) \leq CW_1^{1/6}(\mu, \nu),
		\end{equation}
		where $T_\mu$ and $T_\nu$ are the optimal transport maps from $\rho$ to $\mu$ and $\rho$ to $\nu$ respectively.
	\end{theorem}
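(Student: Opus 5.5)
We follow the strategy of Kitagawa--Letrouit--M\'erigot, with Theorem \ref{thm0.1} supplying the potential stability. The reduction has two steps: first control $\int_S \d^2(T_\mu,T_\nu)\,\d\rho$ by a first-order quantity in the potentials, then control that quantity by $\|\phi_\mu-\phi_\nu\|_{L^1(\rho)}$ through an interpolation inequality.

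\textbf{Step 1 (strong convexity).} On an Alexandrov space with curvature $\ge k$, the maps satisfy $T_\mu(x)=\exp_x(-\nabla\phi_\mu(x))$ for $\rho$-a.e.\ $x$, in the sense of gradient exponential maps. Since $S\cup Y$ is bounded, the functions $\frac12\d^2(\cdot,y)$, $y\in Y$, are uniformly semiconcave on $S$, so $\phi_\mu,\phi_\nu$ are uniformly semiconcave and Lipschitz there. I would then aim for a pointwise comparison of the form
\[
\d^2(T_\mu(x),T_\nu(x))\le C\,\big|\nabla\phi_\mu(x)-\nabla\phi_\nu(x)\big|\qquad\text{or}\qquad \le C\,|\nabla(\phi_\mu-\phi_\nu)|(x),
\]
valid for $\rho$-a.e.\ $x$ with $C=C(k,n,\diam(S\cup Y))$. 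To prove it, use Toponogov comparison at $x$ for the two geodesics $x\to T_\mu(x)$ and $x\to T_\nu(x)$. Their initial directions lie in the tangent cone, which is a Euclidean cone over a $\mathrm{CAT}(1)$ space at $\rho$-a.e.\ point. Their lengths are bounded by the diameter. Lower curvature bounds give upper bounds on the distance between geodesic endpoints only in a Hölder way, so I accept the square-root loss this produces; the square root of a bounded quantity costs nothing beyond constants. Integrating gives $\int \d^2(T_\mu,T_\nu)\,\d\rho\le C\int_S|\nabla(\phi_\mu-\phi_\nu)|\,\d\rho$.

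\textbf{Step 2 (gradient interpolation).} Let $u=\phi_\mu-\phi_\nu$. It is Lipschitz with constant $L\le \diam(S\cup Y)$, and it is the difference of two $\lambda$-semiconcave functions. The target estimate is
\[
\int_S|\nabla u|\,\d\mm \le C\,\|u\|_{L^1(S)}^{1/3}.
\]
This exponent combines with Theorem \ref{thm0.1} as $(W_1^{1/2})^{1/3}=W_1^{1/6}$, which matches the statement. To prove the estimate, write $|\nabla u|^2\le L|\nabla u|$ and use the following integration by parts against a cutoff on the set where $u$ has sign:
\[
\int |\nabla u|^2\eta=-\int u\,\Delta\phi_\mu\,\eta+\int u\,\Delta\phi_\nu\,\eta-\int u\,\nabla u\cdot\nabla\eta .
\]
Each Laplacian $\Delta\phi_\mu$ is a Radon measure whose positive part is controlled by semiconcavity, $\Delta\phi\le n\lambda$ in the measure sense, and whose total variation on $S$ is controlled by the perimeter of $S$ together with Lipschitz bounds. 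This is where the finite-perimeter hypothesis enters: the boundary term $\int_{\partial S}u\,\partial_\nu\phi$ is bounded by $L\cdot \mathrm{Per}(S)\,\|u\|_\infty$. Interpolating $\|u\|_\infty\lesssim \|u\|_{L^1}^{1/(n+1)}$ would lose dimension, so instead I would use $\int|\nabla u|^2\le C\|u\|_{L^1}^{\theta}$ combined with Cauchy--Schwarz, $\int|\nabla u|\le C(\int|\nabla u|^2)^{1/2}$, and tune the cutoff in a boundary layer of width $\delta$ around $\partial S$. The layer contributes $O(\mathrm{Per}(S)\delta)$, and the interior contributes $O(\|u\|_{L^1}/\delta)$. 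Optimizing in $\delta$ gives the exponents.

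\textbf{Main obstacle.} The hardest step is Step 2 in the nonsmooth setting. It needs a Gauss--Green formula on $S$ for measure-valued Laplacians of semiconcave functions on Alexandrov spaces, together with the sign control $\Delta\phi\le C$. I would first try Petrunin's result that semiconcave functions on Alexandrov spaces have measure-valued Laplacians bounded above, and then apply the perimeter theory of BV functions on RCD spaces to handle the boundary term. The exponent bookkeeping ($1/2$ from Theorem \ref{thm0.1}, a square-root loss from Step 1, a further loss from Step 2) must be arranged to land at $1/6$. If my choices give a worse exponent, I will rebalance by applying the cutoff optimization directly to $\int \d^2(T_\mu,T_\nu)\,\d\rho$.
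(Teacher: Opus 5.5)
There is a genuine gap, and it starts with a false premise in Step 1. For curvature $\ge k$, the hinge form of Toponogov's theorem bounds the distance between the endpoints of two shortest paths from $x$ by the model-space hinge. That bound is Lipschitz in the initial vectors on bounded sets, so $\d(T_\mu(x),T_\nu(x))\le c\,|\nabla\phi_\mu(x)-\nabla\phi_\nu(x)|$ with no H\"older loss; this is the paper's \eqref{exp}.

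The loss you accept is not harmless. With $\d^2\lesssim|\nabla u|$ and Cauchy--Schwarz, reaching $W_1^{1/6}$ from $\|u\|_{L^1}\lesssim W_1^{1/2}$ requires $\int_S|\nabla u|^2\,\d\mm\le C\|u\|_{L^1}^{2/3}$, and this is false. Take $S=(0,1)^n\subset\R^n$ with $\rho$ uniform, and $\phi_\mu=\frac12|x|^2-L|x_1-\frac12|$, whose target is $\frac12(\delta_{Le_1}+\delta_{-Le_1})$. Let $\phi_\nu$ be the same with the kink replaced by a parabola on $|x_1-\frac12|<h$; its target is $\frac{1-h}{2}(\delta_{Le_1}+\delta_{-Le_1})$ plus mass $h$ spread uniformly on $[-Le_1,Le_1]$. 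Then $\|u\|_{L^1}\le\frac23Lh^2$ while $\int_S|\nabla u|^2=\frac23L^2h$, so no exponent $\theta>\frac12$ in $\int|\nabla u|^2\lesssim\|u\|_{L^1}^\theta$ is possible. Your own balance $\|u\|_{L^1}/\delta+\delta$ gives exactly $\theta=\frac12$, which ends at $W_1^{1/8}$, and ``rebalancing'' cannot fix this. With the Lipschitz bound you would only need $\int_S|\nabla u|^2\,\d\rho\lesssim\|u\|_{L^1(\rho)}^{1/3}$.

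Step 2 also does not deliver what it asserts. The Laplacian term $\int u\,\eta\,\d(\Delta\phi_\mu-\Delta\phi_\nu)$ pairs $u$ with the nonnegative measures $n\lambda\mm-\Delta\phi$. These generally have a singular part; in the example above it is $2L\,\mathcal H^{n-1}$ on the kink hyperplane. So the only bound you actually give is the $\|u\|_\infty$ one, and the claim that the interior costs $O(\|u\|_{L^1}/\delta)$ is unargued. The layer bound $O({\rm Per}(S)\delta)$ is a Minkowski-content estimate for $\{x\in S:\d(x,\partial S)<\delta\}$, which finite perimeter does not give. A Gauss--Green formula for measure-valued Laplacians on finite-perimeter subsets of an Alexandrov space is also not available off the shelf.

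The missing idea is the paper's one-dimensional reduction:
\begin{itemize}
\item Double $X$ so that the geodesic flow preserves the Liouville measure $\mm_{\rS}$. Then $\int_S|\nabla u|^2\,\d\rho$ becomes an integral over $\rS X$ of $\int|\frac{\d}{\d s}u(b_s)|^2\,\d s$ over the components of $\{s:b_s\in S\}$.
\item Along each geodesic, $\phi(b_s)-\zeta s^2$ is concave (Ohta). So Lemma \ref{l0.18} gives $\|u'\|_{L^2(I)}^2\lesssim\|u\|_{L^2(I)}^{2/3}$ on every component, independent of $|I|$ and $n$.
\item H\"older and flow invariance give $\int_S|\nabla u|^2\,\d\rho\lesssim(\int_{\rS X}\#I_S\,\d\mm_{\rS})^{2/3}\|u\|_{L^2(\rho)}^{2/3}$.
\item Finite perimeter enters only here, to bound the mean crossing number (Proposition \ref{prop3.5}).
\end{itemize}
Combined with $\|u\|_{L^2(\rho)}^2\le\|u\|_{L^\infty}\|u\|_{L^1(\rho)}\lesssim W_1^{1/2}$ and \eqref{exp}, this gives $W_1^{1/6}$.
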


	\subsection{Strategy: heat kernel-regularized \texorpdfstring{$c$}{c}-transform}

Motivated by   regularized $c$-transforms  using  Gibbs kernels $e^{-c(x, y)/\varepsilon}$ \cite{arXiv:2504.05412},  entropic optimal transport \cite{GTJEMS, GTT25}  and Varadhan's formula
\[
\lim_{t \to 0} -t \log p_{t/2}(x, y) = \frac{1}{2} \d^2(x, y)=c(x,y),
\]
 we make use of the following {heat kernel-regularized $c$-transform} \footnote{We are told by Luca Taminini that in  solving the Schr\"odinger equation with the Cole--Hopf transform,  a similar formula will occur.}:
\[
\Lip(X, \d) \ni \psi  \mapsto \Phi_t[\psi](x) = -t \log \int_X e^{\frac{\psi(y)}{t}} p_{t/2}(x, y) \, \d\mm(y).
\]
This approach  allows us to bypass the low regularity of Kantorovich potentials in the non-smooth setting. 

We define the heat kernel regularized Kantorovich functional as
	\begin{equation*}
		\K_t[\psi]:=\int_S \Phi_t[\psi]\,\d\rho.
	\end{equation*}
Similar to \cite{arXiv:2504.05412},  the key in our proof is the strong concavity of the functional $\K_t$.
To achieve this,  we  first derive a local strong concavity estimate using heat kernel estimate,    then globalize the estimate on the support of the source measure using a Boman chain argument for John domains.

Unlike the regularized $c$-transform used in \cite{arXiv:2504.05412},  the existence of the boundary of $Y$ may lead to the failure in our heat kernel regularization argument.  This possibility is ruled out by using the measure concentration property of the heat kernel and by making a careful choice of Lipschitz extension.


\medskip

	\noindent {\bf Organization.}  This paper is structured as follows. In Section~\ref{sec3}, we prove the quantitative stability of Kantorovich potentials on $\mathrm{RCD}(K,N)$ spaces. In Section \ref{sec4}, we establish the stability of optimal transport maps on Alexandrov spaces. The Appendix \ref{A} contains technical lemmas about Poincar\'e inequalities.

	\medskip

\noindent {\bf Acknowledgement.} The authors  thank Nicola Gigli, Jun Kitagawa, Nan Li,  Cyril Letrouit, Quentin M\'erigot, Luca Tamanini for  helpful discussions and  suggestions on the bibliography.

	\section{Stability of Kantorovich potentials}\label{sec3}

	\subsection{Heat kernel estimate}
	We begin by recalling the short-time asymptotic behaviour of the heat kernel on ${\rm RCD}$ spaces. It was first studied by Varadhan on Riemannian manifolds \cite{zbMATH03249197}, and is known as Varadhan formula today. 
	\begin{lemma}\label{lemma0.2}
		Let $p_t(x,y)$ be the heat kernel on an ${\rm RCD}(K,N)$ space $\ms$.  Then
		\begin{equation}
			\lim_{t\rightarrow 0}-t \log p_\frac{t}{2}(x,y)=\frac{1}{2}\d^2(x,y),\quad \text{uniformly in}  \quad x\in S,\, y\in Y.
		\end{equation}
	\end{lemma}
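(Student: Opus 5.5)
The plan is to derive the uniform Varadhan formula from the two-sided Gaussian heat kernel bounds on ${\rm RCD}(K,N)$ spaces (Jiang--Li--Zhang, building on Sturm's work for spaces with doubling and Poincaré). For every $\epsilon>0$ there are constants $C_1,C_2>0$, depending on $K,N,\epsilon$, such that for all $x,y\in X$ and $0<s\le 1$,
\begin{equation*}
\frac{1}{C_1\,\mm(B_{\sqrt s}(x))}\exp\Big(-\frac{\d^2(x,y)}{(4-\epsilon)s}-C_2 s\Big)\le p_s(x,y)\le \frac{C_1}{\mm(B_{\sqrt s}(x))}\exp\Big(-\frac{\d^2(x,y)}{(4+\epsilon)s}+C_2 s\Big).
\end{equation*}
I will set $s=t/2$, take $-t\log$ of both sides, and read off the limit.

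The exponential parts produce
\begin{equation*}
-t\log e^{-\d^2/((4\pm\epsilon)t/2)}=\frac{2\d^2(x,y)}{4\pm\epsilon},
\end{equation*}
which tends to $\tfrac12\d^2(x,y)$ as $\epsilon\to0$. The discrepancy is at most $C\epsilon\,\d^2(x,y)\le C\epsilon\,\diam(S\cup Y)^2$, so it is uniform over $x\in S$, $y\in Y$. The factor $e^{\pm C_2 t/2}$ contributes $O(t^2)$ after multiplying by $t$.

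The main obstacle is the volume prefactor, where I need to show
\begin{equation*}
t\,\big|\log \mm(B_{\sqrt{t/2}}(x))\big|\to0 \quad\text{uniformly in } x\in S.
\end{equation*}
I will use Bishop--Gromov volume comparison. Fix $x_0\in S$ and $R=\diam(S\cup Y)+1$. The balls $B_R(x)$ for $x\in S$ all contain $B_1(x_0)$, whose measure is positive, and are contained in $B_{2R}(x_0)$, whose measure is finite. So $\mm(B_R(x))$ is bounded above and below uniformly in $x\in S$. Bishop--Gromov then gives, for $r\le R$,
\begin{equation*}
c\,(r/R)^N\le \mm(B_r(x))/\mm(B_R(x))\le 1,
\end{equation*}
with $c=c(K,N,R)$. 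Hence $|\log\mm(B_{\sqrt{t/2}}(x))|\le C(1+|\log t|)$ uniformly in $x\in S$, and $t\log t\to0$ finishes this term. The constant $C_1$ contributes $t\log C_1\to0$.

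To conclude, I first fix $\epsilon$ and let $t\to0$, which gives
\begin{equation*}
\limsup_{t\to0}\ \sup_{x\in S,\,y\in Y}\Big|-t\log p_{t/2}(x,y)-\tfrac12\d^2(x,y)\Big|\le C\epsilon\,\diam(S\cup Y)^2.
\end{equation*}
Then I let $\epsilon\to0$. The only inputs are the $\epsilon$-sharp Gaussian bounds, which need the RCD structure (Li--Yau type estimates), and uniform volume control on the bounded set $S\cup Y$.
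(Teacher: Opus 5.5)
Your proposal is correct and follows essentially the same route as the paper: apply the $\epsilon$-sharp two-sided Gaussian bounds of Jiang--Li--Zhang at time $t/2$, take $-t\log$, show via Bishop--Gromov that the volume prefactor contributes $o(1)$ uniformly, and let $\epsilon\to0$. The only differences are cosmetic. You center the ball at $x$ rather than $y$, which is harmless by symmetry of the heat kernel. You also spell out two points the paper leaves implicit: the uniform Bishop--Gromov volume bounds, and the $C\epsilon\,\diam(S\cup Y)^2$ error bound that makes the $\epsilon\to0$ step uniform.
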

	
	\begin{proof}
		By the heat kernel estimate  \cite[Theorem 1.2] {zbMATH06580546}, for any $\epsilon>0$,  it holds
		\begin{equation}\label{hk0}
			\begin{aligned}
				\frac{1}{C_1(\epsilon)\mm\left(B\big(y, \sqrt{\frac{t}{2}}\big)\right)} &\exp\left(-\frac{\d^2(x, y)}{(4 - \epsilon)t}-C_2(\epsilon)t\right) \\
				\leq p_t(x, y)
				 \leq &
				\frac{C_1(\epsilon)}{\mm\left(B\big(y, \sqrt{\frac{t}{2}}\big)\right)} \exp\left(-\frac{\d^2(x, y)}{(4 + \epsilon)t}+C_2(\epsilon)t\right).
			\end{aligned}
		\end{equation}
		Then we have 
		\begin{equation}\label{hk1}
			\begin{aligned}
				-t\log &\frac{C_1(\epsilon)}{\mm\left(B\big(y, \sqrt{\frac{t}{2}}\big)\right)}+\frac{2\d^2(x, y)}{4+\epsilon}-\frac{C_2(\epsilon)}{2}t^2\\
				\leq &-t \log p_\frac{t}{2}(x,y)
				\leq  t\log C_1(\epsilon)\mm\left(B\big(y, \sqrt{\frac{t}{2}}\big)\right)+\frac{2\d^2(x, y)}{4-\epsilon}+\frac{C_2(\epsilon)}{2}t^2.
			\end{aligned}
		\end{equation}
		
		By Bishop--Gromov inequality \cite[Theorem 2.3]{zbMATH05049052}, we have
		\begin{equation}
		\lim_{t\rightarrow0}t\log \mm\left(B\big(y, \sqrt{\frac{t}{2}}\big)\right)=0,\quad \text{uniformly in } \, y\in Y.
		\end{equation}
Letting $t\rightarrow0$   in \eqref{hk1}, we get the following uniform estimate:
		\begin{equation}
			\frac{2\d^2(x, y)}{4+\epsilon}\leq   \limi_{t\rightarrow0}-t \log p_\frac{t}{2}(x,y)\leq \lims_{t\rightarrow0}-t \log p_\frac{t}{2}(x,y)\leq\frac{2\d^2(x, y)}{4-\epsilon}.
		\end{equation}
		Letting   $\epsilon\rightarrow0$, we prove the lemma.
	\end{proof}
	
	\subsection{Heat kernel regularization}\label{sub}
	To establish the quantitative stability, we adopt a regularization technique similar to those in \cite{MerigotDelalandeChazal2020, arXiv:2504.05412} and \cite{GTJEMS, GTT25} (see also  \cite{ChizatDelalandeVaskevicius2024,CorderoErausquinKlartag2015,MischlerTrevisan2025}).

For $\psi\in\Lip(Y,\d)$,  define 
	\begin{equation*}
	\bar \psi (y)= \sup_{z \in Y} \big\{ \psi(z) - \text{Lip}(\psi) \, \d(z, y) \big\},\quad y\in X.
	\end{equation*}
By MacShane's Lemma,   $\Lip(\bar\psi) =\Lip(\psi)$ and ${\bar\psi}\restr{Y} = \psi \restr Y$. 
	Denote $D=\diam(S\cup Y)$,  $\Lambda_\psi=D+ \Lip(\psi) + 1$ and  define $\psi_*={\bar \psi}-\Lambda_\psi \d(\cdot, Y)$.  
	
	For  $t>0$,  we define the heat kernel regularized $c$-transform as
	\begin{equation*}
\Lip(X, \d)\ni	\varphi\mapsto 	\Phi_t[\varphi](x):=-t \log \int_X e^{\frac{{\varphi}(y)}{t}} p_{\frac{t}{2}}(x, y)\, \d\mm(y),
	\end{equation*}
	and define the heat kernel regularized Kantorovich functional as
	\begin{equation*}
		\K_t[\varphi]:=\int_S \Phi_t[\varphi]\,\d\rho.
	\end{equation*}

	For $\varphi\in\Lip(X,\d)$ and $x\in X$, we associate a probability measure
	\begin{equation*}
		\d\mu_x^t[\varphi](y):=\frac{e^{\frac{\varphi(y)}{t}}p_\frac{t}{2}(x,y)\,\d\mm(y)}{\int_{X} e^{\frac{\varphi(y)}{t}} p_\frac{t}{2}(x,y)\,\d\mm(y)}
	\end{equation*}
and define  $	\mu^t[\varphi]:=\int_X \mu_x^t[\varphi]\, \d \rho(x)$.   This means,   for any $v\in\Lip(X,\d)$,  it holds
	\begin{equation}\label{mu}
	\E_{\mu^t[\varphi]}(v)=\int_X  \E_{\mu_x^t[\varphi]}(v) \,\d \rho(x).	
	\end{equation}

	Next we prove the Sobolev regularity of $x\mapsto \E_{\mu_x^t[\varphi]}(v)$.  We refer to \cite{ zbMATH06303881} for Sobolev calculus on metric measure spaces.  

		\begin{lemma}\label{Lemma3.3}
		For  $t\in \big(0,\frac{D+1}{\sqrt{(N-1)|K|}}\wedge1\big)$, $\psi\in\Lip(Y,\d)$ and  $v\in\Lip(X,\d)$,  there holds $\E_{\mu_x^t[\psi_*]}(v)\in W^{1,2}(X,\d,\mm)$.
	\end{lemma}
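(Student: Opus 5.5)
We write $g:=e^{\psi_*/t}$ and $f(x):=\E_{\mu_x^t[\psi_*]}(v)=A(x)/B(x)$, where
\[
A(x)=\int_X g(y)v(y)\,p_{\frac t2}(x,y)\,\d\mm(y)=P_{t/2}(gv)(x),\qquad B(x)=P_{t/2}g(x).
\]
The plan is to show that $A$ and $B$ are regular heat-semigroup images, then to pass to the quotient, controlling $f$ and $|\nabla f|$ by means of the Gaussian localisation of $\mu_x^t[\psi_*]$.

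\textbf{Step 1: integrability of the weights.} By construction $\psi_*\le \sup_Y\psi+\Lip(\psi)D-\Lambda_\psi\,\d(\cdot,Y)$. Hence $g\le C e^{-\Lambda_\psi \d(\cdot,Y)/t}$, and $g$ is Lipschitz. By Bishop--Gromov, $\mm(B(y_0,r))\le Ce^{\sqrt{(N-1)|K|}\,r}$. The restriction $t<\frac{D+1}{\sqrt{(N-1)|K|}}$ gives
\[
\Lambda_\psi/t\ \ge\ (D+1)/t\ >\ \sqrt{(N-1)|K|},
\]
so $g$ and $g|v|$ (the latter grows at most linearly) belong to $L^1\cap L^\infty$, hence to $L^2$. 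In particular $A$ and $B$ are finite and $B>0$ everywhere, by positivity of the heat kernel.

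\textbf{Step 2: regularity of $A$ and $B$.} Since $P_{s}$ maps $L^2$ into $D(\Delta)\subset W^{1,2}$, both $A$ and $B$ lie in $W^{1,2}(X)$. They are moreover locally Lipschitz, by the $L^\infty$-to-Lipschitz regularisation of ${\rm RCD}$ spaces (Bakry--\'Emery).

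By the chain rule, $f=A/B\in W^{1,2}_{loc}$. Differentiating under the integral then gives
\[
|\nabla f|(x)\le \int_X |v(y)-f(x)|\,\big|\nabla_x\log p_{\frac t2}(x,y)\big|\,\d\mu_x^t[\psi_*](y).
\]

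\textbf{Step 3: pointwise bounds.} Combine the gradient estimate for the logarithm of the heat kernel,
\[
|\nabla_x\log p_s(x,y)|\le C\big(\d(x,y)/s+1\big)
\]
(valid on ${\rm RCD}(K,N)$, from the Li--Yau/Hamilton type estimates), with the two-sided Gaussian bounds \eqref{hk0}. Together with the factor $e^{-\Lambda_\psi\d(y,Y)/t}$, these show that $\mu_x^t$ is concentrated, with Gaussian tails, within distance $O(\d(x,Y)+1)$ of $x$. Using $|v(y)-f(x)|\le \Lip(v)\int\d(y,z)\,\d\mu_x^t(z)$, this yields $|f(x)|\le C(1+\d(x,Y))$ and $|\nabla f|(x)\le C(1+\d(x,Y))$, with $C$ depending on $t,\Lip(v),\Lip(\psi),K,N,D$.

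\textbf{Step 4: global $L^2$ integrability (main obstacle).} Converting these pointwise bounds into $f,|\nabla f|\in L^2(X,\mm)$ is where I expect difficulty, since the bounds above do not decay at infinity. I would first try to obtain decay by exploiting the factor $\Lambda_\psi\d(\cdot,Y)$ together with the exponential-volume versus Gaussian competition guaranteed by the restriction on $t$. If genuine decay cannot be extracted, the $W^{1,2}$ membership is to be read in the sense used later in the paper, namely for the function restricted to a neighbourhood of $\overline S$, where Steps 1--3 already give boundedness and a Lipschitz bound and hence $W^{1,2}$.
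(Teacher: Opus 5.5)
As a proof of the statement as written, your proposal has a gap at Step 4. That gap cannot be filled, and you should say so rather than hedge.

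Your Steps 1--2 coincide with the paper's proof. Both write the expectation as the quotient $H_{t/2}(ve^{\psi_*/t})/H_{t/2}(e^{\psi_*/t})$ and get integrability of $e^{\psi_*/t}$ and $ve^{\psi_*/t}$ from Bishop--Gromov and the restriction on $t$. Both then use heat-flow regularization, positivity of the denominator (\eqref{3.15}) and the chain rule. The paper then asserts $1/H_{t/2}(e^{\psi_*/t})\in W^{1,2}(X,\d,\mm)\cap L^\infty(\mm)$ and concludes global membership; you stop at $W^{1,2}_{loc}$.

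No decay can be extracted in Step 4, because the statement as written is false whenever $\mm(X)=\infty$. For $v\equiv 1$ one has $\E_{\mu_x^t[\psi_*]}(v)=1$ for every $x$, which is not in $L^2(\mm)$ (e.g.\ $X=\R^n$). The same observation shows where the paper's argument breaks. Since $H_{t/2}(e^{\psi_*/t})\in L^2(\mm)$ and $\mm(X)=\infty$, it cannot be bounded below by a positive constant, so its reciprocal is neither bounded nor square integrable. Pointwise positivity gives the claimed conclusion only in settings such as $X$ compact. Your own Step 3 bound $|f(x)|\le C(1+\d(x,Y))$ already points to this. State explicitly that the global claim fails and replace it by the local one.

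The local statement is all that is used later in the paper:
\begin{itemize}
\item the chain rule at $\mm$-a.e.\ $x\in S$ in Lemma \ref{prop0.5};
\item the cut-off $\eta$ supported in $B(x_0,2r_0)$ in Lemma \ref{prop3.8};
\item the Poincar\'e inequality on balls $B\subseteq S$ in Proposition \ref{2.11}.
\end{itemize}
For this version your Steps 1--2 already suffice, and Step 3 can be dropped. $A$ and $B$ are bounded and Lipschitz. On any bounded set $\Omega$ one has $\inf_\Omega B>0$, either by continuity, positivity and properness of $X$, or quantitatively from the lower Gaussian bound in \eqref{hk0} applied to $\int_Y e^{\psi/t}p_{t/2}(x,y)\,\d\mm(y)$ together with $\mm(Y)>0$. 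Hence $f=A/B$ is Lipschitz on bounded sets, so $f\in W^{1,2}_{loc}$. Moreover $\eta f\in W^{1,2}(X,\d,\mm)$ for every Lipschitz cut-off $\eta$ with bounded support, and it agrees with $f$ near $\bar S$ when $\eta\equiv 1$ there. The log-gradient and concentration estimates in Step 3 would need their own justification on ${\rm RCD}(K,N)$ spaces. As you noticed, they only confirm that $f$ and $|\nabla f|$ do not decay.
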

	
	\begin{proof}
		Denote by $H_t f(x)=\int_X f(y) p_t(x, y)\,\d \mm(y)$ the heat flow from $f$. Then
		$$\E_{\mu_x^t[\psi_*]}(v)=\frac{\int_{X} v(y)e^{\frac{\psi_*(y)}{t}} p_\frac{t}{2}(x,y)\,\d\mm(y)}{\int_{X} e^{\frac{\psi_*(y)}{t}} p_\frac{t}{2}(x,y)\,\d\mm(y)}=\frac{H_\frac{t}{2}(v e^\frac{\psi_*}{t})(x)}{H_\frac{t}{2}(e^\frac{\psi_*}{t})(x)}.$$
		{\bf Claim}: $v e^\frac{\psi_*}{t}, e^\frac{\psi_*}{t}\in L^2(\mm)\cap L^\infty(\mm)$.
		
		Note that  $|v(y)| \leq \sup_{Y}|v|+ \Lip(v)\diam(Y)+\Lip(v) \d(y, Y)$ and $\psi_*(y)\leq \sup_Y \psi-(D+1)\d(y, Y)$.  So  $v e^\frac{\psi_*}{t}, e^\frac{\psi_*}{t}\in  L^\infty(\mm)$.
		
		 Fix $y_0\in Y$ and denote $B_i=\{y\in X: i\leq\d(y,Y)<i+1\}, i\in \N$.   It holds
		\begin{equation}\label{3.6}
			\begin{aligned}
			\int_X e^\frac{2\psi_*}{t}\d\mm
			&\leq \sum_{i=0}^{+\infty} e^{\frac{2 \sup_Y \psi}{t}}e^\frac{-2(D+1)i}{t}\mm(B_i)\\
			&\leq  \sum_{i=0}^{+\infty} e^{\frac{2 \sup_Y \psi}{t}}e^\frac{-2(D+1)i}{t}\mm(B(y_0, \diam(Y)+i+1)).		
			\end{aligned}
		\end{equation}

		Without loss of generality,   we may assume $K<0$.  By Bishop--Gromov inequality \cite[Theorem 2.3]{zbMATH05049052}, we have
		\begin{equation*}
		\mm\big(B(y_0, \diam(Y)+i+1)\big)\leq c_1\int_0^{\diam(Y)+i+1}\sinh^{N-1}(\sqrt{\frac{-K}{N-1}}s)\,\d s
		\end{equation*}
		where $c_1=\frac{\mm(B(y_0, \diam(Y)))}{\int_0^{\diam(Y)}\sinh^{N-1}(\sqrt{\frac{-K}{N-1}}s)\d s}$.
		Note that $\sinh^{N-1}(\sqrt{\frac{-K}{N-1}}s)\leq \frac{e^{\sqrt{(N-1)|K|}s}}{2^{N-1}}$, then 
		\begin{equation}\label{2.7}
		\mm\big(B(y_0, \diam(Y)+i+1)\big)\leq c_1 \int_0^{\diam(Y)+i+1}\frac{e^{\sqrt{(N-1)|K|}s}}{2^{N-1}}\,\d s\leq c_2 e^{\sqrt{(N-1)|K|}i}
		\end{equation}
		where $c_2=c_1 \frac{e^{\sqrt{(N-1)|K|}(\diam(Y)+1)}}{2^{N-1}\sqrt{{(N-1)}|K|}}$. 

		For $t<\frac{D+1}{\sqrt{(N-1)|K|}}$,  it holds $\sqrt{(N-1)|K|}-\frac{2(D+1)}{t}<-\frac{D+1}{t}$,  so by \eqref{3.6} and \eqref{2.7},
		\begin{equation}
			\int_X e^\frac{2\psi_*}{t}\,\d\mm\leq \sum_{i=0}^{+\infty} e^{\frac{2 \sup_Y \psi}{t}}c_2 e^\frac{-2(D+1)i}{t}e^{\sqrt{(N-1)|K|}i}\leq  \sum_{i=0}^{+\infty} e^{\frac{2 \sup_Y \psi}{t}}c_2 e^\frac{-(D+1)i}{t}<+\infty.
		\end{equation}
		Similarly, we can prove
			$\int_X v^2 e^\frac{2\psi_*}{t}\,\d\mm<+\infty$ and we prove the claim.
		\medskip
		
		By the regularization of heat flow  \cite[Theorem 6.5]{zbMATH06303881}, we have $H_\frac{t}{2}(v e^\frac{\psi_*}{t}), H_\frac{t}{2}(e^\frac{\psi_*}{t})\in W^{1,2}(X,\d,\mm)\cap \Lip(X,\d)\cap L^\infty(\mm)$. Moreover, by the heat kernel estimate \eqref{hk0},
		\begin{equation}\label{3.15}
			\begin{aligned}
				&H_\frac{t}{2}(e^\frac{\psi_*}{t})(x)
				\geq \int_{Y} e^{\frac{\psi(y)}{t}} p_\frac{t}{2}(x,y)\,\d\mm(y)>0.
			\end{aligned}
		\end{equation}
	So $\frac{1}{H_\frac{t}{2}(e^\frac{\psi_*}{t})}\in W^{1,2}(X,\d,\mm)\cap L^\infty(\mm)$ and by chain rule  (see \cite[Theorem 4.3.3]{zbMATH07183685}), we know the function  $x\mapsto \E_{\mu_x^t[\psi_*]}(v)\in W^{1,2}(X,\d,\mm)$.
	\end{proof}
		
\medskip

	\begin{lemma}\label{lemma0.3}
		For any $v\in\Lip(X,\d)$ and $\psi\in \Lip(Y,  \d)$, we have 
		\begin{itemize}
		\item  $\frac{\d}{\d s}\restr{s=0}{\K}_t[\psi_*+sv]=-  \E_{\mu^t[\psi_*]}(v)$;
		\item $\frac{\d^2}{\d s^2}\restr{s=0}{\K}_t[\psi_*+sv]=-\frac{1}{t}\int_S {\rm{Var}}_{ \mu_x^t[\psi_*]}(v)\,\d\rho(x)$.
		
		\end{itemize}
		
	For abbreviation,  we  can write
		$$\nabla {\K}_t[\psi_*]=- \mu^t[\psi_*],\quad  \langle D^2{\K}_t[\psi_*]v, v\rangle=-\frac{1}{t}\int_S {\rm{Var}}_{ \mu_x^t[\psi_*]}(v)\,\d\rho(x).$$
	\end{lemma}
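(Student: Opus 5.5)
The plan is to differentiate under the integral sign, first in $y$ inside $\Phi_t$ and then in $x$ inside $\K_t$. Fix $x\in S$ and set
$$Z(s):=\int_X e^{\frac{\psi_*(y)+sv(y)}{t}}p_{\frac t2}(x,y)\,\d\mm(y),$$
so that $\Phi_t[\psi_*+sv](x)=-t\log Z(s)$. Formally,
$$Z'(s)=\frac1t\int_X v\,e^{\frac{\psi_*+sv}{t}}p_{\frac t2}\,\d\mm,\qquad Z''(s)=\frac1{t^2}\int_X v^2e^{\frac{\psi_*+sv}{t}}p_{\frac t2}\,\d\mm.$$
Hence
$$\frac{\d}{\d s}\big(-t\log Z\big)=-t\frac{Z'}{Z}=-\E_{\mu_x^t[\psi_*+sv]}(v)$$
and
$$\frac{\d^2}{\d s^2}\big(-t\log Z\big)=-t\Big(\frac{Z''}{Z}-\frac{(Z')^2}{Z^2}\Big)=-\frac1t{\rm Var}_{\mu_x^t[\psi_*+sv]}(v).$$
Evaluating at $s=0$ and integrating against $\rho$ over $S$ gives both formulas. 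The first one equals $-\int_S \E_{\mu_x^t[\psi_*]}(v)\,\d\rho(x)=-\E_{\mu^t[\psi_*]}(v)$ by \eqref{mu}, which is the meaning of $\nabla\K_t[\psi_*]=-\mu^t[\psi_*]$. This requires the two interchanges of derivative and integral to be justified.

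\textbf{Justification in $y$.} The main obstacle is integrability at infinity, since $X$ may be noncompact and $v$ may grow linearly. I restrict to $|s|\le s_0$ for a small $s_0>0$, say $s_0\le 1/(2\Lip(v)+1)$, and use the same bounds as in Lemma~\ref{Lemma3.3}:
$$|v(y)|\le A+\Lip(v)\,\d(y,Y),\qquad \psi_*(y)\le \sup_Y\psi-(D+1)\,\d(y,Y).$$
With the heat kernel upper bound \eqref{hk0}, where $p_{t/2}(x,\cdot)\le C(x,t)$ is bounded, and the volume growth estimate \eqref{2.7}, the integrands $|v|^k e^{(\psi_*+sv)/t}p_{t/2}(x,\cdot)$ for $k=0,1,2,3$ are dominated, uniformly in $|s|\le s_0$, by
$$C\,(1+\d(y,Y))^3e^{-\frac{(D+1/2)\,\d(y,Y)}{t}}.$$
This function is integrable: split $X$ into the shells $B_i$ and use $\mm(B_i)\le c_2e^{\sqrt{(N-1)|K|}\,i}$, exactly as in Lemma~\ref{Lemma3.3}. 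For the range $t<\frac{D+1}{\sqrt{(N-1)|K|}}$ a slight adjustment of the constant is needed, or one may simply use the Gaussian decay of $p_{t/2}$ in \eqref{hk0}, which beats any exponential volume growth. Dominated convergence then gives $Z\in C^2$ near $0$. Moreover $Z>0$ by the lower bound \eqref{3.15}.

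\textbf{Justification in $x$.} To pass the derivatives through $\int_S\cdot\,\d\rho$, I need bounds uniform in $x\in S$. The constants in \eqref{hk0} are uniform for $x$ in the bounded set $S$, because Bishop--Gromov gives $\mm(B(y,\sqrt{t/2}))$ comparable for $y$ in bounded sets. Consequently:
\begin{itemize}
\item $Z(s;x)$ is bounded above and below by positive constants uniformly for $x\in S$ and $|s|\le s_0$;
\item $Z'$ and $Z''$ are uniformly bounded there;
\item $\frac{\d^2}{\d s^2}\Phi_t$ is bounded uniformly, since $\Phi_t$ is a combination of $Z''/Z$ and $(Z'/Z)^2$.
\end{itemize}
By the mean value theorem, $s\mapsto \Phi_t[\psi_*+sv](x)$ is $C^2$ with derivatives bounded uniformly in $x$. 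Dominated convergence, with $\rho$ a probability measure, then yields the two formulas for $\K_t$. Measurability in $x$ is clear: Lemma~\ref{Lemma3.3} gives $x\mapsto\E_{\mu_x^t[\psi_*]}(v)\in W^{1,2}$, and the variance is handled analogously with $v^2$ in place of $v$.
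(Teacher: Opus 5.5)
Your proposal is correct and follows the paper's route. You compute the first two $s$-derivatives of $-t\log Z(s)$ pointwise in $x$, then differentiate under $\int_S\cdot\,\d\rho$ using bounds uniform in $x\in S$, with the domination coming from the decay of $e^{\psi_*/t}$ as in Lemma~\ref{Lemma3.3}. Your restriction to $|s|\le s_0$ with $s_0\Lip(v)<\frac12$ is slightly more careful than the paper's $|s|\le 1$. The domination can also be simplified: bound $|v|^k e^{(\psi_*+sv)/t}$ by a constant and use $\int_X p_{t/2}(x,y)\,\d\mm(y)=1$, which avoids volume-growth estimates and any restriction on $t$.
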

	
	\begin{proof}
		Let $|s| \leq 1$.
		By direct computation,
		\begin{equation*}
			\frac{\d}{\d s}\Phi_t[\psi_*+sv]=-\frac{\int_{X} v(y)e^{\frac{\psi_*(y)+sv(y)}{t}} p_\frac{t}{2}(x,y)\,\d\mm(y)}{\int_{X} e^{\frac{\psi_*(y)+sv(y)}{t}} p_\frac{t}{2}(x,y)\,\d\mm(y)}.
		\end{equation*}
		Similar to Lemma \ref{Lemma3.3},  we can see that
		$\left|\frac{\d}{\d s}\Phi_t[\psi_*+sv]\right|$
		 is uniformly bounded in $s$.
		 
		By differentiating under the integral defining ${\K}_t[\psi_*]$ and letting $s=0$, we have
		\begin{equation*}
			\frac{\d}{\d s}\restr{s=0}{\K}_t[\psi_*+sv]=\int_S \frac{\d}{\d s}\restr{s=0}\Phi_t[\psi_*+sv]\,\d\rho(x)=- \E_{\mu^t[\psi_*]}(v).
		\end{equation*}
		
		Similarly,    we can prove
		\begin{equation*}
			\begin{aligned}
				&\frac{\d^2}{\d s^2}\Phi_t[\psi_*+sv]\\
				=&-\frac{1}{t}\left(\frac{\int_{X} v^2(y)e^{\frac{\psi_*(y)+sv(y)}{t}}p_\frac{t}{2}(x,y)\,\d\mm(y)}{\int_{X} e^{\frac{\psi_*(y)+sv(y)}{t}}p_\frac{t}{2}(x,y)\,\d\mm(y)}
				-\left|\frac{\int_{X} v(y) e^{\frac{\psi_*(y)+sv(y)}{t}} p_\frac{t}{2}(x,y)\,\d\mm(y)}{\int_{X} e^{\frac{\psi_*(y)+sv(y)}{t}} p_\frac{t}{2}(x,y)\,\d\mm(y)}\right|^2\right)
			\end{aligned}
		\end{equation*}
		and
		\begin{equation*}
			\frac{\d^2}{\d s^2}\restr{s=0}{\K}_t[\psi_*+sv]=\int_S \frac{\d^2}{\d s^2}\restr{s=0}\Phi_t[\psi_*+sv]\,\d\rho(x)=-\frac{1}{t}\int_S {\rm{Var}}_{ \mu_x^t[\psi_*]}(v)\,\d\rho(x).
		\end{equation*}
	\end{proof}

	\subsection{Gradient estimate}
	To leverage the Hessian formula in Lemma \ref{lemma0.3}, we need to bound the variance term from below; this amounts to proving strong concavity of the regularised Kantorovich functional.  We achieve this by establishing a gradient estimate for the marginal density $x \mapsto \E_{\mu_x^t[\psi_*]}(v)$.

The following auxiliary lemma will be used throughout this section. 	 We refer to \cite{G-N,  zbMATH07183685} for a comprehensive introduction to $L^2$-normed  tangent module $L^2(TX)$.   By \cite[Proposition 2.9]{H-R} and \cite[Theorem 1.4.11]{G-N}, elements of $L^2(TX)$ admit local-coordinate representations.  Readers unfamiliar with non-smooth calculus may safely interpret the proof in the language of Riemannian geometry.

		\begin{lemma}\label{grad}
		Let $f\in L^2(\mm)\cap L^\infty(\mm)$,  and $g\in L^0(X\times X)$ be with $g(\cdot, y),  g(x, \cdot) \in W^{1,2}\ms$ for  $\mm$-a.e. $x, y\in X$. Then  
	$
		\int_X f(y)  g(x,y)\,\d\mm(y)\in W^{1,2}\ms$  and for any $\varphi \in W^{1,2}\ms$, we have  
\begin{equation}\label{l2.4}
\int \langle \nabla_x \int_X f(y)  g(x,y)\,\d\mm(y),\varphi(x) \rangle\,\d\mm(x)=\int_X f(y)\int \langle  \nabla g(\cdot,   y), \varphi\rangle\,\d\mm\,\d\mm(y)
\end{equation}
and
	\begin{equation}\label{17}
		\left|\nabla_x \int_X f(y)  g(x,y)\,\d\mm(y)\right|\leq \int_X |f(y)| |\nabla_x g(x,y)|\,\d\mm(y).
		\end{equation}
		
In particular,   we can write
		\begin{equation}
			\int_X f(y) \nabla_x g(x,y)\,\d\mm(y)=\nabla_x \int_X f(y)  g(x,y)\,\d\mm(y)\in L^2(TX).
		\end{equation}
	\end{lemma}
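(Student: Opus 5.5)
We use the duality between the Sobolev space and the tangent module. Set $G(x):=\int_X f(y)g(x,y)\,\d\mm(y)$. The candidate gradient is the vector field $V:=\int_X f(y)\nabla_x g(x,y)\,\d\mm(y)$. This is defined through its action on test vector fields: $\int\langle V,\omega\rangle\,\d\mm:=\int_X f(y)\int\langle\nabla g(\cdot,y),\omega\rangle\,\d\mm\,\d\mm(y)$ for $\omega\in L^2(TX)$. The plan has three steps.

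\textbf{Step 1: $V$ is a well-defined element of $L^2(TX)$.} We check measurability of $y\mapsto\int\langle\nabla g(\cdot,y),\omega\rangle\,\d\mm$, which is needed to make sense of the iterated integral. This comes from the joint measurability of $g$ and the fact that $\nabla$ is a closed operator. Boundedness then follows from
\[
\Big|\int_X f(y)\int\langle\nabla g(\cdot,y),\omega\rangle\,\d\mm\,\d\mm(y)\Big|\le \int|\omega|(x)\int_X|f(y)||\nabla_xg(x,y)|\,\d\mm(y)\,\d\mm(x),
\]
which is at most $\|\omega\|_{L^2}\,\big\|\int|f||\nabla_xg|\,\d\mm(y)\big\|_{L^2}$ by Fubini and Cauchy--Schwarz. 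Here I implicitly need $x\mapsto\int|f(y)||\nabla_xg(x,y)|\,\d\mm(y)$ to lie in $L^2$ (and $G\in L^2$). The lemma's hypotheses are too weak to guarantee this, so I will take it as part of the intended hypotheses. In the application, $g=p_{t/2}$, and Gaussian gradient bounds for the heat kernel together with $f\in L^2\cap L^\infty$ give it via Minkowski's integral inequality. By the Riesz representation in the Hilbert module $L^2(TX)$, $V$ then exists. Testing against $\omega=\chi_A\,\omega_0$ gives the pointwise bound $|V|\le\int|f||\nabla_xg|\,\d\mm(y)$ $\mm$-a.e., which is \eqref{17}.

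\textbf{Step 2: $G\in W^{1,2}$ with $\nabla G=V$.} I would use the weak (integration-by-parts) characterization of $W^{1,2}$ on RCD spaces: $G\in W^{1,2}$ with gradient $V$ iff $\int G\,{\rm div}\,\omega\,\d\mm=-\int\langle V,\omega\rangle\,\d\mm$ for all $\omega$ in the class of test vector fields $\sum_i h_i\nabla k_i$, with $h_i,k_i$ test functions. For such $\omega$ the divergence is bounded. The computation then reads
\[
\int G\,{\rm div}\,\omega=\int_X f(y)\int g(x,y)\,{\rm div}\,\omega(x)\,\d\mm(x)\,\d\mm(y)=-\int_X f(y)\int\langle\nabla g(\cdot,y),\omega\rangle\,\d\mm\,\d\mm(y),
\]
using Fubini in the first equality and, for a.e.\ $y$, the fact that $g(\cdot,y)\in W^{1,2}$ in the second. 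Density of test vector fields in $L^2(TX)$, valid on RCD spaces, and closedness of the gradient then give $G\in W^{1,2}$ with $\nabla G=V$.

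\textbf{Step 3: the identities.} Identity \eqref{l2.4} is the defining property of $V$ tested against $\nabla\varphi$; I read the $\varphi$ inside the bracket as $\nabla\varphi$, or as a general vector field. The final "in particular" statement is simply the notation $V=\nabla G$.

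\textbf{Main obstacle.} The delicate point is justifying Fubini in Step 2. This needs $\int\!\!\int|f(y)||g(x,y)||{\rm div}\,\omega(x)|<\infty$ and the integrability of $|f||\nabla_xg|$, together with the measurability in $y$ of the Sobolev objects. My approach is to first prove everything for truncated and localized $f$ (bounded support, with a cutoff in $x$ coming from the test field). I would then pass to the limit using the $L^2$ bound from Step 1 and the weak closedness of $W^{1,2}$: bounded sequences of gradients converge weakly, and the limit is identified by the integration-by-parts identity.
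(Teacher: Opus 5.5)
Your proposal is correct and follows essentially the paper's route: integrate by parts against test objects, apply Fubini, then conclude by density and the Riesz representation in $L^2(TX)$. The only difference is that the paper tests just against $\Delta\varphi$ with $\varphi\in{\rm TestF}$ (i.e.\ against gradient fields), which already suffices because your $V$, as a weak superposition of the gradients $\nabla g(\cdot,y)$, lies in the closure of $\{\nabla\varphi\}$, whereas you test against general fields $\sum_i h_i\nabla k_i$.

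Your caveat about the hypotheses is right and worth keeping. As stated, the lemma fails: on $\R$, take $f(y)=(1+|y|)^{-1}$ and $g(x,y)=\psi(x-y)e^{y^2}$ with $0\le\psi\in C^\infty_c(\R)$, $\psi\not\equiv 0$; then $\int_X f(y)g(\cdot,y)\,\d\mm(y)\notin L^2$. The paper's Fubini step tacitly uses exactly the extra integrability you add, namely that $\int_X|f||g|\,\d\mm(y)$ and $\int_X|f||\nabla_x g|\,\d\mm(y)$ lie in $L^2$, and this does hold for $g=p_{t/2}$ in the applications.
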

	\begin{proof}
	 For $\varphi\in	{\rm TestF}:=\big\{\phi\in \Lip(X, \d)\cap L^\infty(\mm)\cap \D(\Delta): \Delta \phi \in W^{1,2}(X,\d,\mm)\cap L^\infty(\mm)\}$,  by integration by parts and Fubini theorem, we get
		\begin{equation}
			\begin{aligned}
			&\int_X \Delta \varphi(x) \left(\int_X f(y)g(x,y)\,\d\mm(y)\right)\,\d\mm(x)\\
			=&\int_X f(y)\int_X g(x,y)\Delta \varphi(x)\,\d\mm(x)\,\d\mm(y)\\
			=&-\int_X f(y)\int_S \langle  \nabla_x g(x,y),\nabla \varphi(x)\rangle\,\d\mm(x)\,\d\mm(y).
			\end{aligned}
		\end{equation}
By density of ${\rm TestF}$ (cf.  \cite[\S 3.2]{G-N}) and the Riesz representation theorem for Hilbert module (cf.\cite[Theorem 3.2.14,  Example 3.2.15 ]{zbMATH07183685}), we prove the lemma.
			\end{proof}

\medskip

	\begin{lemma}\label{prop0.5}
		For any $\psi\in \Lip(Y,  \d)$,  it holds
		\begin{equation}\label{2.14}
			|\nabla_x \E_{\mu_x^t[\psi_*]}(v)|^2\leq {\rm{Var}}_{ \mu_x^t[\psi_*]}(v) {\rm{Var}}_{ \mu_x^t[\psi_*]}\big(\nabla_x \log p_\frac{t}{2}(x,\cdot)\big)
		\end{equation}
		for $\mm$-a.e. $x\in S$, where   $${\rm{Var}}_{ \mu_x^t[\psi_*]}\big(\nabla_x \log p_\frac{t}{2}(x,\cdot)\big):=\E_{\mu_x^t[\psi_*]}\left(\big|\nabla_x \log p_\frac{t}{2}(x,\cdot)-	\E_{\mu_x^t[\psi_*]}(\nabla_x \log p_\frac{t}{2}(x,\cdot))\big|^2\right)$$
		and
		\begin{equation*}
			\begin{aligned}
				\E_{\mu_x^t[\psi_*]}\big(\nabla_x \log p_\frac{t}{2}(x,\cdot)\big)
				= \frac{\int_X e^\frac{\psi_*(z)}{t} \nabla_x p_\frac{t}{2}(x,z)\,\d\mm(z)}{\int_X e^\frac{\psi_*(z)}{t} p_\frac{t}{2}(x,z)\,\d\mm(z)}.
			\end{aligned}
		\end{equation*}
	\end{lemma}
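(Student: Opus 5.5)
We write $\E_{\mu_x^t[\psi_*]}(v)$ as the quotient $A(x)/B(x)$, where
\[
A(x)=\int_X v(y)e^{\psi_*(y)/t}p_{t/2}(x,y)\,\d\mm(y),\qquad B(x)=\int_X e^{\psi_*(y)/t}p_{t/2}(x,y)\,\d\mm(y),
\]
and differentiate it. By the claim in the proof of Lemma \ref{Lemma3.3}, both $ve^{\psi_*/t}$ and $e^{\psi_*/t}$ lie in $L^2(\mm)\cap L^\infty(\mm)$. The heat kernel $p_{t/2}(\cdot,y)$ is in $W^{1,2}$, and is symmetric in its two variables. So Lemma \ref{grad} applies to $A$ and $B$ with $g(x,y)=p_{t/2}(x,y)$, and gives, in $L^2(TX)$,
\[
\nabla A(x)=\int_X v(y)e^{\psi_*(y)/t}\nabla_x p_{t/2}(x,y)\,\d\mm(y),\qquad \nabla B(x)=\int_X e^{\psi_*(y)/t}\nabla_x p_{t/2}(x,y)\,\d\mm(y).
\]
Since $B>0$ and $1/B\in W^{1,2}\cap L^\infty$ locally on $S$ (Lemma \ref{Lemma3.3} and \eqref{3.15}), the Leibniz and chain rules give, $\mm$-a.e.,
\[
\nabla_x \E_{\mu_x^t}(v)=\frac{\nabla A}{B}-\frac{A\,\nabla B}{B^2}.
\]

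Next we rewrite this as a covariance. Writing $\nabla_x p_{t/2}(x,y)=p_{t/2}(x,y)\nabla_x\log p_{t/2}(x,y)$, which is legitimate since $p_{t/2}>0$ by the lower bound in \eqref{hk0}, we get
\[
\frac{\nabla A}{B}=\E_{\mu_x^t}\big(v\,\nabla_x\log p_{t/2}(x,\cdot)\big),\qquad \frac{\nabla B}{B}=\E_{\mu_x^t}\big(\nabla_x\log p_{t/2}(x,\cdot)\big).
\]
Set $w(y):=\nabla_x\log p_{t/2}(x,y)$ and $\bar w:=\E_{\mu_x^t}(w)$. Then
\[
\nabla_x\E_{\mu_x^t}(v)=\E_{\mu_x^t}\Big(\big(v-\E_{\mu_x^t}(v)\big)\,(w-\bar w)\Big).
\]
This holds because subtracting the constants $\E_{\mu_x^t}(v)$ and $\bar w$ costs nothing: $\E_{\mu_x^t}(w-\bar w)=0$ and $\E_{\mu_x^t}(v-\E_{\mu_x^t}(v))=0$. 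Cauchy--Schwarz then yields \eqref{2.14}:
\[
|\nabla_x\E_{\mu_x^t}(v)|\le \E_{\mu_x^t}\big(|v-\E v|\,|w-\bar w|\big)\le {\rm Var}_{\mu_x^t}(v)^{1/2}\,{\rm Var}_{\mu_x^t}(w)^{1/2}.
\]
The first inequality is the pointwise norm inequality \eqref{17} of Lemma \ref{grad}, applied to the vector-valued integral. The second is Cauchy--Schwarz for the probability measure $\mu_x^t$.

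The main obstacle is making the second step rigorous in the nonsmooth setting, where "$x$-gradients of a $y$-indexed family" are elements of $L^2(TX)$ only $\mm$-a.e. We must justify that $y\mapsto \nabla_x\log p_{t/2}(x,y)$ is a well-defined measurable family with $\E_{\mu_x^t}|w|^2<\infty$ for a.e. $x$, and that \eqref{17} extends to the centered integrand $(v(y)-c)(w(y)-\bar w)$. The quantity $c=\E_{\mu_x^t}(v)$ depends on $x$, so we have to freeze it.

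We would handle this as follows. Fix $x$, and test the identity for $\nabla_x\E_{\mu_x^t}(v)$ against an arbitrary $\varphi\in L^2(TX)$ via \eqref{l2.4}. This identifies $\langle\nabla_x\E_{\mu_x^t}(v),\varphi\rangle$ with $\E_{\mu_x^t}\big((v-c)\langle w-\bar w,\varphi\rangle\big)$ at a.e.\ point. Applying Cauchy--Schwarz there and then taking the supremum over $|\varphi|\le1$ via local coordinate representations (\cite[Proposition 2.9]{H-R}) gives the pointwise inequality. Finiteness of $\E_{\mu_x^t}|w|^2$ follows from the Li--Yau type gradient bound for heat kernels on ${\rm RCD}(K,N)$ spaces, combined with the Gaussian decay of $e^{\psi_*/t}$ away from $Y$.
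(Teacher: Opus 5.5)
Your proposal is correct and follows essentially the same route as the paper. Both apply the quotient rule to $H_{t/2}(ve^{\psi_*/t})/H_{t/2}(e^{\psi_*/t})$ via Lemma~\ref{grad} and the chain rule, rewrite the gradient as the covariance of $v$ and $\nabla_x\log p_{t/2}(x,\cdot)$ under $\mu_x^t[\psi_*]$ (the centered cross terms vanish), and conclude with the pointwise bound \eqref{17} and Cauchy--Schwarz. Your extra step of freezing the $x$-dependent centering constants, by testing against $\varphi$ with $|\varphi|\le 1$ and taking a supremum in local coordinates, is a sound way to make rigorous a step the paper leaves implicit.
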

	
	\begin{proof}
	First of all,  by \cite{zbMATH06580546},   $p_s(\cdot,y)\in W^{1,2}(X,\d,\mm)$ for $\mm$-a.e. $y\in X$ so all the formulas above are well-defined.  	By Lemma \ref{Lemma3.3}, Lemma \ref{grad} and the  chain rule for  $L^2$-normed  modules (cf. \cite[Theorem 4.3.3]{zbMATH07183685}),  we have
		\begin{eqnarray*}
		&&\nabla_x \E_{\mu_x^t[\psi_*]}(v)=\frac{H_\frac{t}{2}(e^\frac{\psi_*}{t})\nabla H_\frac{t}{2}(v e^\frac{\psi_*}{t})-H_\frac{t}{2}(v e^\frac{\psi_*}{t})\nabla H_\frac{t}{2}(e^\frac{\psi_*}{t})}{\big(H_\frac{t}{2}(e^\frac{\psi_*}{t})\big)^2}(x)\\
		&=& \int_{X} v(y)\nabla_x \log p_\frac{t}{2}(x,y)\,\d\mu_x^t[\psi_*](y)-\E_{\mu_x^t[\psi_*]}(v)\int_{X} \nabla_x \log p_\frac{t}{2}(x,y)\,\d\mu_x^t[\psi_*](y)\\ &=& \int_{X} \big(v(y)-\E_{\mu_x^t[\psi_*]}(v)\big)\left(\nabla_x \log p_\frac{t}{2}(x,y)-	\E_{\mu_x^t[\psi_*]}\big(\nabla_x \log p_\frac{t}{2}(x,\cdot)\big)\right)\,\d\mu_x^t[\psi_*](y),
		\end{eqnarray*}
where in the last equality we use the identity
\begin{eqnarray*}
&&\int_{X} \big(v(y)-\E_{\mu_x^t[\psi_*]}(v)\big)\E_{\mu_x^t[\psi_*]}\big(\nabla_x \log p_\frac{t}{2}(x,\cdot)\big)\,\d\mu_x^t[\psi_*](y)\\ &=&\E_{\mu_x^t[\psi_*]}\big(\nabla_x \log p_\frac{t}{2}(x,\cdot)\big)\int_{X} \big(v(y)-\E_{\mu_x^t[\psi_*]}(v)\big)\,\d\mu_x^t[\psi_*](y)=0.
\end{eqnarray*}	

By Lemma \ref{grad} and  H\"older inequality we get  \eqref{2.14}.
	\end{proof}
	
	\begin{lemma}\label{Lemma3.6}
		The following identity holds: 
		\begin{equation}
			{\rm{Var}}_{ \mu_x^t[\psi_*]}\big(\nabla_x \log p_\frac{t}{2}(x,\cdot)\big)= \Delta_x \log \left(H_\frac{t}{2}(e^\frac{\psi_*}{t})(x)\right) - \mathbb{E}_{\mu_x^t[\psi_*]}\big(\Delta_x \log p_{\frac{t}{2}}(x, \cdot)\big).
		\end{equation}
	\end{lemma}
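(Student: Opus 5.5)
This identity is the classical formula for the Laplacian of a log-partition function. I would derive it by writing $Z(x):=H_\frac{t}{2}(e^\frac{\psi_*}{t})(x)=\int_X e^{\frac{\psi_*(y)}{t}}p_\frac t2(x,y)\,\d\mm(y)$ and $w_y(x):=e^{\frac{\psi_*(y)}{t}}p_\frac t2(x,y)/Z(x)$, the density of $\mu_x^t[\psi_*]$ with respect to $\mm$. Formally, in the smooth setting, one has $\nabla_x\log Z=\int \nabla_x\log p_\frac t2(x,y)\,w_y(x)\,\d\mm(y)=\E_{\mu_x^t[\psi_*]}(\nabla_x\log p_\frac t2(x,\cdot))$; this is exactly the representation already used in Lemma \ref{prop0.5}, justified by Lemma \ref{grad}. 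Taking one more divergence gives
\[
\Delta_x\log Z=\int \Delta_x\log p_\frac t2(x,y)\,w_y\,\d\mm(y)+\int\big\langle\nabla_x\log p_\frac t2(x,y),\nabla_x w_y\big\rangle\,\d\mm(y),
\]
and $\nabla_x w_y=w_y\big(\nabla_x\log p_\frac t2(x,y)-\nabla_x\log Z\big)$. Hence the second integral equals $\E|\nabla_x\log p|^2-|\E\nabla_x\log p|^2$, which is the variance. Rearranging gives the claimed identity.

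To make this rigorous on an ${\rm RCD}(K,N)$ space, I would interpret the identity weakly (distributionally): test against $\varphi\in{\rm TestF}$ with support in a neighbourhood of $S$. I would show that $\log Z\in D(\Delta)$ locally, or at least that $\Delta\log Z$ makes sense as a measure with the stated density. For this I would use $\Delta_x p_\frac t2=\partial_s p_s|_{s=t/2}$, together with the heat kernel bounds \eqref{hk0} and the gradient and time-derivative bounds of \cite{zbMATH06580546} ($|\nabla_x p_s|\lesssim s^{-1/2}$ times a Gaussian, and $|\partial_sp_s|\lesssim s^{-1}$ times a Gaussian). Combined with the integrability of $e^{\psi_*/t}$ from the claim in Lemma \ref{Lemma3.3}, these bounds let me differentiate under the integral via Lemma \ref{grad}. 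This gives $\Delta Z=\int e^{\psi_*/t}\Delta_xp_\frac t2\,\d\mm$, with $Z$ bounded below by \eqref{3.15}. Then I would apply the chain rule for the Laplacian, $\Delta\log Z=\frac{\Delta Z}Z-|\nabla\log Z|^2$, which holds for Lipschitz $Z$ bounded away from zero. I would also use $\Delta_x\log p=\frac{\Delta_xp}{p}-|\nabla_x\log p|^2$ pointwise in $y$ for each fixed $y$, which is valid since $p_\frac t2(\cdot,y)>0$ is Lipschitz and in $D(\Delta)$. Substituting both formulas, the terms $\frac{\Delta Z}{Z}=\E(\Delta_xp/p)$ cancel, leaving $\E|\nabla\log p|^2-|\E\nabla\log p|^2$.

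The main obstacle is the justification step: interchanging $\Delta_x$ with the $y$-integral, and ensuring $\E_{\mu_x^t}(|\Delta_x\log p|)<\infty$. I expect to handle this through Gaussian domination of $|\nabla_x\log p_\frac t2|^2$ and $|\partial_s p|/p$. The Li–Yau type bound $|\nabla \log p_s|^2\lesssim s^{-1}(1+\d^2/s)$ is available on ${\rm RCD}(K,N)$ spaces, and the weight $e^{\psi_*/t}$ decays fast enough away from $Y$ for these bounds to be integrable.
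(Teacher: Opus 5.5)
Your proposal is correct and follows essentially the same route as the paper: apply the chain rule $\Delta\log u=\frac{\Delta u}{u}-|\nabla\log u|^2$ to both $u=p_{\frac t2}(\cdot,y)$ and $u=H_{\frac t2}(e^{\psi_*/t})$. Then identify $\nabla\log H_{\frac t2}(e^{\psi_*/t})$ and $\Delta H_{\frac t2}(e^{\psi_*/t})/H_{\frac t2}(e^{\psi_*/t})$ as the $\mu_x^t[\psi_*]$-expectations of $\nabla_x\log p_{\frac t2}(x,\cdot)$ and $\Delta_x p_{\frac t2}(x,\cdot)/p_{\frac t2}(x,\cdot)$, and cancel the common term; your justification of exchanging $\Delta_x$ with the $y$-integal via Gaussian bounds and the decay of $e^{\psi_*/t}$ is more careful than the paper, which simply invokes Lemma~\ref{grad} for that step.
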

	\begin{proof}
		By chain rule of Laplacian (cf. \cite[Theorem 5.2.3]{zbMATH07183685}), it holds
	\begin{equation}\label{De}
	\Delta\log u=\frac{\Delta u}{u}-|\nabla \log u|^2,
	\end{equation}
	for $u=p_\frac{t}{2}(x,y)$ and $H_\frac{t}{2}(e^\frac{b}{t})$. By Lemma~ \ref{grad},  we have
	\begin{equation}
		\begin{aligned}
		&{\rm{Var}}_{ \mu_x^t[\psi_*]}\big(\nabla_x \log p_\frac{t}{2}(x,\cdot)\big)\\
		=&\int_X |\nabla_x \log p_\frac{t}{2}(x,y)|^2\,\d\mu_x^t[\psi_*](y)-\left|\int_X \nabla_x \log p_\frac{t}{2}(x,y)\,\d\mu_x^t[\psi_*](y)\right|^2\\
		\mathop{=}^{\eqref{De}}&\int_X \frac{\Delta_x p_\frac{t}{2}(x,y)}{ p_\frac{t}{2}(x,y)}\,\d\mu_x^t[\psi_*](y)-\mathbb{E}_{\mu_x^t[\psi_*]}\big(\Delta_x \log p_{\frac{t}{2}}(x, \cdot)\big)\\
		&-\left|\int_X \nabla_x \log p_\frac{t}{2}(x,y)\,\d\mu_x^t[\psi_*](y)\right|^2\\
		=&\frac{\Delta_x H_\frac{t}{2}(e^\frac{\psi_*}{t})(x)}{H_\frac{t}{2}(e^\frac{\psi_*}{t})(x)}-|\nabla \log H_\frac{t}{2}(e^\frac{\psi_*}{t})(x)|^2- \mathbb{E}_{\mu_x^t[\psi_*]}\big(\Delta_x \log p_{\frac{t}{2}}(x, \cdot)\big)\\
		\mathop{=}^{\eqref{De}}&\Delta_x \log \left(H_\frac{t}{2}(e^\frac{\psi_*}{t})(x)\right)  - \mathbb{E}_{\mu_x^t[\psi_*]}\big(\Delta_x \log p_{\frac{t}{2}}(x, \cdot)\big)
		\end{aligned}
	\end{equation}
	which is the thesis.
	\end{proof}

	\begin{lemma}\label{p3.7}
		For $t>0$ small enough,  it holds 
		\begin{equation}
			J_t(x):=\mathbb{E}_{\mu_x^t[\psi_*]}\big(|\nabla_x \log p_\frac{t}{2}(x,\cdot)|^2\big)\leq \frac{C_1(K,N, \Lambda_\psi)}{t^2}.
		\end{equation} 
	\end{lemma}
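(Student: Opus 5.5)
The plan is to combine a pointwise gradient bound for the logarithm of the heat kernel with the concentration of $\mu_x^t[\psi_*]$ near $Y$. The gradient bound I would use is the Li--Yau type (or Hamilton type) estimate on ${\rm RCD}(K,N)$ spaces, available from \cite{zbMATH06580546} and related works. For $s\in(0,1)$ it gives
\begin{equation*}
|\nabla_x \log p_s(x,y)|^2\leq C(K,N)\Big(\frac{\d^2(x,y)}{s^2}+\frac{1}{s}\Big).
\end{equation*}
An equivalent route is to note that $p_s(\cdot,y)$ is a positive solution of the heat equation, apply the Hamilton gradient estimate
\begin{equation*}
|\nabla \log u|^2\leq \frac{C}{s}\log\frac{M}{u},
\end{equation*}
and then insert the two-sided Gaussian bounds \eqref{hk0}. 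Either way, with $s=t/2$ we get
\begin{equation*}
|\nabla_x\log p_{t/2}(x,y)|^2\leq C(K,N)\,\frac{\d^2(x,y)+t}{t^2}.
\end{equation*}
Consequently
\begin{equation*}
J_t(x)\leq \frac{C(K,N)}{t^2}\Big(t+\E_{\mu_x^t[\psi_*]}\big(\d^2(x,\cdot)\big)\Big),
\end{equation*}
and the lemma reduces to showing that $\E_{\mu_x^t[\psi_*]}(\d^2(x,\cdot))$ is bounded uniformly in $x\in S$ and small $t$, by a constant depending on $K,N,\Lambda_\psi$.

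To bound this second moment, I split $X$ into $Y_1:=\{y:\d(y,Y)\leq 1\}$ and the shells $B_i=\{i\leq \d(y,Y)<i+1\}$, $i\geq 1$, as in the proof of Lemma \ref{Lemma3.3}. On $Y_1$, for $x\in S$, we have $\d(x,y)\leq D+1$, so this part contributes at most $(D+1)^2$. On the shells, $\psi_*\leq \sup_Y\psi-(D+1)i$ while $\d(x,y)\leq D+i+1$, and the heat kernel upper bound \eqref{hk0} controls $p_{t/2}$ by $C\,\mm(B(y,\sqrt{t/4}))^{-1}$.

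The denominator is bounded below by restricting to $Y$, where $\psi_*=\psi\geq \sup_Y\psi-\Lip(\psi)D$. Combining this with the Gaussian lower bound $p_{t/2}(x,y)\gtrsim \mm(B)^{-1}e^{-D^2/((2-\epsilon)t)}$ for $x\in S$, $y\in Y$, the denominator is at least
\begin{equation*}
c\,\mm(Y)\,e^{(\sup_Y\psi-\Lip(\psi)D)/t}\,e^{-D^2/((2-\epsilon)t)}\cdot(\text{polynomial in } t \text{ from ball volumes}).
\end{equation*}
The ratio of the shell-$i$ contribution to the denominator is then at most
\begin{equation*}
{\rm poly}(t^{-1})\,(D+i+1)^2\,e^{\sqrt{(N-1)|K|}\,i}\exp\Big(-\frac{(D+1)i-\Lip(\psi)D-D^2/(2-\epsilon)}{t}\Big).
\end{equation*}
The weight $\Lambda_\psi$ in the definition of $\psi_*$ is exactly what makes the exponent in the last factor strictly positive for $i\geq1$. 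More precisely, $\psi_*$ carries the slope $\Lambda_\psi=D+\Lip(\psi)+1$ in $\d(\cdot,Y)$, and this beats $\Lip(\psi)D+D^2/2$ once the shells are taken at a scale adapted to $\Lambda_\psi$; if necessary I would use shells of width proportional to $\Lambda_\psi^{-1}$. Summing over $i$ then gives a bound $O(1)$ uniformly for small $t$, and in fact exponentially small in $1/t$. The polynomial factors coming from the volume ratios are controlled by Bishop--Gromov, $\mm(B(y,r))\geq c\,r^N$ locally uniformly, and are absorbed by the exponential decay for $t$ small. Together with the $Y_1$ part, this yields $\E_{\mu_x^t}(\d^2(x,\cdot))\leq C(K,N,\Lambda_\psi)$, and hence the lemma.

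The main obstacle is the tail comparison just described: the bookkeeping of the exponents so that the linear penalty $\Lambda_\psi\,\d(\cdot,Y)$ dominates both the Gaussian loss on the denominator and the exponential volume growth. A secondary point is ensuring that the gradient estimate for $\log p_s$ holds on RCD spaces $\mm$-a.e. with the stated form. Here I would cite the Li--Yau inequality on RCD spaces (Garofalo--Mondino, Jiang) together with the Gaussian bounds.
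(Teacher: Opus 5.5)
Your route is genuinely different from the paper's and can be made to work, but the step you single out as the main obstacle is wrong as written. The quantity $(D+1)i-\Lip(\psi)D-D^2/(2-\epsilon)$ is not positive for all $i\ge 1$. For $i=1$ and $D\ge 3$ it is negative for every $\psi$, and using the sharper slope $\Lambda_\psi+\Lip(\psi)$ of $\psi_*$ off $Y$ does not help. Shells of width $\propto\Lambda_\psi^{-1}$ make this worse, not better: what matters is the absolute distance to $Y$, not how you slice it.

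The repair is to enlarge the near region to $\{\d(\cdot,Y)\le \Lambda_\psi\}$. There $\d(x,y)\le D+\Lambda_\psi\le 2\Lambda_\psi$ trivially. Beyond it the decay rate is at least $(D+1)\Lambda_\psi-\Lip(\psi)D-D^2=2D+\Lip(\psi)+1\ge 1$ (for $\epsilon\le 1$), so the far part is exponentially small in $1/t$. On the far part you also do not need pointwise kernel upper bounds or volume lower bounds; your ``$\mm(B(y,r))\ge c r^N$ locally uniformly'' is not uniform over far shells anyway. Stochastic completeness $\int_X p_{t/2}(x,y)\,\d\mm(y)=1$ suffices, as in Lemma~\ref{l0.20}. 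This gives $\E_{\mu_x^t[\psi_*]}(\d^2(x,\cdot))\le 4\Lambda_\psi^2+o(1)$, and your reduction then yields the lemma.

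For the pointwise bound, use the Hamilton route: Hamilton's estimate on $\mathrm{RCD}(K,N)$ spaces, combined with \eqref{hk0} and local doubling. The Li--Yau inequality alone only bounds $|\nabla\log p|^2$ by $\Delta p/p$, which is not pointwise controlled.

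The paper avoids pointwise gradient bounds and localization altogether. It integrates the Li--Yau inequality \eqref{Li-Yau} against $\mu_x^t[\psi_*]$ and uses symmetry to write $\Delta_x p_{t/2}(x,y)=\Delta_y p_{t/2}(x,y)$. It then integrates by parts in $y$ against $e^{\psi_*/t}$. Since $|\nabla\psi_*|\le 2\Lambda_\psi$, this bounds the $\mu_x^t$-average of $\Delta p/p$ by $\frac{2\Lambda_\psi}{t}\bar J_t^{1/2}$, where $\bar J_t$ is the $\mu_x^t$-average of $|\nabla_y\log p_{t/2}(x,\cdot)|^2$. Repeating this in the $y$ variable gives the self-improving inequality $\bar J_t\le e^{-Kt/3}\frac{2\Lambda_\psi}{t}\bar J_t^{1/2}+O(N/t)$, hence $\bar J_t,J_t\lesssim \Lambda_\psi^2/t^2$.

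In the paper's argument $\psi_*$ enters only through its Lipschitz constant. It needs neither Gaussian bounds nor Hamilton's estimate, and the smallness of $t$ depends only on $K$. Your argument costs more machinery: Hamilton's estimate, two-sided kernel bounds, and a Laplace-type tail analysis. Your threshold on $t$ also depends on $\mm(Y)$, on volumes of balls centred in $Y$ and on $\mathrm{osc}_Y\psi$, although the constant itself still depends only on $K,N,\Lambda_\psi$. In exchange, you get the extra and more transparent information that $\mu_x^t[\psi_*]$ has uniformly bounded second moment about $x$, i.e.\ it concentrates near $Y$.
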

	
	\begin{proof}
	Integrating the following Li--Yau type estimate  (cf. \cite[Theorem 1.2]{zbMATH06441788})
		\begin{equation}\label{Li-Yau}
			|\nabla_x \log p_\frac{t}{2}(x,y)|^2\leq e^{-\frac{Kt}{3}}\frac{\Delta_x p_\frac{t}{2}(x,y)}{p_\frac{t}{2}(x,y)}+\frac{NK}{3}\frac{e^{-\frac{2Kt}{3}}}{1-e^{-\frac{Kt}{3}}}
		\end{equation}
with respect to $\mu_x^t[\psi_*]$, we obtain
		\begin{equation}\label{0.12}
			J_t(x)\leq e^{-\frac{Kt}{3}}\int_{X} \frac{\Delta_x p_\frac{t}{2}(x,y)}{p_\frac{t}{2}(x,y)}\,\d \mu_x^t[\psi_*]+\frac{NK}{3}\frac{e^{-\frac{2Kt}{3}}}{1-e^{-\frac{Kt}{3}}}.
		\end{equation}
		 By symmetry of the heat kernel,    we have $$\int_{X} \Delta_x p_\frac{t}{2}(x,y) e^{\frac{\psi_*(y)}{t}}\,\d\mm(y)=\int_{X} \Delta_y p_\frac{t}{2}(x,y) e^{\frac{\psi_*(y)}{t}}\,\d\mm(y).$$ Then by integration by parts formula,
		\begin{equation}\label{0.14}
			\begin{aligned}
				\int_{X} \frac{\Delta_x p_\frac{t}{2}(x,y)}{p_\frac{t}{2}(x,y)}\,\d \mu_x^t[\psi_*](y)=&\frac{\int_{X} \Delta_x p_\frac{t}{2}(x,y) e^{\frac{\psi_*(y)}{t}}\,\d\mm(y)}{\int_{X} e^{\frac{\psi_*(y)}{t}} p_\frac{t}{2}(x,y)\,\d\mm(y)}\\
				=&\frac{\int_{X} \Delta_y p_\frac{t}{2}(x,y) e^{\frac{\psi_*(y)}{t}}\,\d\mm(y)}{\int_{X} e^{\frac{\psi_*(y)}{t}} p_\frac{t}{2}(x,y)\,\d\mm(y)}\\
				=&\frac{-\int_{X} \langle\nabla_y p_\frac{t}{2}(x,y), \nabla_y e^{\frac{\psi_*(y)}{t}}\rangle \,\d\mm(y)}{\int_{X} e^{\frac{\psi_*(y)}{t}} p_\frac{t}{2}(x,y)\,\d\mm(y)}\\
				\leq&\int_{X} |\nabla_y \log p_\frac{t}{2}(x,y)| \left| \frac{\nabla_y\bar \psi(y)-\Lambda_\psi\nabla_y  \d(y,Y)}{t}\right|\,\d\mu_x^t[\psi_*](y)\\
				\leq&\frac{2\Lambda_\psi}{t} \left( \underbrace{\int_X |\nabla_y \log p_\frac{t}{2}(x,y)|^2 \d \mu_x^t[\psi_*](y)}_{:=\tilde{J}_t(x)} \right)^\frac 12.
			\end{aligned}
		\end{equation}

		Similarly, we can  prove
		\begin{equation}\label{2.26}
		\bar{J}_t(x)
		\leq e^{-\frac{Kt}{3}}\frac{2\Lambda_\psi}{t}\bar{J}_t^\frac{1}{2}(x)+\frac{NK}{3}\frac{e^{-\frac{2Kt}{3}}}{1-e^{-\frac{Kt}{3}}}.
		\end{equation}
Since $1-e^{-\frac{Kt}{3}}=\frac{K}{3}t+o(t)$,  we have
			$\bar{J}_t(x)\lesssim \frac{1}{t^2}$. Then  \eqref{0.12} and \eqref{0.14} implies
		\begin{equation}
			\begin{aligned}
				J_t(x)\leq e^{-\frac{Kt}{3}}\frac{2\Lambda_\psi}{t}\bar{J}_t^\frac{1}{2}(x)+\frac{NK}{3}\frac{e^{-\frac{2Kt}{3}}}{1-e^{-\frac{Kt}{3}}}\leq \frac{C_1(K,N, \Lambda_\psi)}{t^2}.
			\end{aligned}
		\end{equation}

\end{proof}

	\begin{lemma}\label{prop3.8}
		Let $x_0\in S$, $0<r_0\leq1$ and $B=B(x_0,r_0)\subseteq S$. For $t>0$ small enough,  it holds
		\begin{equation}\label{2.24}
		 \int_B {\rm{Var}}_{ \mu_x^t[\psi_*]}\big(\nabla_x \log p_\frac{t}{2}(x,\cdot)\big) \,\d\rho(x) \leq \frac{C_2\mm(B(x_0,2r_0))}{r_0t},
		\end{equation}
		where $C_2$ depends on $K,N,\Lambda_\psi,a_2$.
		\end{lemma}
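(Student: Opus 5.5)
We will use Lemma \ref{Lemma3.6} to rewrite the variance as a difference of two Laplacian terms and treat them separately. Concretely, for $\mm$-a.e. $x\in B$,
\[
{\rm{Var}}_{ \mu_x^t[\psi_*]}\big(\nabla_x \log p_\frac{t}{2}(x,\cdot)\big)= \Delta_x \log H_\frac{t}{2}(e^\frac{\psi_*}{t})(x) - \mathbb{E}_{\mu_x^t[\psi_*]}\big(\Delta_x \log p_{\frac{t}{2}}(x, \cdot)\big).
\]
Since $\rho\le a_2\mm$ on $S$ and the left side is nonnegative, it suffices to bound $\int_B(\cdots)\,\d\mm$ and multiply by $a_2$. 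To handle the first term, which is a distributional Laplacian with no sign, we localize with a cutoff. Take a Lipschitz cutoff $\eta$ with $\eta\equiv1$ on $B(x_0,r_0)$, $\operatorname{supp}\eta\subset B(x_0,2r_0)$ and $|\nabla\eta|\le 1/r_0$. (On RCD spaces one may take a test-function cutoff with bounded Laplacian, but a Lipschitz cutoff suffices here because we only integrate by parts once.) Then
\[
\int_B {\rm Var}\,\d\mm\le\int\eta\,{\rm Var}\,\d\mm .
\]

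\textbf{First term.} We integrate by parts:
\[
\int\eta\,\Delta\log H_\frac t2(e^{\psi_*/t})\,\d\mm=-\int\langle\nabla\eta,\nabla\log H_\frac t2(e^{\psi_*/t})\rangle\,\d\mm .
\]
By Lemma \ref{grad},
\[
\nabla\log H_\frac t2(e^{\psi_*/t})(x)=\E_{\mu_x^t[\psi_*]}\big(\nabla_x\log p_\frac t2(x,\cdot)\big),
\]
and Cauchy--Schwarz together with Lemma \ref{p3.7} gives $|\cdot|\le J_t(x)^{1/2}\le C_1^{1/2}/t$. Hence this term is bounded by
\[
\frac{1}{r_0}\cdot\frac{C_1^{1/2}}{t}\,\mm(B(x_0,2r_0)),
\]
which already has the desired form.

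\textbf{Second term.} We need an upper bound on $-\E_{\mu_x^t}(\Delta_x\log p_\frac t2(x,\cdot))$, i.e. a lower bound on $\Delta_x\log p_\frac t2$. This is the main obstacle, since $\Delta_x \log p$ can be very negative, of order $-N/t$. We will use the same two identities as in the proof of Lemma \ref{p3.7}: the chain rule \eqref{De},
\[
-\Delta_x\log p=|\nabla_x\log p|^2-\frac{\Delta_x p}{p},
\]
and the symmetry/integration-by-parts computation \eqref{0.14}, which gives
\[
\Big|\E_{\mu_x^t}\Big(\frac{\Delta_x p}{p}\Big)\Big|\le\frac{2\Lambda_\psi}{t}\,\bar J_t^{1/2}\lesssim\frac{1}{t^2}.
\]
Together with $J_t\le C_1/t^2$ this yields only a pointwise bound of order $t^{-2}$, which is too weak by a factor $r_0/t$. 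So instead of a pointwise estimate we integrate against $\eta$ and use a dimension-type Li--Yau bound, namely the RCD$(K,N)$ bound $\Delta_x\log p_s\ge -\frac{N}{2s}-C(K,N)$ (from \cite{zbMATH06441788}, the Hamilton/Li--Yau type estimate). With $s=t/2$ this gives
\[
-\E_{\mu_x^t}\big(\Delta_x\log p_\frac t2(x,\cdot)\big)\le\frac{N}{t}+C(K,N).
\]
Since $r_0\le1$ and we may take $t\le 1$, we have
\[
\Big(\frac Nt+C\Big)\mm(B(x_0,2r_0))\le\frac{C'}{r_0t}\,\mm(B(x_0,2r_0)).
\]

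Combining both terms with the factor $a_2$ gives \eqref{2.24}, with $C_2$ depending on $K,N,\Lambda_\psi,a_2$. If the Laplacian lower bound is not available in exactly this form on RCD spaces, the fallback is to bound $-\Delta\log p$ through the Li--Yau inequality \eqref{Li-Yau} in its Laplacian form: rearranged, \eqref{Li-Yau} gives
\[
\Delta\log p\ge -\big(1-e^{-Kt/3}\big)|\nabla\log p|^2-O(1/t),
\]
and the $|\nabla\log p|^2$ term then contributes $O(t)\cdot J_t=O(1/t)$, which gives the same conclusion.
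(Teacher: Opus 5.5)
\paragraph{Verdict.} The cutoff, the splitting via Lemma~\ref{Lemma3.6}, and your treatment of the first term (integration by parts, Lemma~\ref{grad}, Cauchy--Schwarz, Lemma~\ref{p3.7}) match the paper's proof. Your fallback for the second term is also correct and is essentially the paper's argument. Your primary route for the second term, however, rests on an inequality that is false in the relevant case $K<0$, so the proof should be built on the fallback.

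\paragraph{The primary route fails for $K<0$.} The distance-independent bound $\Delta_x\log p_s\ge-\frac{N}{2s}-C(K,N)$ is the Li--Yau bound for $K\ge0$. It does not hold in negative curvature. In $\mathbb{H}^3$, which is ${\rm RCD}(-2,3)$, one has $p_s(x,y)=(4\pi s)^{-3/2}\frac{r}{\sinh r}e^{-s-r^2/(4s)}$ with $r=\d(x,y)$. A direct radial computation gives
\[
\Delta_x\log p_s(x,y)=-\frac{1+2r\coth r}{2s}+O(1),
\]
where the $O(1)$ term is bounded uniformly in $r$ and $s$. The coefficient $1+2r\coth r$ exceeds $3$ for every $r>0$, and exceeds any given $N$ once $r$ is large. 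Hence the bound fails for small $s$, whatever $C$ is. The averaged quantity is still $O(1/t)$, but only with a constant depending on the distances charged by $\mu_x^t[\psi_*]$. This is why $C_2$ depends on $\Lambda_\psi$; a pointwise bound with a constant depending only on $(K,N)$ cannot give it.

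\paragraph{The fallback, with the sign corrected.} Rearranging \eqref{Li-Yau} correctly gives
\[
\Delta_x\log p_{\frac t2}\ge\big(e^{\frac{Kt}{3}}-1\big)|\nabla_x\log p_{\frac t2}|^2-\frac{NK}{3}\,\frac{e^{-\frac{Kt}{3}}}{1-e^{-\frac{Kt}{3}}}.
\]
The exponent is $+Kt/3$. With your $-(1-e^{-Kt/3})$ the coefficient would be positive for $K<0$, which does not follow from \eqref{Li-Yau}.

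\begin{itemize}
\item The last term is at most $C(K,N)/t$ for small $t$.
\item For $K\ge0$ the gradient term is nonnegative and can be dropped.
\item For $K<0$ its coefficient is $-(1-e^{Kt/3})=O(|K|t)$.
\end{itemize}

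Integrating against $\mu_x^t[\psi_*]$ and using $J_t\le C_1/t^2$ then gives $-\E_{\mu_x^t[\psi_*]}\big(\Delta_x\log p_{\frac t2}(x,\cdot)\big)\le c_1/t$. Hence $I_2\le c_1\mm(B(x_0,2r_0))/t\le c_1\mm(B(x_0,2r_0))/(r_0t)$.

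\paragraph{Comparison with the paper.} The paper places the $O(t)$ factor on $\E(\Delta_xp/p)$ instead, writing
\[
-\E(\Delta_x\log p)\le(e^{-Kt/3}-1)\E(\Delta_xp/p)+\frac{NK}{3}\frac{e^{-2Kt/3}}{1-e^{-Kt/3}},
\]
and bounds $|\E(\Delta_xp/p)|$ by \eqref{0.14} and \eqref{2.26}. Lemma~\ref{p3.7} was itself derived from \eqref{0.14}, so the two bookkeepings are equivalent. Discard the primary route and present the corrected fallback as the proof.
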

		\begin{proof}
		Take a cut-off function $\eta$ satisfying $\eta \equiv 1$ on $B$, $\eta \equiv 0$ on $X\setminus B(x_0,2r_0)$, $0 \le \eta \le 1$, and $|\nabla \eta| \leq \frac{10}{r_0}$. Recall that $a_1\mm\restr{S} \leq \rho\leq a_2\mm\restr{S}$, by Lemma \ref{Lemma3.6}, we have  
		\begin{equation}\label{J_t^cen}
			\begin{aligned}
			&\int_B {\rm{Var}}_{ \mu_x^t[\psi_*]}\big(\nabla_x \log p_\frac{t}{2}(x,\cdot)\big)  \,\d\rho(x)\\
			  \le & a_2 \int_X \eta(x) {\rm{Var}}_{ \mu_x^t[\psi_*]}\big(\nabla_x \log p_\frac{t}{2}(x,\cdot)\big)  \,\d\mm(x) \\
			 = &a_2\left(\int_X \eta(x) \Delta_x \log \left(H_\frac{t}{2}(e^\frac{\psi_*}{t})(x)\right) \,\d\mm(x)
			- \int_X \eta(x) \mathbb{E}_{\mu_x^t[\psi_*]} \big(\Delta_x \log p_{\frac{t}{2}}(x, \cdot)\big) \,\d\mm(x)\right) \\
			 := & a_2(I_1 + I_2).
		\end{aligned}
		\end{equation}
		
		For $I_1$,  by integration by parts formula and H\"older inequality, 
		\begin{equation}
			\begin{aligned}
			I_1
			&=-\int_X \langle\nabla\eta, \nabla \log \big(H_\frac{t}{2}(e^\frac{\psi_*}{t})\big)\rangle\,\d\mm\\
			&\leq\left( \int_{B(x_0,2r_0)\setminus B}|\nabla \eta|^2\,\d\mm\right)^\frac{1}{2}\left( \int_{B(x_0,2r_0)\setminus B} \left| \nabla\log \big(H_\frac{t}{2}(e^\frac{\psi_*}{t})\big)\right|^2\,\d\mm\right)^\frac{1}{2}.
			\end{aligned}
		\end{equation}
By Lemma \ref{grad}, it holds
		\begin{equation}
			\left|\nabla\log \big(H_\frac{t}{2}(e^\frac{\psi_*}{t})\big)\right|(x)\leq \int_X |\nabla_x \log p_\frac{t}{2}(x,y)| \,\d \mu_x^t[\psi_*](y).
		\end{equation}
		  Combining with Lemma \ref{p3.7}, we get
		\begin{equation}\label{3.37}
		I_1\leq \frac{10\sqrt{C_1}\mm(B(x_0,2r_0))}{r_0t}.
		\end{equation}
		\medskip
		
		For $I_2$, by \eqref{De} and the Li--Yau type estimate \eqref{Li-Yau}, we have
		\begin{equation}
			\begin{aligned}
				-\mathbb{E}_{\mu_x^t[\psi_*]}(\Delta_x \log p_{\frac{t}{2}}(x, \cdot))&=\mathbb{E}_{\mu_x^t[\psi_*]}\left(|\nabla_x \log p_{\frac{t}{2}}(x,\cdot)|^2-\frac{\Delta_x p_{\frac{t}{2}}(x,\cdot)}{p_{\frac{t}{2}}(x,\cdot)}\right)\\
				&\leq (e^{-\frac{Kt}{3}}-1)\int_X \frac{\Delta_x p_\frac{t}{2}(x,y)}{p_\frac{t}{2}(x,y)}\,\d \mu_x^t[\psi_*](y)+\frac{NK}{3}\frac{e^{-\frac{2Kt}{3}}}{1-e^{-\frac{Kt}{3}}}.
			\end{aligned}
		\end{equation}
		By \eqref{0.14} and \eqref{2.26}, for $t$ small enough, there is   $c_1=c_1(K, N, \Lambda_\psi)$ so that
		\begin{equation*}
			-\mathbb{E}_{\mu_x^t[\psi_*]}(\Delta_x \log p_{\frac{t}{2}}(x, y))\leq \left|e^{-\frac{Kt}{3}}-1\right|\frac{2\Lambda_\psi}{t}\bar{J}_t^\frac{1}{2}(x)+\frac{NK}{3}\frac{e^{-\frac{2Kt}{3}}}{1-e^{-\frac{Kt}{3}}}\leq \frac{c_1}{t}.
		\end{equation*}
So
				$I_2
			\leq  \frac{c_1\mm(B(x_0,2r_0))}{t}$.
Combining with \eqref{J_t^cen}, \eqref{3.37}, we obtain \eqref{2.24}
		\end{proof}
		
		\medskip
		
		\begin{proposition}\label{3.9}
		It holds that 
		\begin{equation}\label{231}
		\int_B 	|\nabla_x \E_{\mu_x^t[\psi_*]}(v)|\,\d\rho(x)\leq \frac{\sqrt{C_2\mm(B(x_0,2r_0))}}{\sqrt{r_0
		t}}\left(\int_B {\rm{Var}}_{ \mu_x^t[\psi_*]}(v)\,\d\rho(x)\right)^\frac{1}{2}.
		\end{equation}

		\end{proposition}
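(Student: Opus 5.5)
The plan is to combine the pointwise estimate of Lemma \ref{prop0.5} with the integrated variance bound of Lemma \ref{prop3.8} through a single application of the Cauchy--Schwarz inequality with respect to $\rho$ on $B$. Throughout, $B=B(x_0,r_0)\subseteq S$, $0<r_0\le 1$, and $t$ is small enough that Lemma \ref{prop3.8} applies. By Lemma \ref{Lemma3.3}, $x\mapsto \E_{\mu_x^t[\psi_*]}(v)$ lies in $W^{1,2}$, so its minimal weak upper gradient is defined $\mm$-a.e. It is therefore also defined $\rho$-a.e. on $B$, since $\rho\le a_2\mm\restr{S}$.

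\textbf{Step 1 (pointwise bound).} Take square roots in \eqref{2.14} of Lemma \ref{prop0.5}. This gives, for $\mm$-a.e. (hence $\rho$-a.e.) $x\in B$,
\begin{equation*}
|\nabla_x \E_{\mu_x^t[\psi_*]}(v)|\le {\rm Var}_{\mu_x^t[\psi_*]}(v)^{\frac12}\,{\rm Var}_{\mu_x^t[\psi_*]}\big(\nabla_x\log p_{\frac t2}(x,\cdot)\big)^{\frac12}.
\end{equation*}

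\textbf{Step 2 (Cauchy--Schwarz).} Integrate this over $B$ against $\rho$ and apply the Cauchy--Schwarz inequality in $L^2(B,\rho)$:
\begin{equation*}
\int_B |\nabla_x \E_{\mu_x^t[\psi_*]}(v)|\,\d\rho\le \Big(\int_B {\rm Var}_{\mu_x^t[\psi_*]}(v)\,\d\rho\Big)^{\frac12}\Big(\int_B {\rm Var}_{\mu_x^t[\psi_*]}\big(\nabla_x\log p_{\frac t2}(x,\cdot)\big)\,\d\rho\Big)^{\frac12}.
\end{equation*}

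\textbf{Step 3 (insert Lemma \ref{prop3.8}).} Bound the second factor using \eqref{2.24}, which gives at most $\big(C_2\mm(B(x_0,2r_0))/(r_0t)\big)^{1/2}$. This is exactly \eqref{231}.

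The only point needing care is measurability and integrability of the two variance functions on $B$, so that the Cauchy--Schwarz step is legitimate. Both are nonnegative measurable functions of $x$. The gradient variance is finite a.e. by Lemma \ref{p3.7}, and $v$ is Lipschitz with Gaussian-concentrated weights, so $v$ has finite variance. If either integral were infinite, the inequality would hold trivially. I therefore do not expect a genuine obstacle; the substantive work was already done in Lemmas \ref{prop0.5} and \ref{prop3.8}.
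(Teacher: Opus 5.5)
Your proposal is correct and is exactly the paper's argument: take square roots in \eqref{2.14} of Lemma \ref{prop0.5}, apply H\"older (Cauchy--Schwarz) in $L^2(B,\rho)$, and bound the gradient-variance factor by \eqref{2.24} of Lemma \ref{prop3.8}. Your remark that the $\mm$-a.e.\ bound holds $\rho$-a.e.\ because $\rho\le a_2\mm\restr{S}$ is a detail the paper leaves implicit.
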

		\begin{proof}
	It follows from Lemma \ref{prop0.5}, Lemma \ref{prop3.8}, and H\"{o}lder inequality.
		\end{proof}

	\subsection{Global concavity estimate}
	We  improve  the estimate \eqref{231} in Proposition\ref{3.9} to a global one. The proof is similar to  \cite[Lemma 3.3]{arXiv:2411.04908},  which involves two key lemmas in Appendix \ref{A}.

	\begin{proposition}\label{2.11}
	It holds that
	\begin{equation}
	\int_{S}\big| \E_{\mu_x^t[\psi_*]}(v)-\E_{\mu^t[\psi_*]}(v))\big|\,\d\rho(x)\leq  \frac{\kappa}{\sqrt{t}}\left(\int_S {\rm{Var}}_{ \mu_x^t[\psi_*]}(v)\,\d\rho(x)\right)^\frac{1}{2},
	\end{equation}
	where $\mu^t[\psi_*]$ is defined as in \eqref{mu} and $\kappa$ depends on $K,N,\Lambda_\psi,a_1,a_2, S$.
	\end{proposition}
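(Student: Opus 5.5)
Set $f(x):=\E_{\mu_x^t[\psi_*]}(v)$. By Lemma \ref{Lemma3.3}, $f\in W^{1,2}(X,\d,\mm)$. By \eqref{mu}, $\E_{\mu^t[\psi_*]}(v)=\int_S f\,\d\rho$ is the $\rho$-mean of $f$ on $S$. The left-hand side of the proposition is therefore an $L^1(\rho)$ oscillation of $f$ around its mean. The plan is to control this oscillation by a global $(1,1)$-Poincaré inequality on the John domain $S$, with weights $\rho$, and then feed in the local gradient bound from Proposition \ref{3.9}.

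The first step is a reduction to $\mm$. Since $a_1\mm\restr{S}\le\rho\le a_2\mm\restr{S}$, we have
\[\int_S|f-\E_\rho f|\,\d\rho\le 2\inf_{c\in\R}\int_S|f-c|\,\d\rho\le 2a_2\int_S|f-f_S|\,\d\mm,\]
where $f_S$ is the $\mm$-average of $f$ on $S$.

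The second step is a Boman chain argument, which is the content I expect from the appendix lemmas. Fix a Whitney-type covering $\mathcal F$ of $S$ by balls $B=B(x_B,r_B)$ with $B(x_B,2r_B)\subseteq S$ (shrinking by a fixed factor if needed) and bounded overlap. The John condition produces, for each $B\in\mathcal F$, a chain of balls linking $B$ to a central ball $B_0$ with radii comparable to distance from $\partial S$. Each ball in the chain contains $B$ up to a fixed dilation. The RCD$(K,N)$ space is locally doubling with a local $(1,1)$-Poincaré inequality (Bishop--Gromov plus Rajala's Poincaré). On each ball these give
\[\int_B|f-f_B|\,\d\mm\le C r_B\int_{\lambda B}|\nabla f|\,\d\mm.\]
Boman's lemma then yields the global inequality
\[\int_S|f-f_S|\,\d\mm\le C(S,K,N)\sum_{B\in\mathcal F}\int_B|\nabla f|\,\d\mm,\]
where the factor $r_B\le 1$ can be discarded. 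The same conclusion follows from the appendix result that John domains in such spaces support a $(1,1)$-Poincaré inequality.

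The third step inserts Proposition \ref{3.9} ball by ball, using $\rho\ge a_1\mm$ on $S$:
\[\int_B|\nabla f|\,\d\mm\le \frac{1}{a_1}\sqrt{\frac{C_2\,\mm(2B)}{r_Bt}}\Big(\int_B{\rm Var}_{\mu_x^t[\psi_*]}(v)\,\d\rho\Big)^{1/2}.\]
Summing over $B$ and applying Cauchy--Schwarz gives a bound of the form
\[\Big(\sum_B\frac{\mm(2B)}{r_B}\Big)^{1/2}\Big(\sum_B\int_B{\rm Var}_{\mu_x^t[\psi_*]}(v)\,\d\rho\Big)^{1/2}.\]
Bounded overlap controls the second factor by $\int_S{\rm Var}_{\mu_x^t[\psi_*]}(v)\,\d\rho$.

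The main obstacle is the first factor $\sum_B\mm(2B)/r_B$, which diverges for an infinite Whitney covering. This term is what the $\kappa(S)$ dependence must absorb. I would try two fixes. The first is to carry the weight $r_B$ from the Poincaré inequality into Cauchy--Schwarz, which gives $\sum_B r_B\,\mm(2B)\lesssim\mm(S)$ instead. The second is to use a Poincaré inequality on $S$ in which the gradient is weighted by $\d(\cdot,\partial S)^{1/2}$, which Boman chains for John domains provide, as in \cite[Lemma 3.3]{arXiv:2411.04908}. Either route makes the constant finite and dependent only on $S$, $K$, $N$, $a_1$, $a_2$ and $C_2$ (through $\Lambda_\psi$), which yields $\kappa/\sqrt t$.
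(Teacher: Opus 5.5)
Once you commit to your first fix (i.e.\ you do not discard $r_B$ in the second step, so that the local Poincar\'e factor $r_B$ combines with the $r_B^{-1/2}$ from Proposition~\ref{3.9} into $r_B^{1/2}\le 1$), your argument is essentially the paper's proof. That proof glues local oscillations along Boman chains (Lemma~\ref{3.1}), applies the strong local $(1,1)$-Poincar\'e inequality (Lemma~\ref{p3.10}) and Proposition~\ref{3.9} on each ball, and closes with Cauchy--Schwarz, using $\sum_{B}\mm(2B)\le E\,\mm(S)$ and bounded overlap for the variance sum. The differences are cosmetic: the paper works with $\rho$ directly and uses the undilated strong Poincar\'e inequality, rather than first reducing to $\mm$ and passing through dilated balls $\lambda B$.
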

	\begin{proof}
		Since $a_1\mm\restr{S} \leq \rho\leq a_2\mm\restr{S}$, by \cite[Corollary 2.4]{zbMATH05049052}, $\rho$ is a doubling measure. Moreover, 
		since $S$ is a John domain, by \cite[Proposition 3.7]{arXiv:2504.05412}, $\rho$ satisfies the Boman chain condition (see Definition \ref{Boman chain condition}) and we can choose a covering $\mathcal{F}$, such that for any $B\in \mathcal{F}$, $r_B\leq 1$. Then 
		\begin{equation}
			\begin{aligned}
				&\int_{S}\big| \E_{\mu_x^t[\psi_*]}(v)-\E_{\mu^t[\psi_*]}(v))\big|\,\d\rho(x)\\
				\mathop{\leq}^{\text {Lemma \ref{3.1}}} &C_4 \sum_{B\in\mathcal{F}}\rho(B)\int_B \big| \E_{\mu_x^t[\psi_*]}(v)-{\rho(B)}^{-1}\E_{\mu^t[\psi_*]}(v))\big|\,\d\rho_B(x)\\
				\mathop{\leq }^{\text{Lemma \ref{p3.10}}}&C_4 \sum_{B\in\mathcal{F}}\rho(B)C_3 r_B\int_B \big|\nabla_x \E_{\mu_x^t[\psi_*]}(v)\big|\,\d\rho_B(x)\\
			\mathop{\leq}^{\text{Proposition \ref{3.9}} }&  \frac{\sqrt{C_2}C_3C_4}{\sqrt{t}}\sum_{B\in\mathcal{F}}\sqrt{\mm(B_{2r_B})}\sqrt{r_B}\left(\int_B {\rm{Var}}_{ \mu_x^t[\psi_*]}(v)\,\d\rho(x)\right)^\frac{1}{2}\\
				\mathop{\leq}^{\text{H\"{o}lder}~~}&\frac{\sqrt{C_2}C_3C_4}{\sqrt{t}}\left(\sum_{B\in\mathcal{F}}\mm(B_{2r_B})\right)^\frac{1}{2}\left(\sum_{B\in\mathcal{F}}\int_B {\rm{Var}}_{ \mu_x^t[\psi_*]}(v)\,\d\rho(x)\right)^\frac{1}{2}\\
					\overset{\text{Boman chain~}}{\leq}&\frac{\kappa}{\sqrt{t}}\left(\int_S {\rm{Var}}_{ \mu_x^t[\psi_*]}(v)\,\d\rho(x)\right)^\frac{1}{2},
			\end{aligned}
		\end{equation}
		which is the thesis.
	\end{proof}


	\subsection{Stability estimate for Kantorovich functionals}

	\begin{proposition}\label{0.30}
	Let $\varphi, \psi\in \Lip(Y, \d)$ and $t>0$.  Then
		\begin{equation}
			\begin{aligned}
				&\int_{S}\left|\Big(\Phi_t[\varphi_{*}](x)-\K_t[\varphi_{*}]\Big)-\Big(\Phi_t(\psi_{*})(x)-\K_t[\psi_{*}]\Big)\right|\,\d\rho(x)\\
				\leq &C_5 \left| \E_{\mu^t[\varphi_{*}]-\mu^t[\psi_{*}]}(\varphi_{*}-\psi_{*})\right|^\frac{1}{2},
			\end{aligned}
		\end{equation}
		where $C_5$ depends on $K,N,a_1,a_2,\diam(S\cup Y),S,  \Lip(\varphi),  \Lip(\psi)$.
	\end{proposition}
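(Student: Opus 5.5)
We interpolate between $\psi_*$ and $\varphi_*$ along the segment $\varphi_s:=\psi_*+s v$, $s\in[0,1]$, with $v:=\varphi_*-\psi_*\in\Lip(X,\d)$. This follows the approach of \cite[Proposition 3.6]{arXiv:2504.05412}.

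\emph{Step 1: rewrite both sides along the segment.} Set $g_s(x):=\Phi_t[\varphi_s](x)-\K_t[\varphi_s]$. Differentiating under the integral as in Lemma \ref{lemma0.3} gives
\[
\frac{\d}{\d s}g_s(x)=-\big(\E_{\mu_x^t[\varphi_s]}(v)-\E_{\mu^t[\varphi_s]}(v)\big).
\]
Integrating in $s$ and using Fubini, the left-hand side of the claim is at most
\[
\int_0^1\int_S\big|\E_{\mu_x^t[\varphi_s]}(v)-\E_{\mu^t[\varphi_s]}(v)\big|\,\d\rho(x)\,\d s .
\]
On the other side, the map $s\mapsto \K_t[\varphi_s]$ has derivative $-\E_{\mu^t[\varphi_s]}(v)$ and second derivative $-\frac1t\int_S{\rm Var}_{\mu_x^t[\varphi_s]}(v)\,\d\rho$. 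Hence
\[
\E_{\mu^t[\varphi_*]-\mu^t[\psi_*]}(v)=\frac1t\int_0^1\int_S{\rm Var}_{\mu_x^t[\varphi_s]}(v)\,\d\rho(x)\,\d s\ \ge 0,
\]
so the absolute value on the right-hand side of the claim is harmless.

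\emph{Step 2: apply the global concavity estimate at each $s$.} I would apply Proposition \ref{2.11} with $\psi_*$ replaced by $\varphi_s$. This gives
\[
\int_S|\cdots|\,\d\rho\le \frac{\kappa}{\sqrt t}\Big(\int_S{\rm Var}_{\mu_x^t[\varphi_s]}(v)\,\d\rho\Big)^{1/2}.
\]
Integrating in $s$, applying Jensen/Cauchy--Schwarz in $s$, and using Step 1, the left-hand side of the claim is bounded by
\[
\frac{\kappa}{\sqrt t}\Big(t\,\E_{\mu^t[\varphi_*]-\mu^t[\psi_*]}(v)\Big)^{1/2}=\kappa\,\big|\E_{\mu^t[\varphi_*]-\mu^t[\psi_*]}(\varphi_*-\psi_*)\big|^{1/2}.
\]
The factors of $t$ cancel, so we take $C_5=\kappa$.

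\emph{Main obstacle.} The difficulty is that Proposition \ref{2.11} and the lemmas behind it are stated for functions of the special form $\psi_*=\bar\psi-\Lambda_\psi\d(\cdot,Y)$. The interpolant $\varphi_s=(1-s)\psi_*+s\varphi_*$ is not literally of this form. I would check that the proofs of Lemmas \ref{Lemma3.3}--\ref{prop3.8} use only two properties of $\psi_*$:
\begin{itemize}
\item the upper bound $\psi_*\le \sup_Y\psi-(D+1)\d(\cdot,Y)$, which gives integrability;
\item the Lipschitz bound $|\nabla\psi_*|\le 2\Lambda_\psi$, used in \eqref{0.14}.
\end{itemize}
Both properties survive convex combination. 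Writing $\varphi_s=(1-s)\bar\psi+s\bar\varphi-((1-s)\Lambda_\psi+s\Lambda_\varphi)\d(\cdot,Y)$, the linear part is Lipschitz with constant at most $\max(\Lip\psi,\Lip\varphi)$. The distance coefficient is at least $D+1+\max$-type bounds, so the decay beats the volume growth. Its Lipschitz constant is at most $2\max(\Lambda_\varphi,\Lambda_\psi)$.

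Consequently all the constants $C_1,C_2,\kappa$ can be taken uniform in $s$, depending only on $K,N,a_1,a_2,D,S,\Lip\varphi,\Lip\psi$. The smallness condition on $t$ from those lemmas is likewise uniform. For large $t$, the left-hand side is trivially bounded since the potentials are Lipschitz on a bounded set. The right-hand side, however, may be small, so I would expect the statement to be used only for $t$ small and simply assume $t$ small throughout. Justifying differentiation under the integral for $\varphi_s$ is identical to Lemma \ref{lemma0.3}.
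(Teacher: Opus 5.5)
Your proposal is correct and is essentially the paper's proof. You interpolate $\phi_s=\psi_*+s(\varphi_*-\psi_*)$, bound $\int_0^1\int_S|\E_{\mu_x^t[\phi_s]}(v)-\E_{\mu^t[\phi_s]}(v)|\,\d\rho\,\d s$ by Proposition \ref{2.11} plus Cauchy--Schwarz in $s$, and identify $\frac1t\int_0^1\int_S{\rm Var}_{\mu_x^t[\phi_s]}(v)\,\d\rho\,\d s$ with $\E_{\mu^t[\varphi_*]-\mu^t[\psi_*]}(v)$ via Lemma \ref{lemma0.3}. Your extra checks make explicit what the paper uses silently: the interpolants, which are not literally of the form $\psi_*$, keep the decay $\phi_s\le C-(D+1)\d(\cdot,Y)$, positivity on $Y$, and the gradient bound $2\max(\Lambda_\varphi,\Lambda_\psi)$, so $\kappa$ is uniform in $s$; and the estimate is only available for small $t$.
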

	
	\begin{proof}

		Denote $v=\varphi_{*}-\psi_{*}$,  and $ \phi_{s}=\psi_{*}+sv$ for $0\leq s\leq1$. 
		By Lemma \ref{lemma0.3}, 
		\begin{equation}
			\begin{aligned}
				\frac{\d}{\d s}\Big({\Phi}_t[\phi_{s}]-\K_t[\phi_{s}]\Big)=&-\left(\E_{\mu_x^t[\phi_{s}]}(v)-\E_{\mu^t[\phi_{s}]}(v) \right).
			\end{aligned}
		\end{equation}
		
		By Proposition \ref{2.11},  we have  
	    \begin{equation*}
	    	\begin{aligned}
	    		&\int_{S}\left|\Big(\Phi_t[\varphi_{*}](x)-\K_t[\varphi_{*}]\Big)-\Big(\Phi_t[\psi_{*}](x)-\K_t[\psi_{*}]\Big)\right|\,\d\rho(x)\\
	    		\leq &\int_0^1\int_S \left| \frac{\d}{\d s}\Big({\Phi}_t[\phi_{s}]-\K_t[\phi_{s}]\Big)\right|\,\d\rho(x)\,\d s
	    		= \int_0^1\int_S \left|\E_{\mu_x^t[\phi_{s}]}(v)-\E_{\mu^t[\phi_{s}]}(v) \right|\,\d\rho(x)\,\d s\\
	    		\overset{*}{\leq} &\kappa\left(-\int_0^1\langle D^2{\K}_t[\phi_{s}]v, v\rangle\,\d s\right)^\frac{1}{2}\\
	    		= &\kappa\left|\langle \nabla {\K}_t[\varphi_{*}]-\nabla {\K}_t[\psi_{*}], \varphi_{*}-\psi_{*}\rangle\right|^\frac{1}{2}
	    		=\kappa\left| \E_{\mu^t[\varphi_{*}]-\mu^t[\psi_{*}]}(\varphi_{*}-\psi_{*})\right|^\frac{1}{2},
	    	\end{aligned}
	    \end{equation*}
	       where $(*)$ follows from Proposition \ref{2.11},  Lemma \ref{lemma0.3} and H\"older inequality, while the last two equalities follow from  Lemma \ref{lemma0.3}.  This completes the proof.
	\end{proof}
	
	\subsection{Passing to the limit}
	In the following lemma we pass to the limit of $t$ to recover the  Kantorovich potentials.   We remark that,  for $\psi\in C_b(X)$,  this asymptotic formula has been proved by Gigli--Tamanini--Trevisan \cite[Proposition 5.2]{GTT25} in $\rcd$ spaces.
	\begin{lemma}\label{l0.20}
		For any $\psi\in \Lip(Y, \d)$,  we have 
		\begin{equation}\label{2.38}
			\lim_{t\rightarrow0}{\Phi}_t[\psi_{*}](x)=\lim_{t\rightarrow0}-t\log \int_X e^{\frac{\psi_*(y)}{t}} p_\frac{t}{2}(x,y)\,\d\mm(y)=\psi^c(x),
		\end{equation}
		for $\rho$-a.e. $x\in S$, where $\psi^c(x)=\inf_{y\in Y}\{c(x,y)-\psi(y)\}$.   
	\end{lemma}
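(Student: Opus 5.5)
We prove \eqref{2.38} by a Laplace-principle argument: an upper and a lower bound for $\Phi_t[\psi_*](x)$, both tending to $\psi^c(x)$. Here $c(x,y)=\frac12\d^2(x,y)$. The two key features are that $\psi_*=\psi$ on $Y$ and that $\psi_*$ decays like $-(D+1)\d(\cdot,Y)$ away from $Y$, since $\Lambda_\psi\ge D+\Lip(\psi)+1$. We fix $x\in S$ and $\epsilon>0$ throughout.

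\emph{Upper bound} (i.e.\ $\limsup_{t\to0}\Phi_t[\psi_*](x)\le\psi^c(x)$). Since $Y$ is compact and $\psi$ is continuous, pick $y_0\in Y$ attaining the infimum in $\psi^c(x)$. Restrict the integral to $B(y_0,\delta)\cap Y$, where $\psi_*=\psi\ge\psi(y_0)-\Lip(\psi)\delta$. By the lower heat kernel bound \eqref{hk0}, for $y$ in this set,
\[
-t\log p_{t/2}(x,y)\le \tfrac{2}{4-\epsilon}(\d(x,y_0)+\delta)^2+o(1),
\]
uniformly, as in Lemma~\ref{lemma0.2}. We then need
\[
t\log\mm\bigl(B(y_0,\delta)\cap Y\bigr)\to0 .
\]
This is the delicate point: it requires $\mm(B(y_0,\delta)\cap Y)>0$ for $y_0\in Y$. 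It holds for $\mm$-a.e.\ $y_0$ only if $Y$ has positive density near $y_0$, so in general we would replace $Y$ by $\operatorname{supp}(\mm\restr{Y})$, or use that the infimum defining $\psi^c$ can be approximated by points of density of $Y$. I expect this to be the main obstacle, and I would first try to show that the infimum over $Y$ coincides with the essential infimum (using continuity of $\psi$ and $\mm(Y)>0$), possibly after assuming $Y=\overline{\mathrm{int}\,Y}$ or passing to the support. With this, letting $t\to0$ and then $\delta,\epsilon\to0$ gives the upper bound.

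\emph{Lower bound} (i.e.\ $\liminf_{t\to0}\Phi_t[\psi_*](x)\ge\psi^c(x)$). Split $X=Y_\delta\cup(X\setminus Y_\delta)$, where $Y_\delta$ is the $\delta$-neighbourhood of $Y$.

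On $Y_\delta$, write $y\in Y_\delta$ with nearest point $z\in Y$. Then $\psi_*(y)\le\psi(z)+\Lip(\psi)\delta$, and by the upper bound in \eqref{hk0},
\[
-t\log p_{t/2}(x,y)\ge\tfrac{2}{4+\epsilon}(\d(x,z)-\delta)^2-o(1).
\]
Together with $\mm(Y_\delta)<\infty$, this gives
\[
-t\log\int_{Y_\delta}\cdots\ \ge\ \tfrac{2}{4+\epsilon}\inf_z\bigl\{(\d(x,z)-\delta)^2-\psi(z)\bigr\}-C\delta-o(1).
\]

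On the far part $X\setminus Y_\delta$, decompose into shells $B_i$ as in the proof of Lemma~\ref{Lemma3.3}. There $\psi_*(y)\le\sup_Y\psi-(D+1)\d(y,Y)$, the heat kernel is bounded above by $C t^{-N/2}\,e^{\text{poly}}$ via the Bishop--Gromov lower volume bound, and the volume growth is $e^{\sqrt{(N-1)|K|}\,i}$. The exponent $-(D+1)\d(y,Y)/t$ then dominates, and the exponent $\frac{1}{t}\bigl(\psi_*(y)-\frac12\d^2(x,y)\bigr)$ is bounded by $\frac1t\bigl(\sup_Y\psi-\frac{D+1}{2}\delta\bigr)$ for $\d(y,Y)\ge\delta$. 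We need this to be at most $-\psi^c(x)-\eta$ for some $\eta>0$; this follows from $\psi^c(x)\le\frac12D^2-\psi(z)$ and the choice of $\Lambda_\psi$, possibly by enlarging the decay constant. Hence the far contribution is negligible at the exponential scale, being $e^{-(\psi^c(x)+\eta)/t}$ times a summable series.

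Finally, combine the two parts using
\[
-t\log(a+b)\ge\min(-t\log a,\,-t\log b)-t\log2,
\]
and let $t\to0$, then $\delta,\epsilon\to0$. Continuity of $\psi$ on compact $Y$ then yields the lower bound, and together with the upper bound this proves \eqref{2.38}.
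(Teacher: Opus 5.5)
Your architecture (split $X$ into $Y_\delta$ and its complement, show the far part is exponentially negligible, then run Laplace-type bounds on the near part) is the paper's. As written, however, each of the two bounds has a gap.

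\emph{Upper bound.} The obstacle you flag is self-inflicted, and the remedies you suggest do not work. First, $\inf_Y$ need not equal an $\mm$-essential infimum: the minimizer of $c(x,\cdot)-\psi$ can sit at an isolated point of $Y$, while $\mm(Y)>0$ comes from another piece. Second, replacing $Y$ by $\operatorname{supp}(\mm\restr{Y})$ changes $\psi^c$, so you would be proving a different statement. There is no need to intersect with $Y$. The function $\psi_*$ is $(\Lip(\psi)+\Lambda_\psi)$-Lipschitz on all of $X$ and equals $\psi$ on $Y$. So on the full ball $B(y_0,r)\subset X$ you have $\psi_*\ge\psi(y_0)-(\Lip(\psi)+\Lambda_\psi)r$. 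Moreover $\mm(B(y_0,r))>0$ because $\operatorname{supp}\mm=X$, hence $t\log\mm(B(y_0,r))\to0$. This is exactly how the paper argues, with the ball $B(T(x),r)$.

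\emph{Lower bound.} Your two requirements on $\delta$ are incompatible.
On $Y_\delta$ you discard the penalty term, using only $\psi_*\le\bar\psi$, and lose $C\delta$; so you need $\delta\to0$.
On $X\setminus Y_\delta$ you need $\sup_Y\psi-\frac{D+1}{2}\delta\le-\psi^c(x)-\eta$. With $\psi\equiv0$ this reads $\frac{D+1}{2}\delta\ge\frac12\d^2(x,Y)+\eta$, which keeps $\delta$ bounded away from $0$.
``Enlarging the decay constant'' is not available: $\Lambda_\psi$ is fixed in the definition of $\psi_*$, and the lemma is later applied to this very $\psi_*$. So the shell $\{\delta'\le\d(\cdot,Y)\le\delta\}$ is left uncontrolled.

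The missing idea is the paper's Step 2: keep the penalty term. Take $y\in X$ with nearest point $z\in Y$, and set $s=\d(y,Y)$. Using $\d(x,y)\ge|\d(x,z)-s|$, $\d(x,z)\le D$ and $\Lambda_\psi=D+\Lip(\psi)+1$, one gets
\[
c(x,y)-\psi_*(y)\ \ge\ c(x,z)-\psi(z)+\tfrac12 s^2+\bigl(\Lambda_\psi-\Lip(\psi)-D\bigr)s\ \ge\ \psi^c(x)+\tfrac12 s^2+s .
\]
Thus $\inf_{Y_\delta}\{c(x,\cdot)-\psi_*\}=\psi^c(x)$ with no $\delta$-loss. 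You may then fix $\delta$ as large as the far part requires. On $Y_\delta$, conclude from the uniform Varadhan limit on the compact set $S\times Y_\delta$ together with $\mm(Y_\delta)<\infty$, as in \eqref{2.46}.
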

	\begin{proof}
	For  $\delta>0$ and $Y_\delta:=\{y\in X: \d(y, Y)\leq \delta\}$,  we denote
	\begin{equation}
		\int_X e^{\frac{\psi_{*}(y)}{t}} p_\frac{t}{2}(x,y)\,\d\mm(y)=\underbrace{\int_{Y_\delta} e^{\frac{\psi_{*}(y)}{t}} p_\frac{t}{2}(x,y)\,\d\mm(y)}_{I_1}+\underbrace{\int_{X\setminus Y_\delta} e^{\frac{\psi_{*}(y)}{t}} p_\frac{t}{2}(x,y)\,\d\mm(y)}_{I_2}.
	\end{equation}
	
	{\bf Step 1:} 
Recall that $\psi_{*}(y)=\bar \psi (y)-\Lambda_\psi\d(y,Y)$, $\Lambda_\psi=D+ \Lip(\psi) + 1,  D=\diam(S\cup Y)$.   We have
	\begin{equation}\label{ext}
		\begin{aligned}
			\frac{I_2}{I_1}
			\leq& \frac{e^{\frac{\sup_Y \psi_i-(D+1)\delta}{t}}\int_{X} p_\frac{t}{2}(x,y)\,\d\mm(y)}{e^{\frac{\inf_Y \psi_i}{t}}\int_{Y}  p_\frac{t}{2}(x,y)\,\d\mm(y)}.
		\end{aligned}
	\end{equation}
Combining with the stochastic completeness of the heat flow and the heat kernel estimate \eqref{3.15},    we can find $\delta>0$ large enough such that
 \begin{equation}\label{2.40}
     	\frac{I_2}{I_1}\lesssim  e^{-\frac{D^2}{t}}~~~~\text{as}~~t\to 0.
     \end{equation}
	
	\medskip
	
	{\bf Step 2:} We claim that
	\begin{equation}\label{3.61}
	\inf_{y\in Y_\delta\setminus Y}\{c(x,y)-\psi_{*}(y)\}>\inf_{y\in Y}\{c(x,y)-\psi(y)\}=\psi^c(x).
	\end{equation}
	
	Since $Y$ is compact,  there exists $y_1\in \overline {Y_\delta\setminus Y}$ and $y_2\in Y$,  such that 
	\begin{equation*}
	c(x,y_1)-\psi_{*}(y_1)=\inf_{y\in Y_\delta\setminus Y}\{c(x,y)-\psi_{*}(y)\},~~~\d(y_1,y_2)=\d(y_1,Y)=:s>0.
	\end{equation*}
 Then \eqref{3.61} follows from the following estimate
	\begin{equation}
	\begin{aligned}
	&c(x,y_1)-\psi_{*}(y_1)- \big(c(x,y_2)-\psi(y_2)\big),\\
	\geq & \frac{1}{2}\big(\d(x,y_2)-\d(y_1,y_2)\big)^2-\frac{1}{2}\d^2(x,y_2)-\Lip(\bar \psi)\d(y_1,y_2)+\Lambda_\psi\d(y_1,y_2)\\
	\geq & \frac{1}{2}s^2+(\Lambda_\psi-\Lip(\psi)-D)s=\frac{1}{2}s^2+s>0.
	\end{aligned}
	\end{equation}
	
	\medskip
	
	{\bf Step 3:} It holds that
	\begin{equation}\label{2.44}
		\lim_{t\rightarrow0} -t\log I_1=\psi^c.
	\end{equation}
Then we obtain \eqref{2.38} by combing \eqref{2.40}  and \eqref{2.44}.

	\paragraph{Lower bound:}
		\begin{equation}\label{2.45}
		\limi_{t\rightarrow0}	-t\log I_1\geq \psi^c.
	\end{equation}

		 Denote   $c_t(x,y)=-t \log p_\frac{t}{2}(x,y)$.   By Lemma \ref{lemma0.2},  we have $c_t(x,y)\rightarrow c(x,y)$ uniformly in $S\times Y_\delta$.    So for any $\epsilon>0$, there is $t_0>0$, such that for any $t<t_0$, 
		$c_t(x,y)\geq c(x,y)-\epsilon$, 
		and
		\begin{equation}\label{2.46}
			\begin{aligned}
				-t\log\int_{Y_\delta} e^{\frac{\psi_*(y)-c_t(x,y)}{t}}\d\mm(y)
				\geq &-t\log\int_{Y_\delta} e^{\frac{\psi_*(y)-c(x,y)+\epsilon}{t}}\,\d\mm(y)\\
				\overset{\eqref{3.61}}{\geq} &-t \log\int_{Y_\delta} e^{\frac{-\psi^c(x)+\epsilon}{t}}\,\d\mm(y)\\
				= & -t\left (\log\mm(Y_\delta)+\frac{-\psi^c(x)+\epsilon}{t}\right)\\
				=&\psi^c(x)-t\log\mm(Y_\delta)-\epsilon.
			\end{aligned}
		\end{equation}
		 Letting $t\rightarrow0$,  then letting $\epsilon\rightarrow0$, we get \eqref{2.45}.

		\paragraph{Upper bound:}
			\begin{equation}\label{2.47}
		\lims_{t\rightarrow0}	-t\log I_1\leq \psi^c.
	\end{equation}

		For  any $x\in S$,  by compactness of $Y$,  there exists  $T(x)\in Y$, such that
		$$\psi(T(x))+\psi^c(x)=c(x,T(x)).$$
		For any $\epsilon>0$, by continuity of $\psi_{*}$ and $c(x, y)$,  there is $r<\delta \wedge \epsilon$ such that
		$$\psi_{*}(y)\geq \psi(T(x))-\epsilon,\quad c(x,y)\leq c(x,T(x))+\epsilon,~~~\forall y\in B(T(x), r).$$
		Moreover, there exists $t_0>0$,  such that for any $t<t_0$, there holds
		$$c_t(x,y)\leq c(x,y)+\epsilon.$$
		
		Then for any $t<t_0$, we have
		\begin{equation}\label{2.48}
			\begin{aligned}
				-t\log\int_{Y_\delta} e^{\frac{\psi_*(y)-c_t(x,y)}{t}}\,\d\mm(y)
				\leq &-t\log\int_{B(T(x),r)} e^{\frac{\psi(T(x))-c(x,T(x))-3\epsilon}{t}}\,\d\mm(y)\\
				= &-t\log\int_{B(T(x),r)} e^{\frac{-\psi^c(x)-3\epsilon}{t}}\,\d\mm(y)\\
				=&-t\log\mm(B(T(x),r))+\psi^c(x)+3\epsilon.
			\end{aligned}
		\end{equation}
		Letting $t\rightarrow0$, then $\epsilon\rightarrow0$, we get \eqref{2.47}.

	\end{proof}

	\begin{lemma}\label{l0.21}
		Let $\varphi$ be a Kantorovich potentials from $\nu$ to $\rho$.   Then $\mu^t[\varphi_{*}]\weakto \nu$,   i.e.,  for any  $v\in\Lip(X,\d)$, it holds
		\begin{equation}
			\lim_{t\rightarrow0}\E_{\mu^t[\varphi_{*}]}(v)=\E_\nu(v).
		\end{equation}
	\end{lemma}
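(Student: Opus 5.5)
The plan is to realise $\mu^t[\varphi_*]$ as the second marginal of an explicit coupling and show that any weak limit point is an optimal coupling between $\rho$ and a measure supported in $Y$ whose $c$-transform structure is given by $\varphi$. Uniqueness of optimal maps (Gigli--Rajala--Sturm) then forces the limit to be $\nu$.

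Here $\varphi$ is understood as the potential on the $Y$-side, with $\varphi^c$ the corresponding potential on $S$, so that $\varphi(y)+\varphi^c(x)=c(x,y)$ on ${\rm supp}\,\gamma$ for the optimal plan $\gamma$ between $\rho$ and $\nu$. Since $\rho\ll\mm$, $\gamma=({\rm id},T)_\sharp\rho$.

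\textbf{Step 1: the coupling and tightness.} Define
\[
\gamma_t:=\int_S \delta_x\otimes\mu_x^t[\varphi_*]\,\d\rho(x),
\]
whose marginals are $\rho$ and $\mu^t[\varphi_*]$. By Step 1 of the proof of Lemma \ref{l0.20}, i.e.\ the bound \eqref{2.40} on $I_2/I_1$, we have $\mu_x^t[\varphi_*](X\setminus Y_\delta)\to0$ uniformly in $x\in S$. Hence $(\gamma_t)$ is tight on $S\times Y_\delta$, which is relatively compact since $X$ is proper. The growth bound $|v|\lesssim 1+\d(\cdot,Y)$, together with the exponential decay of $e^{\psi_*/t}$ used in Lemma \ref{Lemma3.3}, gives uniform integrability. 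It therefore suffices to identify every weak limit $\gamma_0$ of $\gamma_{t_k}$ with $\gamma$.

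\textbf{Step 2: concentration on the contact set.} This is the main obstacle. I would show that for each $\epsilon>0$,
\[
\mu_x^t[\varphi_*]\big(\{y:\ c(x,y)-\varphi_*(y)>\varphi^c(x)+\epsilon\}\big)\to0 \quad\text{for $\rho$-a.e.\ } x.
\]
The proof combines three ingredients.
\begin{itemize}
\item On the set in question, the numerator of $\mu_x^t$ is bounded by $e^{-(\varphi^c(x)+\epsilon/2)/t}\,\mm(Y_\delta)$, using the uniform Varadhan estimate (Lemma \ref{lemma0.2}) with $c_t\ge c-\epsilon/2$, plus the tail $I_2$.
\item The denominator is at least $e^{-(\varphi^c(x)+\epsilon/4)/t}$ by the upper bound \eqref{2.47} in Lemma \ref{l0.20}.
\item Dividing, the ratio is $\lesssim e^{-\epsilon/(4t)}\to0$.
\end{itemize}
Since Step 2 of Lemma \ref{l0.20} shows that points of $Y_\delta\setminus Y$ are strictly worse, $\gamma_0$ is concentrated on
\[
\Gamma=\{(x,y)\in S\times Y:\ \varphi(y)+\varphi^c(x)=c(x,y)\}
\]
by dominated convergence in $x$ and Portmanteau. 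The care is needed because the convergence in Lemma \ref{l0.20} is pointwise in $x$; I would handle this by dominated convergence with the bound $\le1$. The closedness of $\Gamma$ follows from continuity of $\varphi$ and $\varphi^c$.

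\textbf{Step 3: identification.} Since $\Gamma$ is $c$-cyclically monotone, $\gamma_0$ is an optimal plan from $\rho$ to its second marginal $\nu_0\in\mathcal P(Y)$. The sets $\Gamma_x$ are single points $\{T(x)\}$ for $\rho$-a.e.\ $x$. This holds because $\varphi^c$ is $c$-concave, hence differentiable $\rho$-a.e., and the $c$-superdifferential is a.e.\ a singleton by the Gigli--Rajala--Sturm argument, as $\rho\ll\mm$. So $\gamma_0=({\rm id},T)_\sharp\rho=\gamma$, and hence $\nu_0=\nu$. Because the limit is independent of the subsequence, $\E_{\mu^t[\varphi_*]}(v)\to\E_\nu(v)$ for all $v\in\Lip(X,\d)$, using the uniform integrability from Step 1 to pass from bounded to Lipschitz test functions.
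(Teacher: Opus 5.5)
Your proof is correct, but it is organized differently from the paper's.

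\textbf{The paper's route.} The paper argues pointwise in $x$. It first invokes uniqueness of the optimal map, together with \eqref{3.61}, to get a unique minimizer $T(x)$ of $c(x,\cdot)-\varphi_*$ on $Y_\delta$ for $\rho$-a.e.\ $x$. From compactness it extracts a quantitative gap $4\delta_{r_2}$ outside $B(T(x),r_2)$. It then compares the heat-kernel weights on $B(T(x),r_1)$ with those on $Y_\delta\setminus B(T(x),r_2)$ (see \eqref{253}--\eqref{255}), concluding $\E_{\mu_x^t[\varphi_*]}(v)\to v(T(x))$, and finally integrates against $\rho$.

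\textbf{Your route.} You use the Laplace-type estimate only in a weak form: $\mu_x^t[\varphi_*]$ concentrates on $\epsilon$-near-minimizers. This needs only Lemma \ref{lemma0.2}, \eqref{2.40} and \eqref{2.47}; it requires neither uniqueness of the minimizer nor a gap. You then identify the limit at the level of plans, via Prokhorov and Portmanteau. Portmanteau is applied to the open sets $\{c-\varphi_*>\varphi^c+\epsilon\}$. Note that $\gamma_0$ does not charge $\partial S\times X$, because its first marginal is $\rho$.

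\textbf{What each buys.} The paper's argument is more elementary, since it avoids compactness of plans. It also yields the stronger statement $\mu_x^t[\varphi_*]\to\delta_{T(x)}$ for $\rho$-a.e.\ $x$. Yours confines the use of uniqueness to a single step, where it is exactly the Gigli--Rajala--Sturm theorem.

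\textbf{On that identification step.} Avoid the Riemannian phrasing ``$\varphi^c$ is differentiable $\rho$-a.e.'' and argue directly on plans instead:
\begin{itemize}
\item Both $\gamma_0$ and $\gamma=({\rm id},T)_\sharp\rho$ are concentrated on the $c$-cyclically monotone set $\Gamma$.
\item Hence $\tfrac12(\gamma_0+\gamma)$ is optimal between its (compactly supported) marginals.
\item Its first marginal is $\rho\ll\mm$, so by Gigli--Rajala--Sturm it is induced by a map.
\item Disintegrating in $x$, this forces $\gamma_0=\gamma$.
\end{itemize}
Once you argue this way, the a.e.\ singleton property of $\Gamma_x$ is not needed separately. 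This also makes rigorous the a.e.\ uniqueness of minimizers that the paper asserts somewhat tersely.
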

     \begin{proof}
     		We just need to show that 
     	\begin{equation}\label{0.38}
     		\lim_{t\rightarrow0}\E_{\mu_x^t[\varphi_{*}]}(v)=v(T(x)),\quad\text{for $\rho$-a.e. $x\in S$},
     	\end{equation}
     	where $T$ is the unique optimal transport map from $\rho$ to $\nu$.  The proof is very similar to that of Lemma \ref{l0.20}, so we only sketch it.
          \medskip
          
     	{\bf Step 1:}   Without loss of generality,  we assume that $v\geq 0$.  Similar to \eqref{2.40},  there is $\delta>0$ such that 
     	\begin{equation}\label{ext1}
     	\frac{\int_{X\setminus Y_\delta} v(y) e^{\frac{\varphi_*(y)}{t}}p_\frac{t}{2}(x,y)\,\d\mm(y)}{\int_X e^{\frac{\varphi_*(y)}{t}}p_\frac{t}{2}(x,y)\,\d\mm(y)}\lesssim  e^{-\frac{D^2}{t}} ~~\text{as}~~t\to 0.
     	\end{equation}
     	and
     	\begin{equation}\label{2.53}
     	\frac{\int_{X\setminus Y_\delta} e^{\frac{\varphi_*(y)}{t}}p_\frac{t}{2}(x,y)\,\d\mm(y)}{\int_X e^{\frac{\varphi_*(y)}{t}}p_\frac{t}{2}(x,y)\,\d\mm(y)}\lesssim  e^{-\frac{D^2}{t}} ~~\text{as}~~t\to 0.
     	\end{equation}

     	{\bf Step 2:} By uniqueness of optimal transport $T$ and \eqref{3.61}, for almost every $x\in S$,  there is a unique $T(x)\in Y$ so that $$\varphi^c(x)=c(x, T(x))-\varphi_*(T(x))=\inf_{y\in Y_\delta}\big\{c(x,y)-\varphi_{*}(y)\big\}.$$
    Then 	 for any $\epsilon>0$,   we can find
     	$0<r_2<\delta$ such that 
     	\begin{equation}\label{T}
     	|v(y)-v(T(x)| <\epsilon,~~~~~\forall y \in B\big(T(x),  r_2\big)
     	\end{equation}
     	and
     \begin{equation}\label{r1r2}
     	\varphi_*(y)-c(x,y)<-\varphi^c-4\delta_{r_2},~~~~~\forall y \in Y_\delta\setminus B\big(T(x),  r_2\big)
     	\end{equation}
     for some $	\delta_{r_2}>0$.
By Lemma \ref{lemma0.2},   there is  $t_0>0$ such that 
		$$|c_t(x,y)-c(x,y)|<\delta_{r_2},~~~\forall (x, y)\in S\times Y~\text{and}~ t<t_0.$$
		
By the same argument as \eqref{2.46},  for $t<t_0$ we have \begin{equation}\label{253}
  {e^{\frac{\varphi_*(y)}{t}}p_\frac{t}{2}(x,y)}\leq  {e^{\frac{-\varphi^c(x)-3\delta_{r_2}}{t}}},~~~\forall y\in Y_\delta\setminus B\big(T(x),  r_2\big),
     	\end{equation}
Furthermore,   similar to  \eqref{2.48},   we can find $ r_1<r_2$ so that
     	\begin{equation}\label{254}
  {e^{\frac{\varphi_*(y)}{t}}p_\frac{t}{2}(x,y)}> {e^{\frac{-\varphi^c(x)-2\delta_{r_2}}{t}}},~~~\forall y \in B\big(T(x),  r_1\big)~\text{and}~ t<t_0.
     	\end{equation}
     	
It follows from \eqref{253},  \eqref{254}  that
     \begin{equation}\label{255}
   	\frac{\int_{Y_\delta \setminus B(T(x),  r_2)} e^{\frac{\varphi_*(y)}{t}}p_\frac{t}{2}(x,y)\,\d\mm(y)}{\int_{B(T(x),  r_2)} e^{\frac{\varphi_*(y)}{t}}p_\frac{t}{2}(x,y)\,\d\mm(y)}\to 0 ~~\text{as}~~t\to 0.
     	\end{equation}
     	
Combining \eqref{ext1},  \eqref{2.53}, \eqref{T} and \eqref{255} we obtain
     	\begin{eqnarray*}
     	&&\lim_{t\rightarrow0}|\E_{\mu_x^t[\varphi_{*}]}(v)-v(T(x))|\\&=&\lim_{t\rightarrow0}  \left|	\frac{\int_{ X}  \big(v(y)-v(T(x))\big)e^{\frac{\varphi_*(y)}{t}}p_\frac{t}{2}(x,y)\,\d\mm(y)}{\int_{X} e^{\frac{\varphi_*(y)}{t}}p_\frac{t}{2}(x,y)\,\d\mm(y)}\right|\\ 
     	&=&\lim_{t\rightarrow0}  \left|	\frac{\int_{B(T(x),  r_2)}  \big(v(y)-v(T(x))\big)e^{\frac{\varphi_*(y)}{t}}p_\frac{t}{2}(x,y)\,\d\mm(y)}{\int_{B(T(x),  r_2)} e^{\frac{\varphi_*(y)}{t}}p_\frac{t}{2}(x,y)\,\d\mm(y)}\right|\\ &\leq & \epsilon.
     	\end{eqnarray*}
Letting $\epsilon \to 0$ we get \eqref{0.38}.

     \end{proof}
	
	\bigskip
	
	\begin{proof}[Proof of Theorem \ref{thm0.1}]
	Let  $\varphi$ and $\psi$ be  Kantorovich potentials from  $\mu$ to $\rho$ and  $\nu$ to $\rho$ respectively.
By the heat kernel lower bound \eqref{3.15} and  the stochastic completeness of the heat flow,  we can see that $|\Phi_t[\psi_{*}]|, |\Phi_t[\varphi_{*}]| $ are uniformly   bounded for $t\in(0,1]$. So by the dominated convergence theorem and Lemma \ref{l0.20} we obtain
		\begin{equation}\label{3.77}
			\begin{aligned}
				&\lim_{t\rightarrow0}\int_{S}\left|\Big(\Phi_t[\varphi_{*}](x)-\K_t[\varphi_{*}]\Big)-\Big(\Phi_t[\psi_{*}](x)-\K_t[\psi_{*}]\Big)\right|\,\d\rho(x)\\
				=&\int_{S}\left|\Big(\varphi^c(x)-\E_\rho(\varphi^c)\Big)-\Big(\psi^c(x)-\E_\rho(\psi^c)\Big)\right|\,\d\rho(x).
			\end{aligned}
		\end{equation}

		Note that  $(\varphi_{*}-\psi_{*})\restr{Y}=\varphi-\psi$,  by Lemma \ref{l0.21}, we have
		\begin{equation}\label{3.73}
		\lim_{t\rightarrow 0}\E_{\mu^t[\varphi_{*}]-\mu^t[\psi_{*}]}(\varphi_{*}-\psi_{*})=\E_{\mu-\nu}(\varphi-\psi).
		\end{equation}
		
		Combining Proposition
		\ref{0.30}, \eqref{3.77} and \eqref{3.73}, we obtain
		\begin{equation}
	\int_{S}\left|\Big(\varphi^c(x)-\E_\rho(\varphi^c)\Big)-\Big(\psi^c(x)-\E_\rho(\psi^c)\Big)\right|\,\d\rho(x)\leq  \sqrt{C\left|\E_{\mu-\nu}(\varphi-\psi)\right|}
		\end{equation}
		for some $C>0$.  Note that  $c(x,y)=\frac{1}{2}\d^2(x,y)$,  thus the Kantorovich potential $\varphi$ and $\psi$ are $\diam(S\cup Y)$-Lipschitz,  so $C$ depends only on $K,N,a_1,a_2,\diam(S\cup Y),S$.
		
		By assumption,  $\phi_\nu=\varphi^c, \phi_\mu=\psi^c$ satisfies  $\E_\rho(\phi_\mu)=\E_\rho(\phi_\nu)=0$.  Note also that $\Lip(\varphi-\psi)\leq 2\diam(S\cup Y)$, by Kantorovich duality for $W_1$, we finally obtain
		\begin{equation}
		\|\phi_\mu-\phi_\nu\|^2_{L^1(\rho)}\leq 2C \diam(S\cup Y) W_1(\mu,\nu).
		\end{equation}
	\end{proof}

	\section{Stability of optimal transport maps}\label{sec4}
	In this section, we prove Theorem \ref{thm0.2} concerning the quantitative stability of optimal transport maps on Alexandrov spaces. This is achieved by combining Theorem \ref{thm0.1} and the following  estimate.
	
	\begin{theorem}\label{thm2}
			Let $(X,  \d)$ be an Alexandrov space with no boundary. Then under the same assumptions of Theorem \ref{thm0.2},  there exists a constant $\bar{C}>0$, depending on $k$,$n$,$a_1$, $a_2$, $\diam(Y)$,  $S$, such that for any $\mu,\nu\in\mathcal{P}(Y)$, we have 
		\begin{equation}
			\int_S |\nabla\phi_\mu(x)-\nabla\phi_\nu(x)|^2\,\d\rho(x)\leq \bar{C} \left(\int_S|\phi_\mu(x)-\phi_\nu(x)|^2\,\d\rho(x)\right)^\frac{1}{3},
		\end{equation}
		where $\phi_\mu$ and $\phi_\nu$ are the Kantorovich potentials from $\rho$ to $\mu$ and $\rho$ to $\nu$ respectively.
	\end{theorem}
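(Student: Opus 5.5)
We follow the strategy of Kitagawa--Letrouit--M\'erigot and derive a gradient bound from an $L^2$ bound using semiconcavity of the potentials. On an Alexandrov space with curvature $\geq k$, the distance function satisfies a Hessian upper bound: for fixed $y$ with $\d(x,y)\le D$, the function $x\mapsto \frac12\d^2(x,y)$ is $\lambda$-semiconcave, with $\lambda=\lambda(k,n,D)$. Hence $\phi_\mu=\psi^c$ and $\phi_\nu$, being infima of such functions minus constants, are $\lambda$-semiconcave on $S$. Consequently $u:=\phi_\mu-\phi_\nu$ is a difference of two $\lambda$-semiconcave functions. Each of them has measure-valued Laplacian (Perelman's DC calculus, or the RCD Laplacian comparison) with singular part $\le 0$ and absolutely continuous part $\le n\lambda$. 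Both potentials are also $D$-Lipschitz, so $|\nabla u|\le 2D$.

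The core step is an integration by parts. We would write
\[
\int_S |\nabla u|^2\,\d\mm=-\int_S u\,\d\Delta u+\text{boundary term}.
\]
This requires a cutoff, because $S$ is only a John domain of finite perimeter. We take $\eta_\epsilon$ Lipschitz with $\eta_\epsilon=1$ on $S_\epsilon=\{x\in S:\d(x,S^c)>\epsilon\}$, support in $S$, and $|\nabla\eta_\epsilon|\le 1/\epsilon$. Then
\[
\int\eta_\epsilon|\nabla u|^2=-\int \eta_\epsilon u\,\d\Delta u-\int u\langle\nabla\eta_\epsilon,\nabla u\rangle .
\]
Since $\Delta u=\Delta\phi_\mu-\Delta\phi_\nu$ and $|u|$ is controlled, we bound
\[
\Big|\int\eta_\epsilon u\,\d\Delta u\Big|\le \|u\|_{L^\infty(S)}\bigl(|\Delta\phi_\mu|+|\Delta\phi_\nu|\bigr)(\mathrm{supp}\,\eta_\epsilon).
\]
The total variation of the Laplacian of a semiconcave Lipschitz function on a region is bounded by twice the positive part plus the flux through the boundary. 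The flux is controlled by $D\cdot\mathrm{Per}$, which is where finite perimeter of $S$ enters (via coarea/perimeter bounds on the tubes $S\setminus S_\epsilon$). The region near the boundary contributes at most $4D^2\mm(S\setminus S_\epsilon)$.

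To avoid needing $\|u\|_\infty$, we convert $L^\infty$ to $L^2$ using the Lipschitz bound together with the Bishop--Gromov lower volume bound on the John domain. If $|u(x_0)|=M$, then $|u|\ge M/2$ on $B(x_0,M/4D)\cap S$. The John condition gives $\mm(B(x_0,r)\cap S)\gtrsim r^n$, so
\[
\|u\|_{L^2(\rho)}^2\gtrsim M^2 (M/D)^n .
\]
That alone gives a dimension-dependent exponent, not $1/3$. To obtain exactly $1/3$ we instead use the interior part in the form $\int\eta_\epsilon|u|\,\d|\Delta u|$ with an $L^2$ estimate. Specifically, we bound $|\Delta u|$ in $L^2$ away from the boundary after a mollification of $u$ at scale $\epsilon$: by the semiconcavity Hessian bound, the heat-flow regularization $H_{\epsilon^2}u$ has Laplacian of order $1/\epsilon$ in $L^1$ and of order $\epsilon^{-1}$ pointwise from above. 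We also use $\|\nabla(u-H_{\epsilon^2}u)\|_{L^2}^2\lesssim\epsilon$, which follows from the one-sided Hessian bound (as in the Euclidean argument: the $BV$ norm of $\nabla u$ is bounded, and gradients of BV-gradient functions vary by $O(\epsilon)$ in $L^1$ and hence $O(\epsilon)$ in $L^2$ since $|\nabla u|\le 2D$). Combining gives
\[
\int_S|\nabla u|^2\,\d\rho\lesssim \epsilon+\epsilon^{-1/2}\|u\|_{L^2}+\epsilon^{-1}\|u\|_{L^2}^2 .
\]
Optimizing with $\epsilon=\|u\|_{L^2}^{2/3}$ yields the exponent $1/3$.

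The main obstacle is the nonsmooth part. We must justify that semiconcave functions on Alexandrov spaces without boundary have $\nabla u\in BV$ with a quantitative total-variation bound on $S$. This should follow from Perelman's DC structure and the Laplacian comparison in RCD$(k(n-1),n)$, with the perimeter of $S$ controlling boundary fluxes. We must also justify the stability estimate $\|\nabla u-\nabla H_{\epsilon^2}u\|_{L^1}\lesssim\epsilon\,|D\nabla u|(S)$. We would first try to prove this estimate via the Bakry--\'Emery gradient estimate for the heat flow together with the Bochner inequality, testing against the measure-valued Hessian bound.
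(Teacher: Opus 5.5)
Your route is genuinely different from the paper's: an $n$-dimensional integration by parts with an interior cutoff and heat-flow mollification. As written it does not prove the theorem under its hypotheses, and the decisive gap is the boundary layer of $S$.

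Each error term in your scheme has the right size only if $\mm(S\setminus S_\epsilon)\lesssim\epsilon$:
\begin{itemize}
\item the discarded layer costs $4D^2\mm(S\setminus S_\epsilon)$;
\item the cutoff term is bounded only by $\bigl|\int u\langle\nabla\eta_\epsilon,\nabla u\rangle\bigr|\le 2D\|u\|_{L^2(S)}\mm(S\setminus S_\epsilon)^{1/2}\epsilon^{-1}$;
\item the flux in your total-variation bound is $\int\langle\nabla\eta_\epsilon,\nabla\phi_\mu\rangle\le D\,\mm(S\setminus S_\epsilon)/\epsilon$.
\end{itemize}
You say this is controlled by $D\cdot{\rm Per}(S)$ ``via coarea''. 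But coarea gives $\mm(S\setminus S_\epsilon)=\int_0^\epsilon{\rm Per}(S_r)\,\d r$, which involves the perimeters of the inner parallel sets. What you actually need is an upper Minkowski-content bound for $\partial S$, and John plus ${\rm Per}(S)<\infty$ does not give it.

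For example, remove the $\mathcal L^2$-null dust $C\times C$ ($C$ the middle-thirds Cantor set) from the square $(-1,2)^2$. The result is still a John domain with the same perimeter, yet $\mm(S\setminus S_\epsilon)\sim\epsilon^{2-\log 4/\log 3}$. If $\mm(S\setminus S_\epsilon)\sim\epsilon^\alpha$, the same optimization only yields $\|u\|_{L^2}^{2\alpha/(2+\alpha)}$ instead of $\|u\|_{L^2}^{2/3}$. Null pieces of $S^c$ can be dodged by cleverer cutoffs, but your sketch has no mechanism for general $\partial S$.

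The paper never cuts off in $n$ dimensions:
\begin{itemize}
\item it lifts $\int_S|\nabla u|^2\,\d\rho$ to the unit sphere bundle and uses invariance of the Liouville measure under the geodesic flow (this is where the no-boundary assumption enters);
\item it applies the one-dimensional Lemma~\ref{l0.18} on each component of $\{s: b_s(x,v)\in S\}$; that lemma's constant does not depend on the interval length, so short pieces near $\partial S$ cost nothing;
\item it then needs only the mean number of crossings $\int\#I_S\,\d\mm_{\mathrm S}$, which the Crofton-type BV argument of Proposition~\ref{prop3.5} bounds by $c_n(\mathcal H^n(S_1)+{\rm Per}(S))$.
\end{itemize}
That one-dimensional reduction is the idea your proposal is missing.

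Second, the analytic core you call the ``main obstacle'' is left unproved. A quantitative BV bound on $\nabla u$ and the estimate $\|\nabla u-\nabla H_{\epsilon^2}u\|_{L^1}\lesssim\epsilon\,|D\nabla u|(S)$ are not available here in the form you need. Also, the right-hand side would have to involve a neighbourhood of $S$, since $H_{\epsilon^2}$ is nonlocal.

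This part can be bypassed using only the measure-valued Laplacian. Write $\int\eta|\nabla u|^2=\int\eta\langle\nabla u,\nabla(u-H_tu)\rangle+\int\eta\langle\nabla u,\nabla H_tu\rangle$.
\begin{itemize}
\item The first term equals $-\int\eta\,(u-H_tu)\,\d\Delta u-\int(u-H_tu)\langle\nabla\eta,\nabla u\rangle$. Since $\|u-H_tu\|_{L^\infty(S)}\lesssim\sqrt t$, it is $O(\sqrt t)$ times $\bigl(\int\eta\,\d|\Delta u|+\|\nabla\eta\|_{L^1}\bigr)$.
\item The second term is at most $\|u\|_{L^2}\|\Delta H_tu\|_{L^2(S)}$ plus a cutoff term. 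Here $\|\Delta H_tu\|_{L^2(S)}\lesssim t^{-1/4}$ follows from $\|\Delta H_tu\|_{L^\infty}\lesssim t^{-1/2}$ and $\|\Delta H_tu\|_{L^1(S)}\lesssim 1$.
\end{itemize}
With $\sqrt t=\epsilon$ this gives $\epsilon+\epsilon^{-1/2}\|u\|_{L^2}$, hence exponent $\frac13$. But this works only for domains with $\mm(S\setminus S_\epsilon)\le C\epsilon$, e.g.\ Lipschitz domains. For such domains your route would not even need the Alexandrov structure. The paper's route covers general John domains of finite perimeter, at the price of using the Alexandrov geodesic flow.
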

	
 We follow a strategy of \cite{arXiv:2504.05412} by lifting integrals from the base space to the unit tangent bundle,  evolve the relevant quantities via the geodesic flow developed in \cite{zbMATH07328107},  and then exploit the one-dimensional convexity of the potentials.  For the reader's convenience, we recall the necessary Alexandrov geometry theory and provide a self-contained proof.
	
	The Riemannian structure on the set of regular points $X_{\mathrm{reg}}$ (cf.\cite{zbMATH00606998}) endows  the tangent bundle $\mathrm{T}X_{\mathrm{reg}}$ an  Euclidean vector bundle structure.  On this Euclidean vector bundle, one can define (cf. \cite[Section3]{zbMATH07328107} ) a canonical Liouville measure $\mm_\mathrm L$: it is the unique Borel measure on $\mathrm{T}X_{\mathrm{reg}}$ such that for any Borel set $A\subset \mathrm{T}X_{\mathrm{reg}}$, we have
	\begin{equation*}
		\mm_\mathrm L(A)=\int_X \mathcal{H}^n\big(A\cap \mathrm{T}_xX\big)\,\d\mathcal{H}^n(x).
	\end{equation*}
	We extend $\mm_\mathrm L$ to a measure on the tangent bundle $\mathrm{T}X$ by setting $\mm_\mathrm L(\mathrm{T}X\setminus\mathrm{T}X_{\mathrm{reg}})=0$.   Moreover, by  \cite[Theorem 1.4]{zbMATH07694914}, if $(X,  \d)$ is an Alexandrov space with no boundary, then the geodesic flow preserves the Liouville measure. 
	
	Let $\mathrm{S}X=\{v\in \mathrm{T}X: |v|=1\}$ denote the unit tangent bundle (sphere bundle). There is a canonical Liouville measure $\mm_\mathrm S$ on $\mathrm{S}X$ (cf.\cite[Section 3.6]{zbMATH07328107}), also called Liouville measure, such that if the geodesic flow defined on ${\rm T}X$ preserves $\mm_\mathrm L$, then the geodesic flow defined on ${\mathrm S}X$ preserves $\mm_\mathrm S$ as well.   For any Borel set $A\subseteq {\mathrm S}X$, we have
	\begin{equation}\label{liouville}
		\mm_\mathrm S(A)=\int_{{\mathrm S}X}\chi_A(x,v)\,\d\mm_\mathrm S(x,v)=c_n\int_X\int_{\Sigma_x(X)} \chi_A(x,v)\,\d\sigma_x(v)\,\d\mathcal{H}^n(x),
	\end{equation}
	where $\Sigma_x(X)$ denotes the space of directions at $x$, $c_n=\mathcal{H}^{n-1}({\mathrm S}^{n-1})$ and $\sigma_x\in \mathcal{P}(\Sigma_x(X))$ is the canonical probability measure on the fiber.

	\paragraph{Lifting to the unit tangent bundle.}
	Let $\varphi_s: {\mathrm S}X\rightarrow{\mathrm S}X$ be the geodesic flow at time $s\in[0,1]$ on $\mathrm{S}X$, and write $\varphi_s(x,v)=(b_s(x,v),t_s(x,v))$, where $b_s(x,v)\in X$ and $t_s(x,v)\in {\Sigma_{b_s(x,v)}(X)}$. For $\mm_\mathrm S$-a.e. $(x,v)\in {\mathrm S}X$ such that the curve $[0,1]\ni s \mapsto b_s(x,v)$ is a locally shortest path,  denote by $I_S(x,v)$ the set of connected components of $\{s\in[0,1]: b_s(x,v)\in S\}$. Since $S$ is an open set, we have
	\begin{equation*}
		\{s\in [0,1]: b_s(x,v)\in S\}=\bigcup_{i\in I_S(x,v)}(\alpha_i(x,v),\beta_i(x,v)).
	\end{equation*}
	For  $x\in S \cap X_{\mathrm{reg}}$, by \cite[Theorem 10.8.4]{zbMATH01626771}, $\Sigma_x(X)$ is isometric to $\mathrm S^{n-1}$, then  there is a universal constant $d_n$ such that
	\begin{equation}\label{0.53}
		\begin{aligned}
			&\int_{\Sigma_x(X)} \langle\nabla\phi_\mu(x)-\nabla\phi_\nu(x),v\rangle^2\,\d\sigma_x(v)\\
			=&\int_{{\rS}^{n-1}}\langle\nabla\phi_\mu(x)-\nabla\phi_\nu(x),v\rangle^2\,\d\sigma_x(v)\\
			=&d_n |\nabla\phi_\mu(x)-\nabla\phi_\nu(x)|^2.
		\end{aligned}
	\end{equation}

	By \cite[Section 3.6]{zbMATH07328107},   $\mm_\rS$ is preserved by $\varphi_s$,  so
	\begin{equation}\label{0.54}
		\begin{aligned}
			&\int_S |\nabla\phi_\mu(x)-\nabla\phi_\nu(x)|^2\,\d\rho(x)
			\leq  a_2\int_S |\nabla\phi_\mu(x)-\nabla\phi_\nu(x)|^2\,\d\mathcal{H}^n(x)\\
			\mathop{=}^{\eqref{0.53}}&\frac{a_2}{d_n}\int_{S}\int_{\Sigma_x(X)}\langle\nabla\phi_\mu(x)-\nabla\phi_\nu(x),v\rangle^2\,\d\sigma_x(v)\,\d\mathcal{H}^n(x)\\
			\mathop{=}^{\eqref{liouville}}&\frac{a_2}{c_nd_n}\int_{{\rS}X}\langle\nabla\phi_\mu(x)-\nabla\phi_\nu(x),v\rangle^2\chi_S(x)\,\d\mm_\rS(x,v)\\
			=&\frac{a_2}{c_nd_n}\int_{{\rS}X}\int_0^1\langle\nabla\phi_\mu(b_s(x,v))-\nabla\phi_\nu(b_s(x,v)),t_s(x,v)\rangle^2\chi_S(b_s(x,v))\,\d s \,\d\mm_\rS(x,v)\\
			=&\frac{a_2}{c_nd_n}\int_{{\rS}X}\sum_{i\in  I_S(x,v)}\int_{\alpha_i(x,v)}^{\beta_i(x,v)}\langle\nabla\phi_\mu(b_s(x,v))-\nabla\phi_\nu(b_s(x,v)),t_s(x,v)\rangle^2\,\d s \,\d\mm_\rS(x,v).\\	
		\end{aligned}
	\end{equation}
	
	\paragraph{One-dimensional convexity estimate.}
	
	For $(x,v)\in {\rS}X$ and $s\in[0,1]$,   we denote $u_\mu^{(x,v)}(s)=\phi_\mu(b_s(x,v))$.   Then
	\begin{equation*}
		\frac{\d}{\d s}u_\mu^{(x,v)}(s)=\langle\nabla\phi_\mu(b_s(x,v)), t_s(x,v)\rangle, \quad \text{for a.e. $s\in [0,1]$},
	\end{equation*}
	and an analogous formula holds for $u_\nu^{(x,v)}(s)=\phi_\nu(b_s(x,v))$.   
	
		Denote $S_1=\{x\in X: \d(x, S)\leq 1\}$.  Note that in the last integral of \eqref{0.54}, only the elements $(x,v)\in {\rS}X$ for which $x\in S_1$ have a non-vanishing contribution.  For $\mm_\rS$-a.e. $(x,v)\in {\rS}X$ with $x\in S_1$, the curve $s\mapsto b_s(x,v)$ is locally minimizing on $[0,1]$. Thus, applying Ohta's semiconcavity estimate  \cite[Lemma 3.2]{zbMATH05549756}  on sufficiently small subsegments yields local concavity of $u^{(x,v)}_\mu-\zeta |s|^2$ with a uniform constant $\zeta$. The local-to-global principle for concave functions then leads to the following lemma.

	\begin{lemma}[\cite{zbMATH05549756}, Lemma 3.2]\label{l0.17}
		There exists $\zeta$, depending on $k$ and $\diam(S\cup Y)$, such that for $\mm_\rS$-a.e. $(x,v)\in {\rS}{S_1}$, the functions $u_\mu^{(x,v)}-\zeta|s|^2$, $u_\nu^{(x,v)}-\zeta|s|^2$ are concave on $s\in[0,1]$. Moreover, the modulus of their derivatives (which exist a.e. on $[0,1]$) is bounded above by $\diam(S\cup Y)+2\zeta $.
	\end{lemma}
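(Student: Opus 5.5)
The plan is to reduce the statement to Ohta's semiconcavity estimate for Kantorovich potentials, applied along the geodesic lines $s\mapsto b_s(x,v)$. First recall that $\phi_\mu=\psi^c$ with $\psi^c(x)=\inf_{y\in Y}\{\frac12\d^2(x,y)-\psi(y)\}$. This is an infimum of the functions $x\mapsto \frac12\d^2(x,y)-\psi(y)$, $y\in Y$. Each is $\diam(S\cup Y)$-Lipschitz on $S\cup Y$, and more generally has slope at most $\d(x,y)$ at $x$. On an Alexandrov space with curvature $\geq k$, each $\frac12\d^2(\cdot,y)$ is semiconcave: along any unit-speed geodesic $\gamma$, the function $s\mapsto \frac12\d^2(\gamma(s),y)-\zeta s^2$ is concave locally, with $\zeta$ controlled by $k$ and an upper bound on $\d(\gamma(s),y)$. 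This is exactly \cite[Lemma 3.2]{zbMATH05549756}, obtained by comparison with the model space of curvature $k$.

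\textbf{Step 1.} For $x\in S_1$ and $s\in[0,1]$, the point $b_s(x,v)$ lies within distance $2$ of $S$. Hence $\d(b_s(x,v),y)\le \diam(S\cup Y)+2$ for every $y\in Y$. On this bounded region, apply the semiconcavity estimate to each $\frac12\d^2(\cdot,y)$ along the curve. The curve is locally minimizing for $\mm_\rS$-a.e.\ $(x,v)$, by the geodesic flow construction of \cite{zbMATH07328107}. This gives a uniform $\zeta=\zeta(k,\diam(S\cup Y))$ on short subsegments.

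\textbf{Step 2.} An infimum of functions of the form ``concave $+\zeta s^2$'' is again of that form, so $u_\mu^{(x,v)}-\zeta s^2$ is concave on each short subsegment. Concavity is a local property for functions on an interval: a continuous function that is concave near every point is concave. Hence $u_\mu^{(x,v)}-\zeta s^2$ is concave on all of $[0,1]$, and likewise for $\nu$.

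\textbf{Step 3.} For the derivative bound, $u_\mu^{(x,v)}$ is Lipschitz with constant at most $\sup_{z,y}\d(z,y)$ over the relevant region, since $\psi^c$ inherits the slope bound of the $\frac12\d^2(\cdot,y)$. Adding the derivative $2\zeta s\le 2\zeta$ of the quadratic term then gives the stated bound $\diam(S\cup Y)+2\zeta$, after absorbing the extra $+2$ into $\zeta$ if necessary.

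The main obstacle is Step 1 at non-regular points. The geodesic flow in an Alexandrov space is only defined almost everywhere, and the curves $b_s$ need only be local (not global) shortest paths, so we must restrict to subsegments short enough to be minimizing before invoking comparison. I would handle this through the a.e.\ existence result of \cite{zbMATH07328107} together with a measurable choice of subdivision.
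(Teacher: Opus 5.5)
Your proposal is correct and is essentially the paper's argument. It applies Ohta's semiconcavity estimate for $\frac12\d^2(\cdot,y)$ on short subsegments where the geodesic-flow curve is minimizing, uses that semiconcavity survives the infimum over $y\in Y$ defining the $c$-concave potential, and then invokes the local-to-global principle for concavity on an interval. Your handling of the derivative bound (absorbing into $\zeta$ the extra $+2$ that arises because $b_s$ may leave $S$) makes explicit a point the paper glosses over, and no measurable subdivision is needed, since the claim is pointwise in each good $(x,v)$.
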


	Applying Lemma \ref{l0.18} below to the functions $\zeta|s|^2-u_\mu^{(x,v)}$ and $\zeta|s|^2-u_\nu^{(x,v)}$ on each compact segment $[\alpha_i(x,v),\beta_i(x,v)]$ and using H\"{o}lder inequality, we obtain
	\begin{equation}\label{0.55}
		\begin{aligned}
			&\int_S |\nabla\phi_\mu(x)-\nabla\phi_\nu(x)|^2\,\d\rho(x)\\
			\mathop{\leq}^{\eqref{0.54}}& \frac{a_2}{c_nd_n}\int_{{\rS}X}\sum_{i\in  I_S(x,v)}\int_{\alpha_i(x,v)}^{\beta_i(x,v)}\langle\nabla\phi_\mu(b_s(x,v))-\nabla\phi_\nu(b_s(x,v)),t_s(x,v)\rangle^2\,\d s \,\d\mm_\rS(x,v)\\
			\leq&C_1 \int_{{\rS}X}\sum_{i\in  I_S(x,v)}\left(\int_{\alpha_i(x,v)}^{\beta_i(x,v)}|\phi_\mu(b_s(x,v))-\phi_\nu(b_s(x,v))|^2\,\d s\right)^\frac{1}{3}\,\d\mm_\rS(x,v)\\
			\leq&C_1 \int_{{\rS}X}\left(\# I_S\right)^\frac{2}{3}\left(\int_0^1|\phi_\mu(b_s)-\phi_\nu(b_s)|^2\chi_S(b_s)\,\d s\right)^\frac{1}{3}\d\mm_\rS\\
			\leq&C_1 \left(\int_{{\rS}X}\# I_S\,\d\mm_\rS\right)^\frac{2}{3}\left(\int_{{\rS}X}\int_0^1|\phi_\mu(b_s)-\phi_\nu(b_s)|^2\chi_S(b_s)\,\d s \,\d\mm_\rS\right)^\frac{1}{3}\\
			\leq&C_1 \left(\frac{c_n}{a_1}\right)^\frac{1}{3}\left(\int_{{\rS}X}\# I_S\,\d\mm_\rS\right)^\frac{2}{3}\left(\int_S|\phi_\mu-\phi_\nu|^2\,\d\rho\right)^\frac{1}{3},
		\end{aligned}
	\end{equation}
	where $C_1$ depends on $k,n,a_2, \diam(S\cup Y)$, and the last inequality follows from the invariance of $\mm_\rS$ under the geodesic flow.
	
	\begin{lemma}[\cite{zbMATH07794624}, Lemma 5.1]\label{l0.18}
		Let $I\subseteq \R$ be a compact segment and let $u,v: I\rightarrow\R$ be two convex functions such that $|u'|$ and $|v'|$ (defined a.e. on $I$) are uniformly bounded on $I$. Then
		\begin{equation*}
			\|u'-v'\|_{L^2(I)}^2\leq 8 (\|u'\|_{L^\infty(I)}+\|v'\|_{L^\infty(I)})^\frac{4}{3}\|u-v\|_{L^2(I)}^\frac{2}{3}.
		\end{equation*}
	\end{lemma}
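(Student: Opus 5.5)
This is a one-dimensional statement, so the plan is to prove it by hand on $I=[a,b]$. Set $w=u-v$, $M=\|u'\|_{L^\infty(I)}+\|v'\|_{L^\infty(I)}$, $\varepsilon=\|w\|_{L^2(I)}$, and $h$ a scale to be chosen later.

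The key structural fact comes from convexity. The derivatives $u'$ and $v'$ are nondecreasing, so $w'=u'-v'$ is a difference of two monotone functions, each bounded by $M$ in absolute value. We write
\[
\|w'\|_{L^2}^2=\int_I w'(u'-v')\,\d s .
\]
We want to integrate by parts against the monotone factors. Directly this produces boundary terms $w u'|_a^b$ together with $-\int w\,\d(u'')$. The measures $u''$ and $v''$ are nonnegative, with total mass at most $2M$ each. So the naive bound is $\|w'\|_2^2\le 2M\|w\|_\infty+4M\|w\|_\infty$. This loses, because we only control $\|w\|_{L^2}$, not $\|w\|_{L^\infty}$.

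To fix this, we mollify at scale $h$: replace $w$ by local averages $w_h(s)=\frac1h\int_s^{s+h}w$, using one-sided averages near the endpoints so as to stay inside $I$. By Cauchy--Schwarz, $\|w_h\|_\infty\le h^{-1/2}\varepsilon$. Also $|w-w_h|\le Mh$ pointwise, since $|w'|\le M$.

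With this in hand, we split
\[
\int w'w'=\int w'_h w' + \int (w-w_h)'w' .
\]

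The first term is handled by integrating by parts against the monotone pieces. This gives a bound of order $M\|w_h\|_\infty\lesssim M h^{-1/2}\varepsilon$, and it works because $\d(u'-v')$ has total variation at most $4M$.

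The second term is the main obstacle. A crude estimate gives $\int |(w-w_h)'||w'|\le 2M\cdot\int|w'-w'_h|$. The natural tool is the bound $\int_I|f-f_h|\le h\,\mathrm{TV}(f)$ for $f=w'$, which would give $\lesssim M\cdot h\cdot 4M=4M^2h$. Combining the two terms,
\[
\|w'\|_2^2\lesssim M h^{-1/2}\varepsilon + M^2 h .
\]
Optimizing in $h$ gives $h=(\varepsilon/M)^{2/3}$, and hence $\|w'\|_2^2\lesssim M^{4/3}\varepsilon^{2/3}$, which is exactly the claimed exponent.

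The step I expect to be delicate is justifying the second-term estimate cleanly. First, $(w-w_h)'=w'-w'_h$, and here $w'_h$ is a difference quotient of $w$, i.e.\ an average of $w'$; so I would verify this identity directly. Second, the inequality $\int|f-f_h|\le h\,\mathrm{TV}(f)$ has to survive near the endpoints with the one-sided averages. I would handle the endpoints by treating a window of length $h$ separately, where the contribution is bounded trivially by $4M^2h$.

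If $h>|I|$, the estimate follows from $\|w'\|_2^2\le 4M^2|I|$ together with $\|w\|_2\ge$ the appropriate bound; one can also avoid this case by shrinking the averaging scale to $|I|$. Tracking the numerical constants through these steps should yield a constant of the order of $8$.
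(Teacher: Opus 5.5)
Your argument is correct, but it takes a detour around a short proof. The paper gives no proof of its own; it quotes Delalande--M\'erigot. The natural argument, whose constant comes out as exactly $8=4\cdot 2$, keeps the bound you discarded as lossy.

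Set $w=u-v$ and $M=\|u'\|_{L^\infty(I)}+\|v'\|_{L^\infty(I)}$. Integration by parts gives $\|w'\|_{L^2(I)}^2\le 4M\|w\|_{L^\infty(I)}$: the boundary terms contribute at most $2M\|w\|_\infty$, and the interior term at most $\|w\|_\infty|\D w'|(I)\le 2M\|w\|_\infty$, since each monotone $u'$ varies by at most $2\|u'\|_\infty$. Nothing is lost, because $w$ is $M$-Lipschitz. Let $A=\|w\|_{L^\infty}=|w(s_0)|$; then $|w|\ge A/2$ on $I\cap[s_0-\frac{A}{2M},s_0+\frac{A}{2M}]$. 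Two cases arise:
\begin{itemize}
\item If $|I|\ge \frac{A}{2M}$, then $\|w\|_{L^2}^2\ge A^3/(8M)$, i.e.\ $A\le 2M^{1/3}\|w\|_{L^2}^{2/3}$, and the claim follows with constant exactly $4\cdot 2$.
\item If $|I|<\frac{A}{2M}$, then $\|w\|_{L^2}^2\ge \frac{A^2}{4}|I|>M^2|I|^3$, and the trivial bound $\|w'\|_{L^2}^2\le M^2|I|<M^{4/3}\|w\|_{L^2}^{2/3}$ suffices.
\end{itemize}

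Your mollification is a smoothed version of this same $L^\infty$--$L^2$ interpolation. The estimate $\|w_h\|_\infty\le h^{-1/2}\varepsilon$ replaces the pointwise Lipschitz argument. The price is paid on the derivative side through $\int|f-f_h|\le h\,\mathrm{TV}(f)$ with $f=w'$. The requirement $h\le|I|$ replaces the case split on $|I|$. In one dimension this buys nothing beyond not having to locate a maximum point, and it costs endpoint bookkeeping, but it is sound.

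One caveat concerns the constant. With the intermediate bounds you wrote ($|w'|\le 2M$, total variation $4M$, endpoint window $4M^2h$), the optimisation in $h$ gives only about $14$, not $8$. Use the sharp bounds instead, $|w'|\le M$ and $|\D w'|(I)\le 2M$. Also freeze $w_h$ to the constant $\frac1h\int_{b-h}^b w$ on $[b-h,b]$, so that it stays Lipschitz and the integration by parts against $\D w'$ is clean. Then $\|w'\|_{L^2}^2\le 4Mh^{-1/2}\varepsilon+3M^2h$, and $h=(\varepsilon/M)^{2/3}$ gives $7M^{4/3}\varepsilon^{2/3}$. If this $h$ exceeds $|I|$, you have $\|w'\|_{L^2}^2\le M^2|I|<M^{4/3}\varepsilon^{2/3}$ directly, so the stated constant $8$ is indeed reached.
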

	\paragraph{One-dimensional BV estimate.}

	We now aim to control $\int_{{\rS}X}\# I_S\,\d\mm_\rS$,  the average number of times a geodesic crosses the boundary of $S$. The argument of    \cite[Proposition 4.3]{arXiv:2504.05412} relies  on the existence of $T >0$, so that for $(x,v)\in {\rS}X$, the geodesic $s \mapsto b_s(x,v)$ is minimizing on $[0,T]$, and hence in particular does not self-intersect.  In an Alexandrov space, one cannot in general expect such a uniform injectivity radius bound. We  use a localization method to  overcome this difficulty.

	In the following proposition,  we denote by $|\D f|([0,1])$ the total variation of a function $f$ on $[0,1]$ and denote by ${\rm{Per}}(S)$ the perimeter of a set $S\subset X$.   We refer to \cite{zbMATH05152934, zbMATH06320681} for $\rm{BV}$ functions and sets of finite perimeter in metric measure spaces.

	\begin{proposition}\label{prop3.5}
		Assume ${\rm{Per}}(S)<+\infty$, then 
		\begin{equation*}
			\int_{{\rS}X}\# I_S(x,v)\,\d\mm_\rS(x,v)\leq c_n\big(\mathcal{H}^n(S_1)+{\rm{Per}}(S)\big).
		\end{equation*}
	\end{proposition}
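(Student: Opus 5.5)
We plan to bound $\# I_S(x,v)$ by the total variation of the one-dimensional function $s\mapsto \chi_S(b_s(x,v))$ on $[0,1]$, then integrate over ${\rS}X$ and relate the result to ${\rm Per}(S)$. Each connected component $(\alpha_i,\beta_i)$ of $\{s: b_s(x,v)\in S\}$ either contributes at least one jump of $\chi_S\circ b_\cdot$ in the open interval $(0,1)$, or is all of $[0,1]$ (or of the form $[0,\beta_i)$, $(\alpha_i,1]$). The pointwise bound we aim for is therefore
\begin{equation*}
\# I_S(x,v)\leq |\D (\chi_S\circ b_\cdot(x,v))|([0,1]) + \chi_{S_1}(x),
\end{equation*}
where the correction term covers a component with no interior jump. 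This happens only when $b_s(x,v)\in S$ for some $s$, and then $\d(x,S)\le 1$, so $x\in S_1$ because the geodesic has unit speed. Integrating the correction term with \eqref{liouville} gives $c_n\mathcal H^n(S_1)$.

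For the variation term we localize, since there is no uniform injectivity radius. We first replace $\chi_S$ by Lipschitz approximations $f_j\to\chi_S$ in $L^1(\mathcal H^n\restr{S_1})$ with $\int|\nabla f_j|\,\d\mathcal H^n\to{\rm Per}(S)$, which is possible by the definition of perimeter on metric measure spaces. For Lipschitz $f$, the variation along a curve is controlled by $\int_0^1|\langle\nabla f(b_s),t_s\rangle|\,\d s\le\int_0^1|\nabla f|(b_s)\,\d s$, using the slope along the locally minimizing segment. No global minimality is needed, so this is exactly where the localization enters: the estimate is local in $s$ and holds on each small subsegment where $b_s$ is minimizing. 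Then, by invariance of $\mm_\rS$ under the geodesic flow,
\begin{equation*}
\int_{{\rS}X}\int_0^1|\nabla f_j|(b_s(x,v))\,\d s\,\d\mm_\rS=\int_{{\rS}X}|\nabla f_j|(x)\,\d\mm_\rS=c_n\int_X|\nabla f_j|\,\d\mathcal H^n.
\end{equation*}
Better still, averaging $|\langle\nabla f,v\rangle|$ over $\Sigma_x$ gives a constant at most $1$, and we keep the coarse bound.

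To pass to the limit, invariance of $\mm_\rS$ and Fubini show that $f_j\circ b_\cdot(x,v)\to\chi_S\circ b_\cdot(x,v)$ in $L^1([0,1])$ for $\mm_\rS$-a.e. $(x,v)$ along a subsequence. Lower semicontinuity of one-dimensional total variation, followed by Fatou's lemma, then gives
\begin{equation*}
\int_{{\rS}X}|\D(\chi_S\circ b_\cdot)|([0,1])\,\d\mm_\rS\le c_n{\rm Per}(S).
\end{equation*}

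The main obstacle is the last step. Lower semicontinuity controls the variation of an $L^1$ representative, whereas $\# I_S$ counts components of the genuine open set $\{s: b_s\in S\}$. Since $S$ is open, the jumps of the precise function are at least as many as those of the a.e. representative, so we must rule out boundary touching on null $s$-sets. We would handle this through the finite-perimeter assumption, which gives $\mathcal H^n(\partial S\setminus\partial^*S)$-type negligibility, or otherwise by counting only essential components. We would also check that the $\mm_\rS$-a.e. local minimality from \cite{zbMATH07328107} suffices to justify the chain rule along $b_s$.
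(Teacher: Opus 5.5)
\textbf{There is a genuine gap, and the statement as written is false.}

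Your outline follows the paper's proof step for step. You use Lipschitz approximants $u_k\to\chi_S$ with $\int|\nabla u_k|\,\d\mathcal{H}^n\to{\rm Per}(S)$, the bound $|\D(u\circ b_\cdot)|([0,1])\le\int_0^1|\nabla u|(b_s)\,\d s$, invariance of $\mm_\rS$, and a.e.\ convergence in $L^1([0,1])$ along a subsequence. You then apply lower semicontinuity plus Fatou and finish with a pointwise count; your $\#I_S\le|\D(\chi_S\circ b_\cdot)|([0,1])+\chi_{S_1}(x)$ is a cruder form of \eqref{36}.

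The obstacle you flag at the end is real, and your diagnosis is correct. The paper does not close it either. Its \eqref{3.10} comes from lower semicontinuity under $L^1$ convergence, so it controls only the essential variation of $s\mapsto\chi_S(b_s)$. By contrast, \eqref{36} counts components of the genuine open set $\{s:b_s\in S\}$, and that requires the pointwise variation. The two differ whenever geodesics meet $\partial S$ on an $s$-null set of times with $S$ on both sides.

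Your first remedy cannot rule this out. ${\rm Per}(S)$ sees $\chi_S$ only up to $\mathcal{H}^n$-null sets, so finite perimeter gives no control of $\partial S\setminus\partial^*S$. Your chain-rule worry, on the other hand, is harmless. Only the $1$-Lipschitz property of $s\mapsto b_s$ is used, which gives $|(u\circ b_\cdot)'|\le{\rm lip}\,u(b_s)$. The identity ${\rm lip}\,u=|\nabla u|$ holds $\mathcal{H}^n$-a.e., and this transfers to a.e.\ $s$ along $\mm_\rS$-a.e.\ geodesic by invariance.

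\textbf{Counterexample.} Take $X=\R^2$, so $n=2$, $c_2=2\pi$, $\mm_\rS=\d x\,\d\theta$ and $b_s=x+sv$. Let $S$ be the open unit disc minus $m$ closed slits $[-\frac12,\frac12]\times\{y_j\}$ with distinct $y_j\in(-\frac12,\frac12)$.
\begin{itemize}
\item $S$ is a John domain, with constant depending on $m$.
\item $\chi_S$ equals the indicator of the disc a.e., so ${\rm Per}(S)=2\pi$.
\item $S_1=\bar B(0,2)$, so the right-hand side equals $12\pi^2<119$.
\end{itemize}
A transversal crossing of a slit at some time in $(0,1)$ lies outside $\{s:b_s\in S\}$ but has points of that set on both sides. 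Hence $\#I_S$ is at least the number of crossings. For a fixed direction $\theta$, the base points whose unit segment crosses a given slit form a parallelogram of area $|\sin\theta|$. So each slit contributes $\int_0^{2\pi}|\sin\theta|\,\d\theta=4$, and the left-hand side is at least $4m$. This exceeds the bound for $m\ge 30$ and is unbounded as $m\to\infty$.

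\textbf{The correct repair} is your second suggestion, counting essential components. Let $\tilde f$ be a good (e.g.\ right-continuous) representative of $\chi_S\circ b_\cdot$, and let $I_S(x,v)$ be the set of maximal intervals $J_l$ of $\{\tilde f=1\}$. Then $\chi_{\bigcup_l J_l}=\chi_S\circ b_\cdot$ a.e.\ on $[0,1]$, and
\[
\#I_S\le\chi_{S_1}(x)\Big(1+\tfrac12|\D(\chi_S\circ b_\cdot)|([0,1])\Big)
\]
with the essential variation. Your argument then proves the proposition for this count.

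This is also all that \eqref{0.55} needs. By Lemma \ref{l0.17}, $\zeta|s|^2-u_\mu^{(x,v)}$ and $\zeta|s|^2-u_\nu^{(x,v)}$ are convex on all of $[0,1]$, so Lemma \ref{l0.18} applies on each $\bar J_l$. Moreover, the $\d s$-integrals over $\{s:b_s\in S\}$ and over $\bigcup_l J_l$ coincide.
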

	
	\begin{proof}
 Let  $(x,v)\in {\rS}X$ be such that  $s \mapsto b_s(x,v)$ is well-defined on $[0,1]$, and let  $u\in \Lip(X,\d)$. 
		Note that $s \mapsto b_s(x,v)$ has unit speed, then 
		\begin{equation*}
			\left|\frac{\d}{\d s}u(b_s(x,v))\right|\leq \left|\nabla u\right|(b_s(x,v)),~~~\text{a.e. $s\in [0,1]$}, 
		\end{equation*} and
		\begin{equation}\label{3.7}
		|\D u (b_s(x,v))|([0,1])=\int_0^1 	\left|\frac{\d}{\d s}u(b_s(x,v))\right|\,\d s\leq  \int_0^1 |\nabla u|(b_s(x,v))\,\d s.
			\end{equation}
		
Let $(u_k)_{k\in \N}\subseteq \Lip(X,\d)$ be such that $u_k\rightarrow \chi_S$ in $L^1(\mathcal H^n)$ and
\begin{equation}\label{Per(S)}
			 \int_X |\nabla u_k|\,\d \mathcal{H}^n \rightarrow  {\rm {Per}}(S).
			\end{equation}Then
		\begin{equation}
			\begin{aligned}
			 &\int_{{\rS}X} \int_0^1\left|u_k(b_s(x,v))-\chi_S(b_s(x,v))\right|\,\d s \,\d\mm_\rS(x,v)\\
			 =& \int_0^1\int_{{\rS}X}\left|u_k(b_s(x,v))-\chi_S(b_s(x,v))\right|\,\d\mm_\rS(x,v)\,\d s\\
			{=}&c_n \int_X |u_k(x)-\chi_S(x)|\,\d\mathcal{H}^n(x)\rightarrow 0.
			\end{aligned}
		\end{equation}
	So up to taking a subsequence,   it holds that for $\mm_\rS$-a.e. $(x,v)\in {\rS}X$,
	\begin{equation}
		u_k(b_s(x,v))\rightarrow \chi_S(b_s(x,v)) \quad\text{in $L^1([0,1])$ as $k\to \infty$}.
	\end{equation}

By  lower semicontinuity of the total variation, we have
		\begin{equation}\label{3.10}
		\begin{aligned}
		&\int_{{\rS}X}|\D\chi_S(b_s(x,v))|([0,1])\,\d\mm_\rS(x,v)\\
		\leq &\int_{{\rS}X}\limi_{k\rightarrow\infty}|\D u_k(b_s(x,v))|([0,1])\,\d\mm_\rS(x,v)\\
	\mathop{\leq}^{\text{Fatou}} &\limi_{k\rightarrow\infty}\int_{{\rS}X}|\D u_k(b_s(x,v))|([0,1])\,\d\mm_\rS(x,v)\\
		\mathop{\leq}^* &\limi_{k\rightarrow\infty}c_n \int_X |\nabla u_k|\,\d \mathcal{H}^n \mathop{=}^{\eqref{Per(S)}} c_n{\rm{Per}}(S),
		\end{aligned}
		\end{equation}
			where $(*)$ follows from \eqref{3.7} and the invariance of $\mm_\rS$ under the geodesic flow.
			\medskip
			
	Finally,  notice that
		\begin{equation}\label{36}
		\# I_S(x,v)\leq 1+\frac{1}{2}|\D\chi_S(b_s(x,v))|([0,1])
		\end{equation} and that only the elements $(x,v)\in {\rS}X$ for which $x\in S_1$ have a non-vanishing contribution.  Combining with \eqref{3.10},  we obtain 
	\begin{equation}
		\begin{aligned}
			\int_{{\rS}X}\# I_S(x,v)\,\d\mm_\rS(x,v)
			\leq & c_n\mathcal{H}^n(S_1)+\frac{1}{2}c_n {\rm{Per}}(S).
		\end{aligned}
	\end{equation}
	\end{proof}
	
		\paragraph{Proof of the theorems.}
	\begin{proof}[Proof of Theorem \ref{thm2}]
		By \eqref{0.55} and Proposition \ref{prop3.5}, we obtain
		\begin{equation}
			\begin{aligned}
				&\int_S |\nabla\phi_\mu-\nabla\phi_\nu|^2\,\d\rho\\
				\leq&C_1 \left(\frac{c_n}{a_1}\right)^\frac{1}{3}\left(\int_{{\rS}X}\# I_S\,\d\mm_\rS\right)^\frac{2}{3}\left(\int_S|\phi_\mu-\phi_\nu|^2\,\d\rho\right)^\frac{1}{3}\\
				\leq&	 \bar{C}\left(\int_S|\phi_\mu-\phi_\nu|^2\,\d\rho\right)^\frac{1}{3},
			\end{aligned}
		\end{equation}
		where $\bar{C}$ depends on $k,n,a_1, a_2, \diam(S\cup Y), {\rm{Per}}(S), S$.
	\end{proof}

		\begin{proof}[Proof of Theorem \ref{thm0.2}]
		By Perelman's doubling theorem (or Petrunin's gluing theorem \cite{zbMATH01104278}),  we may assume that $(X,  \d)$ has no boundary.  
		
		Let $\phi_\mu$ and $\phi_\nu$ be the Kantorovich potentials from $\rho$ to $\mu$ and $\rho$ to $\nu$ respectively.   From \cite{zbMATH05349267,zbMATH06032507} we know that  $\nabla \phi_\mu(x),  \nabla \phi_\nu(x) \in  {\rm{T}}_xX$  and
		$T_\mu(x)= \exp_x(-\nabla \phi_\mu(x))$,  $T_\nu(x)= \exp_x(-\nabla \phi_\nu(x))$  are well-defined for almost every $x\in S$.   By triangle comparison condition (cf. \cite{zbMATH05342782}),  it holds
		\begin{equation}\label{exp}
			\d \big(T_\mu(x),   T_\nu(x) \big) \leq c|\nabla \phi_\mu(x)-\nabla \phi_\nu(x)|,
		\end{equation}
		for some constant $c>0$ which depends on $k, \diam(S\cup Y)$.  
		\medskip

Since $\phi_\mu,\phi_\nu$ are $\diam(S\cup Y)$-Lipschitz and  $\E_\rho(\phi_\mu)=\E_\rho(\phi_\nu)=0$, we have  
	\begin{equation*}
		\|\phi_\mu-\phi_\nu\|_{L^\infty(\rho)}\leq \|\phi_\mu\|_{L^\infty(\rho)}+ \|\phi_\nu\|_{L^\infty(\rho)}\leq {\rm{osc}}(\phi_\mu)+{\rm{osc}}(\phi_\nu)\leq 2\big(\diam(S\cup Y)\big)^2.
	\end{equation*}
Recall that a finite dimensional Alexandrov space is also  $\rm{RCD}$ (cf.  \cite{zbMATH06032507,zbMATH05962558}).    By Theorem \ref{thm0.1},   we  get
	\begin{equation}\label{sta}
		\|\phi_\mu-\phi_\nu\|^2_{L^2(\rho)} \leq \|\phi_\mu-\phi_\nu\|_{L^\infty(\rho)}\|\phi_\mu-\phi_\nu\|_{L^1(\rho)} \leq C_3 W_1^\frac{1}{2}(\mu,\nu).
	\end{equation}

		 Combining  Theorem \ref{thm2},   \eqref{exp} and  \eqref{sta}, we obtain
		\begin{equation}
			\begin{aligned}
				\int_S \d^2(T_\mu(x), T_\nu(x))\,\d \rho(x)	\leq C W_1^\frac{1}{6}(\mu,\nu),
			\end{aligned}
		\end{equation}
		where $C$ depends on $k,n,a_1, a_2, \diam(S\cup Y), {\rm{Per}}(S), S$.  This complete the proof.
	\end{proof}

	\begin{appendices}
	\section{Poincar\'e inequality: local to global}\label{A}
		\begin{lemma}\label{p3.10}
	Let $x_0\in S$, $0<r_0\leq1$ and  $B=B(x_0,r_0)\subseteq S$,  $\rho_B=\frac{\rho\restr{B}}{\rho(B)}$.  Then the following strong local $(1,1)$-Poincar\'e inequality holds for $f\in \Lip(B,  \d)$:
			\begin{equation}
			\int_{B}|f(x)-\E_{\rho_B}(f)|\,\d\rho_B(x)\leq C_3 r_0\int_B |\nabla f|\,\d\rho_B,
		\end{equation}
		where $C_3$ depends on $K,N,a_1,a_2$.
	\end{lemma}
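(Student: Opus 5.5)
The plan is to reduce Lemma \ref{p3.10} to the standard weak local $(1,1)$-Poincar\'e inequality on ${\rm RCD}(K,N)$ spaces and then upgrade it to a strong one on the ball itself. Two features of the setting make this feasible. First, $\rho_B$ is comparable to the normalized restriction $\mm_B:=\mm\restr{B}/\mm(B)$, since $a_1\mm\restr{S}\le\rho\le a_2\mm\restr{S}$ and $B\subseteq S$ give
\[
\frac{a_1}{a_2}\,\mm_B\le\rho_B\le\frac{a_2}{a_1}\,\mm_B .
\]
Second, the quantity $\int_B|f-c|\,\d\rho_B$ is, up to a factor $2$, minimized over constants $c$. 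Combining these,
\[
\int_B|f-\E_{\rho_B}f|\,\d\rho_B\le 2\inf_c\int_B|f-c|\,\d\rho_B\le \frac{2a_2}{a_1}\int_B|f-\E_{\mm_B}f|\,\d\mm_B ,
\]
and similarly $\int_B|\nabla f|\,\d\mm_B\le\frac{a_2}{a_1}\int_B|\nabla f|\,\d\rho_B$. So it suffices to prove the inequality for the measure $\mm_B$, with a constant depending only on $K,N$ (using $r_0\le1$).

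For $\mm_B$, I would invoke the local $(1,1)$-Poincar\'e inequality valid on ${\rm CD}(K,N)$ spaces (Rajala). It reads
\[
\int_{B(x,r)}|f-f_{B(x,r)}|\,\d\mm\le C(K,N,R)\,r\int_{B(x,r)}|\nabla f|\,\d\mm, \qquad r\le R .
\]
Rajala's argument uses only geodesics contained in the ball, because the space is essentially nonbranching and midpoint densities are controlled. This makes the inequality strong, i.e.\ with the same ball on both sides, so no enlarged ball $\lambda B$ is needed. If one prefers to rely only on the weak form, with $B(x_0,\lambda r_0)$ on the right-hand side, the fallback is the standard upgrade: a measure-doubling geodesic space is a length space in which balls are John domains with uniform constants, so the Haj\l asz--Koskela chaining argument converts the weak inequality into the strong one on $B$. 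Doubling with constant depending on $K,N$ at scales $r\le1$ follows from Bishop--Gromov \cite{zbMATH05049052}.

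The main obstacle is this strong-versus-weak issue, since $f$ is only assumed Lipschitz on $B$ and the right-hand side may involve only $|\nabla f|$ on $B$. I would settle it by citing Rajala's version directly, which is formulated on the ball for ${\rm CD}(K,N)$. As an additional safeguard, I would note that the minimal weak upper gradient of $f\in\Lip(B,\d)$ is local, so $|\nabla f|$ on the open set $B$ is intrinsically defined. The constant dependence on $K,N$ only, and not on $x_0$, comes from the uniformity of Bishop--Gromov at scales $r_0\le 1$.
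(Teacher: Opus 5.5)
Your proposal is correct and follows essentially the same route as the paper. Both arguments take the strong local $(1,1)$-Poincar\'e inequality for $\mm$ on ${\rm RCD}(K,N)$ spaces from \cite[Remark 3.3]{zbMATH06043352} together with the Haj\l asz--Koskela weak-to-strong upgrade in \cite[Chapter 9]{zbMATH01474795}, and transfer it to $\rho$ using $a_1\mm\restr{S}\le\rho\le a_2\mm\restr{S}$ and the factor-$2$ quasi-minimality of the mean.

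One small correction: your reason for reading Rajala's inequality as directly strong, namely that it only uses geodesics inside the ball, is wrong in general. Geodesics between points of $B(x,r)$ are only guaranteed to stay in $B(x,2r)$. So it is your ``fallback'' chaining upgrade, through balls being John domains in geodesic doubling spaces, that does the work, as in the paper's citation.
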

	
	\begin{proof}
		By \cite[Chapter 9]{zbMATH01474795} and \cite[Remark 3.3]{zbMATH06043352},  $\mm$ satisfies the strong local  $(1,1)$-Poincar\'e inequality:
		\begin{equation}
			\int_{B} |f-\E_{\mm_B}(f)|\,\d\mm\leq c_1 r_0 \int_B |\nabla f|\,\d\mm,
		\end{equation}
		where $c_1$ depends on $K,N$, $\mm_B=\frac{\mm\restr{B}}{\mm(B)}$. Since $a_1\mm\restr{S} \leq \rho\leq a_2\mm\restr{S}$, we have 
		\begin{equation}
			\begin{aligned}
				\int_{B} |f-\E_{\rho_B}(f)|\,\d\rho\leq &\int_{B} |f-\E_{\mm_B}(f)|\,\d\rho+\int_{B} |\E_{\mm_B}(f)-\E_{\rho_B}(f)|\,\d\rho\\
				\leq & 2a_2\int_{B} |f-\E_{\mm_B}(f)|\,\d\mm\\
				\leq &2a_2 c_1 r_0 \int_B |\nabla f|\,\d\mm
				\leq  2\frac{a_2}{a_1}c_1 r_0 \int_B |\nabla f|\,\d\rho,\\
			\end{aligned}
		\end{equation}
	which is the thesis.
	\end{proof}
	
	Next we prove an $L^1$-variant of the gluing Lemma in \cite[Lemma 3.3]{arXiv:2411.04908},   following the same Boman-chain decomposition,  utilizing the doubling property directly instead of the maximal function estimates. 
	
		\begin{definition}[Boman chain condition]\label{Boman chain condition}
		We say that a probability measure $\rho$ on an open set $S$ of a metric space satisfies  \emph{Boman chain condition} with parameters \(E, F,G>1\) if there is a covering \(\mathcal{F}\) of $S$ by open balls  $B \in \mathcal{F}$ such that:
	
		\begin{enumerate}
				\item For any \(x \in S\),
				\begin{equation*}
						\sum_{B \in \mathcal{F}} \chi_{2B}(x) \leq E \chi_{S}(x).
					\end{equation*}
				\item For some fixed ball \(B_0\) in \(\mathcal{F}\), called the \emph{central ball}, and for every \(B \in \mathcal{F}\), there exists a chain \(B_0, B_1, \ldots, B_N = B\) of distinct balls from \(\mathcal{F}\) such that
				\begin{equation*}
					B \subset F B_j,~~~\forall j \in \{0, \ldots, N-1\}.
					\end{equation*}
			\item Consecutive balls of the above chain overlap quantitatively:
				\begin{equation*}
						\rho(B_j \cap B_{j+1}) \geq G^{-1} \max(\rho(B_j), \rho(B_{j+1})),~~~\forall j \in \{0, \ldots, N-1\}.
					\end{equation*}
			\end{enumerate}
	\end{definition}
	
	\begin{lemma}\label{3.1}
	For any $f\in L^1(\rho)$,   it holds that
	\begin{equation}\label{A.5}
		\int_S |f(x)-\E_{\rho}(f)|\,\d\rho(x)\leq C_4 \sum_{B\in\mathcal{F}}\rho(B)\int_B |f(x)-\E_{\rho_B}(f)|\,\d\rho_B(x),
	\end{equation}
	where $C_4$ depends on $K,N,\diam(S),S$.
	\end{lemma}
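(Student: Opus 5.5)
We use the Boman chain covering $\mathcal{F}$ from Definition~\ref{Boman chain condition}, with central ball $B_0$, and telescope along chains. First, since $\E_\rho(f)$ minimizes $c\mapsto\int_S|f-c|\,\d\rho$ only up to a factor $2$, we have
\[
\int_S |f-\E_\rho(f)|\,\d\rho\le 2\int_S|f-c|\,\d\rho
\]
for any constant $c$. We take $c=\E_{\rho_{B_0}}(f)$ and write $f_B:=\E_{\rho_B}(f)$. Since $\mathcal{F}$ covers $S$,
\[
\int_S|f-f_{B_0}|\,\d\rho\le \sum_{B\in\mathcal{F}}\int_B|f-f_B|\,\d\rho+\sum_{B\in\mathcal{F}}\rho(B)\,|f_B-f_{B_0}|.
\]
The first sum is already of the required form $\sum_B\rho(B)\int_B|f-f_B|\,\d\rho_B$. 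So everything reduces to bounding the second sum.

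For each $B\in\mathcal{F}$ we take the chain $B_0,\dots,B_N=B$ and telescope: $|f_B-f_{B_0}|\le\sum_j|f_{B_j}-f_{B_{j+1}}|$. By the overlap condition (3),
\[
|f_{B_j}-f_{B_{j+1}}|\le \frac{1}{\rho(B_j\cap B_{j+1})}\int_{B_j\cap B_{j+1}}\big(|f-f_{B_j}|+|f-f_{B_{j+1}}|\big)\,\d\rho\le G\,(a_j+a_{j+1}),
\]
where $a_j:=\int_{B_j}|f-f_{B_j}|\,\d\rho_{B_j}$. Hence
\[
\sum_B\rho(B)|f_B-f_{B_0}|\le 2G\sum_{A\in\mathcal{F}}a_A\sum_{B:\,A\in \mathrm{chain}(B)}\rho(B).
\]
Now fix $A$. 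Whenever $A$ lies in the chain of $B$ (and $A\neq B$), condition (2) gives $B\subset FA$. Moreover the balls of $\mathcal{F}$ have overlap at most $E$ by condition (1). Therefore
\[
\sum_{B:\,A\in\mathrm{chain}(B)}\rho(B)\le E\,\rho(FA).
\]
Finally, $\rho$ is doubling on $S$, as noted in the proof of Proposition~\ref{2.11} via Bishop--Gromov and $a_1\mm\restr{S}\le\rho\le a_2\mm\restr{S}$. This should give $\rho(FA)\le C(K,N,F,\diam S,a_1,a_2)\,\rho(A)$. Combining the two displays then yields \eqref{A.5}.

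The delicate step is the doubling bound $\rho(FA)\lesssim\rho(A)$. Here $FA$ may leave $S$ while $\rho$ lives only on $S$, so we need $\rho(FA\cap S)\le \rho(FA)$ together with a lower bound on $\rho(A)$. Since $A\subset S$ (or at least $A$ meets $S$ with comparable measure, by the John/Boman construction), we get
\[
\rho(A)\ge a_1\mm(A)\quad\text{and}\quad \rho(FA)\le a_2\mm(FA)\le a_2 C_{BG}(K,N,F,\diam S)\,\mm(A).
\]
This is where the dependence on $K,N,\diam(S)$ enters, through the Bishop--Gromov constant for radii at most $\max(1,\diam S)$. The remaining dependence on $S$ comes through $E,F,G$.
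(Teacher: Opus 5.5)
Your proposal is correct and follows the paper's proof essentially step for step: you reduce to the constant $\E_{\rho_{B_0}}(f)$ at the cost of a factor $2$, split over the covering, telescope along the Boman chains using the overlap condition, swap the sums using $B\subset F\bar B$, and then bound $\sum_{B\subset F\bar B}\rho(B)\le E\rho(F\bar B)$ and apply doubling. Your treatment of that last doubling step is slightly more careful than the paper's appeal to a doubling constant $\beta(K,N,\diam(S))$ for $\rho$: you combine $a_1\mm\restr{S}\le\rho\le a_2\mm\restr{S}$ with Bishop--Gromov on balls contained in $S$, which correctly shows that the constant also depends on $a_2/a_1$.
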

	
    \begin{proof} 
    	Since $S$ is a John domain, and $\rho$ is a doubling measure on $S$, by  \cite[Proposition 3.7]{arXiv:2504.05412}, $\rho$ satisfies the Boman chain condition. Hence there exists a covering $\mathcal{F}$ of $S$ satisfying Definition \ref{Boman chain condition}.
    	
    	For the central ball $B_0$, note that
    	\begin{equation}\label{A.2}
    		\begin{aligned}
    			\int_S |f(x)-\E_{\rho}(f)|\,\d\rho(x)
    			\leq & \int_S |f(x)-\E_{\rho_{B_0}}(f)|\,\d\rho(x)+\int_S |\E_{\rho_{B_0}}(f)-\E_{\rho}(f)|\,\d\rho(x)\\
    			\leq & 2\int_S |f(x)-\E_{\rho_{B_0}}(f)|\,\d\rho(x).
    		\end{aligned}
    	\end{equation}
    	
   For  $B\in\mathcal{F}$,  	denote $a_B=\int_B |f-\E_{\rho_{B}}(f)|\,\d\rho=\rho(B)\int_B |f-\E_{\rho_{B}}(f)|\,\d\rho_B$. Since $\mathcal{F}$ is a covering of $S$, we have
    	\begin{equation}\label{A.3}
    		\begin{aligned}
    			&\int_S |f(x)-\E_{\rho_{B_0}}(f)|\,\d\rho(x)
    			\leq  \sum_{B \in \mathcal{F}} \int_B |f(x)-\E_{\rho_{B_0}}(f)|\,\d\rho(x)\\
    			\leq & \sum_{B \in \mathcal{F}} \left(\int_B |f(x)-\E_{\rho_{B}}(f)|\,\d\rho(x)+\int_B |\E_{\rho_{B}}(f)-\E_{\rho_{B_0}}(f)|\,\d\rho(x)\right)\\
    			\leq &  \sum_{B \in \mathcal{F}} \left(a_B+\rho(B) |\E_{\rho_{B}}(f)-\E_{\rho_{B_0}}(f)|\right).
    		\end{aligned}
    	\end{equation}
    	
    	For any $B\in\mathcal{F}$, by Boman chain condition, there exists a chain $B_0, B_1,\dots, B_N=B$ of distinct balls from \(\mathcal{F}\), such that for any \(j \in \{0, \ldots, N-1\}\),
    	\begin{equation}
    		\begin{aligned}
    		|\E_{\rho_{B_j}}(f)-\E_{\rho_{B_{j+1}}}(f)|= &\left|\frac{1}{\rho(B_j\cap B_{j+1})}\int_{B_j\cap B_{j+1}}\left(\E_{\rho_{B_j}}(f)-\E_{\rho_{B_{j+1}}}(f)\right)\,\d\rho\right|\\
    		\leq &\frac{1}{\rho(B_j\cap B_{j+1})}\int_{B_j\cap B_{j+1}}\left|\E_{\rho_{B_j}}(f)-\E_{\rho_{B_{j+1}}}(f)\right|\,\d\rho\\
    		\leq &\frac{1}{\rho(B_j\cap B_{j+1})}\left(\int_{B_j\cap B_{j+1}}\left|f-\E_{\rho_{B_j}}(f)\right|+\left|f-\E_{\rho_{B_{j+1}}}(f)\right|\,\d\rho\right)\\
    		\leq & \frac{a_{B_j}+a_{B_{j+1}}}{\rho(B_j\cap B_{j+1})}
    		\overset{*}{\leq}  G \left(\frac{a_{B_j}}{\rho(B_j)}+\frac{a_{B_{j+1}}}{\rho(B_{j+1})}\right),
    		\end{aligned}
    	\end{equation}
    	where $(*)$ follows from the quantitative chain overlap of Boman chain condition $3$.
    	 
    	 Thus, we have
    	 \begin{equation}
    	 |\E_{\rho_{B}}(f)-\E_{\rho_{B_0}}(f)|\leq \sum_{j=0}^{N-1}|\E_{\rho_{B_j}}(f)-\E_{\rho_{B_{j+1}}}(f)|\leq 2G \sum_{j=0}^{N}\frac{a_{B_j}}{\rho(B_j)}\overset{*}{\leq} 2G \sum_{B\subset F \bar{B}}\frac{a_{\bar{B}}}{\rho(\bar{B})},
    	 \end{equation}
    	 where $ \sum_{B\subset F \bar{B}}$ means that the sum runs over all  $\bar{B}\in\mathcal{F}$ satisfying $B\subset F \bar{B}$, and $(*)$ follows from the Boman chain condition $2$.
    	 Then by Fubini--Tonelli theorem,  
    	 \begin{equation}\label{A.6}
    	 	\begin{aligned}
    	 	 	  \sum_{B \in \mathcal{F}} \rho(B) |\E_{\rho_{B}}(f)-\E_{\rho_{B_0}}(f)|
    	 	 	  \leq & 2G\sum_{B \in \mathcal{F}} \rho(B) \sum_{B\subset F \bar{B}}\frac{a_{\bar{B}}}{\rho(\bar{B})}
    	 	 	  \leq 2G\sum_{\bar{B} \in \mathcal{F}} \frac{a_{\bar{B}}}{\rho(\bar{B})}\sum_{B\subset F \bar{B}}\rho(B).
    	 	\end{aligned}
    	 \end{equation}
    	 
By Boman chain condition $1$ and  the doubling property of $\rho$, we have  
    	 \begin{equation}\label{aa}
       \sum_{B\subset F \bar{B}}\rho(B)\leq E\rho(F\bar{B})\leq E\beta^2F^\frac{\log \beta}{\log 2}\rho(\bar{B}),
    	 \end{equation}
where  $\beta=\beta(K,N,\diam(S))$ is the doubling constant.
    	 
    	 Combining \eqref{A.2}, \eqref{A.3},  \eqref{A.6} and \eqref{aa}, we obtain
    	 \begin{equation}\label{A_A}
    	 	\begin{aligned}
    	 		 \int_S |f(x)-\E_{\rho}(f)|\,\d\rho(x)\leq & 2 \sum_{B \in \mathcal{F}} \left(a_B+\rho(B) |\E_{\rho_{B}}(f)-\E_{\rho_{B_0}}(f)|\right)\\
    	 		 \leq &2(1+2\beta^2 EF^\frac{\log \beta}{\log 2}G) \sum_{B \in \mathcal{F}} \rho(B)\int_B |f-\E_{\rho_{B}}(f)|\,\d\rho_B,
    	 	\end{aligned}
    	 \end{equation}
  which is the thesis.
    \end{proof}

	\end{appendices}

	\addcontentsline{toc}{section}{References}
\def\cprime{$'$}

	\bigskip

\noindent {\bf Declaration.} The authors declare no conflict of interest and that the manuscript has no associated data.
	
	\medskip
	
	\noindent {\bf Fundings.} This work is supported by the Ministry of Science \& Technology of China (2021YFA1000900, 2021YFA1002200),  National Natural Science Foundation of China  (12201596)   and Shandong Provincial Natural Science Foundation (ZR2025QB05).

\end{document}